\def \dist {\mathrm{dist}}
\newtheorem{theorem}{Theorem}[section]
\newtheorem{lemma}[theorem]{Lemma}
\newtheorem{corollary}[theorem]{Corollary}
\theoremstyle{definition}
\newtheorem{definition}[theorem]{Definition}
\theoremstyle{remark}
\newtheorem{remark}[theorem]{Remark}
\numberwithin{equation}{section}
\newcommand{\intav}[1]{\mathchoice {\mathop{\vrule width 6pt height 3 pt depth  -2.5pt
\kern -8pt \intop}\nolimits_{\kern -6pt#1}} {\mathop{\vrule width
5pt height 3  pt depth -2.6pt \kern -6pt \intop}\nolimits_{#1}}
{\mathop{\vrule width 5pt height 3 pt depth -2.6pt \kern -6pt
\intop}\nolimits_{#1}} {\mathop{\vrule width 5pt height 3 pt depth
-2.6pt \kern -6pt \intop}\nolimits_{#1}}}
\begin{document}

\title[Anisotropic fully nonlinear equations]{Regularity for anisotropic fully nonlinear\\
integro-differential equations}

\author[L. Caffarelli]{Luis A. Caffarelli}
\address{Department of Mathematics, University of Texas at Austin, 1 University Station, C1200, Austin, Texas 78712, USA.}
\email{caffarel@math.utexas.edu}

\author[R. Leit\~ao]{Raimundo Leit\~ao}
\address{CMUC, Department of Mathematics, University of Coimbra, 3001-501 Coimbra, Portugal.}
\email{raimundo@mat.uc.pt}

\author[J.M. Urbano]{Jos\'e Miguel Urbano}
\address{CMUC, Department of Mathematics, University of Coimbra, 3001-501 Coimbra, Portugal.}
\email{jmurb@mat.uc.pt}

\begin{abstract}

We consider fully nonlinear integro-differential equations governed by kernels that have different homogeneities in different directions. We prove a nonlocal version of the ABP estimate, a Harnack inequality and the interior $C^{1, \gamma}$ regularity, extending the results of \cite{CS} to the anisotropic case. 

\bigskip

\noindent{\sc Key words}: fully nonlinear integro-differential equations, regularity theory, ABP estimate, Harnack inequality, anisotropy.

\noindent{\sc AMS Subject Classification MSC 2010}: 35J60, 47G20, 35D40, 35B65

\end{abstract}

\maketitle

\section{Introduction}

In this work we develop a regularity theory for elliptic fully nonlinear integro-differential equations of the type
\begin{equation}\label{def F1}
Iu\left( x \right) := \inf\limits_{\alpha}\sup\limits_{\beta} L_{\alpha \beta}u\left( x \right)= 0, 
\end{equation}
where 
\begin{equation}\label{operator}
L_{\alpha \beta}u\left( x \right) :=  \int_{\mathbb{R}^{n}}\left( u\left( x + y \right) - u\left(x \right) - \nabla u \left( x \right)\cdot y \chi_{B_{1}}\left( y \right)\right) K_{\alpha\beta}\left( y\right)dy,
\end{equation}
and the kernels $K_{\alpha\beta}$ are symmetric and satisfy the anisotropic bounds
\begin{equation}\label{Kernel cond 2}  
\dfrac{\lambda c_{\sigma} }{\sum_{i=1}^{n}|y_{i}|^{n+\sigma_{i}}} \leq K_{\alpha\beta}\left( y \right) \leq   \dfrac{\Lambda c_{\sigma}}{\sum_{i=1}^{n}|y_{i}|^{n+\sigma_{i}}}, \quad \forall y \in \mathbb{R}^{n},
\end{equation}
for $0< \lambda \leq \Lambda $, $0 < \sigma_{i} < 2$, and $c_{\sigma} = c\left( \sigma_{1}, \dots, \sigma_{n} \right) > 0$ a normalization constant.

Equations of type \eqref{def F1} appear extensively in the context of stochastic control problems (see \cite{SO}), namely in competitive stochastic games with two or more players, which are allowed to choose from different strategies at every step in order to maximize the expected value of some function at the first exit point of a domain. Integral operators like \eqref{operator} correspond to purely jump processes when diffusion and drift are neglected. The anisotropic setting we consider is bound to be of use in the context of financial mathematics, namely for Black-Scholes models that use certain jump-type processes instead of diffusions (cf. \cite{R}).

The isotropic version of the problem, with \eqref{Kernel cond 2} replaced by
\begin{equation}
 \label{isotropic kernel}  \dfrac{\lambda \left( 2- \sigma \right)}{|y|^{n+\sigma}} \leq K_{\alpha\beta}\left( y \right) \leq   \dfrac{\Lambda  \left( 2- \sigma \right)}{|y|^{n+\sigma}}, \quad \forall y \in \mathbb{R}^{n},
\end{equation}
for $0< \sigma < 2$, is studied in \cite{CS}, exploring the analogy between ellipticity and the condition 
$$M^{-}_{\mathcal{L}}v\left( x \right) \leq I\left( u + v \right)\left( x \right)  - Iv\left( x \right)  \leq   M^{+}_{\mathcal{L}}v\left( x \right), \quad \ \forall y \in \mathbb{R}^{n}.$$
Here, $\mathcal{L}$ is the class of operators $L_{\alpha\beta}$ whose kernels satisfy \eqref{isotropic kernel} and the operators 
$$M^{-}_{\mathcal{L}} u \left( x \right) := \inf \limits_{L \in \mathcal{L}} Lu\left( x \right) \quad \text{and} \quad M^{+}_{\mathcal{L}} u \left( x \right) := \sup\limits_{L \in \mathcal{L}} Lu\left( x \right)$$    
correspond to the extremal Pucci operators in the theory of elliptic equations of second order. The non-variational approach to regularity theory for (sub and super) viscosity solutions of the isotropic version of equation \eqref{def F1} is a nonlocal version of the strategy used in \cite{CC} for second order fully nonlinear elliptic equations. 

In the classical non-variational approach, the crucial step towards a regularity theory is the celebrated Aleksandrov--Bakel'man--Pucci estimate (ABP estimate, in short), which amounts to the bound
\begin{equation} \label{abpzinha}
\sup_{B_1} u \leq  C\left( n \right) \left( \int_{\left\lbrace \Gamma = u \right\rbrace \cap B_{1}} \left( f^{+}\right)^n \right)^{1/n},
\end{equation}
for any viscosity subsolution $u$ of the maximal Pucci equation with right-hand side $(-f)$ taking non-positive values outside the unit ball $B_{1}$ . Here, $\Gamma$ is the concave envelope of $u$ in $B_{3}$.  The technical advantage of the ABP estimate stems from relating a pointwise estimate with an estimate in measure. More precisely, $u\left( 0 \right) \geq 1$ implies  
$$1 \leq  C\Vert f \Vert_{L^{\infty}} \left |  \left\lbrace \Gamma = u \right\rbrace \cap B_{1} \right |^{\frac{1}{n}} \leq C \Vert f \Vert_{L^{\infty}} \left |  \left\lbrace u \geq 0 \right\rbrace \cap B_{1} \right |^{\frac{1}{n}}.  $$

In the nonlocal setting, the ABP estimate must be modified in face of the structural differences of the operator. In the isotropic case of \cite{CS}, we have to replace \eqref{abpzinha} by the following two assertions, which still give access to the regularity theory: 

\begin{enumerate}
\item [i.] $u$ stays quadratically close to the tangent plane to $\Gamma$ in a large portion of a neighbourhood around a contact point:
$$ \left | \left\lbrace y \in 8\sqrt{n}Q_{j} : u\left( y \right) \geq \Gamma \left( y \right) - \left( \max_{\overline{Q}_{j}} f^{+} \right)d^{2}_{j} \right\rbrace \right | \geq \varsigma \left | Q_{j} \right |;$$

\item [ii.] $\Gamma$ has quadratic growth and therefore
$$ \left | \nabla \Gamma \left( \overline{Q}_{j} \right) \right | \leq C \left( \max_{\overline{Q}_{j}} f^{+} \right)^{n} \left | \overline{Q}_{j} \right |,$$
\end{enumerate}
for a finite family of disjoint open cubes $\left\lbrace Q_{j}\right\rbrace$ with diameters $d_{j} \leq  \frac{1}{8\sqrt{n}}$ such that 
$$
\left\lbrace  u = \Gamma \right\rbrace \subset \bigcup_{j} \overline{Q}_{j} \quad \text{and} \quad \left\lbrace  u = \Gamma \right\rbrace \cap \overline{Q}_{j} \neq \emptyset,
$$
where $\nabla \Gamma$ stands for any element of the superdifferential of $\Gamma$, and the constants $\varsigma >0$ and $C>0$ only depend on dimension and the ellipticity constants.

Then, using i. and ii., we get from $u(0) \geq 1$,
\begin{eqnarray} \label{Use Non ABP}
1 & \leq & C\Vert f \Vert_{L^{\infty}} \left |  \left\lbrace u \geq \Gamma - \frac{1}{64n} \Vert f \Vert_{L^{\infty}}\right\rbrace \cap B_{1}\right |^{\frac{1}{n}} \nonumber \\
& \leq & C\Vert f \Vert_{L^{\infty}} \left |  \left\lbrace u \geq - \frac{1}{64n}\Vert f \Vert_{L^{\infty}}\right\rbrace \cap B_{1}\right |^{\frac{1}{n}},  
\end{eqnarray}
which is still enough to complete a regularity theory. A covering lemma by open cubes $Q_{j}$ that satisfy assertions i. and ii. is crucial in obtaining \eqref{Use Non ABP} in the nonlocal case, for which the classical inequality \eqref{abpzinha} does not hold.

To treat the anisotropic case we use the same strategy as in \cite{CS} but the anisotropic geometry driven by the kernels $K_{\alpha \beta}$ requires a refinement of the techniques. We comment in the sequel on the main difficulties we came across and how to overcome them.

\begin{enumerate}

\medskip

\item \textit{Assertion i.} At this step of the analysis, the challenge is to find the suitable geometry of the neighbourhoods of the contact points within which there is a (large) portion where a subsolution $u$ stays quadratically close to the tangent plane to $\Gamma$ and such that, in smaller neighbourhoods (with the same geometry), the concave envelope $\Gamma$ has quadratic growth. A careful analysis of the anisotropic nonlocal version of inequality $M^{+}_{\mathcal{L}} u \geq - f$ satisfied by $u$ at the contact points allows us to conclude that the appropriate geometry is the geometry determined by the level sets of the kernels $K_{\alpha \beta}$:
$$
 \Theta_{r}\left( x \right) := \left\lbrace \left( y_{1}, \dots, y_{n}\right) \in \mathbb{R}^{n} : \sum_{i=1}^{n} | y_{i} - x_{i} |^{n+\sigma_{i}} < r \right\rbrace, 
$$
for $x \in \left\lbrace \Gamma = u \right\rbrace \cap B_{1}$.
It is also here that we choose the appropriate normalisation constant:
$$
c_{\sigma} = -1 + \frac{3}{n+\sigma_{\max}} + \sum\limits_{\sigma_{j}\neq \sigma_{\max}}\frac{1}{n+\sigma_{j}}.
$$

\medskip

\item \textit{Assertion ii.} Given a positive number $h>0$, a fine analysis allows us to conclude that if a concave function, for instance the concave envelope $\Gamma$, remains below its tangent plane translated by $-h$ in a (universally sufficiently small in measure) portion of a (sufficiently large) annulus of the unit ball, for example $B_{1}\setminus B_{\frac{1}{2}}$, then $\Gamma + h$ is above its tangent plane in the interior ball of the annulus, in this case $B_{\frac{1}{2}}$. In the anisotropic case, the difficulty is to extend this argument to the anisotropic balls $\Theta_{r}$. Through the anisotropic transformation $T:\mathbb{R}^{n} \rightarrow \mathbb{R}^{n}$, defined by
$$
Te_{i}:= r^{\frac{1}{n+\sigma_{i}}}e_{i},
$$
and taking into account that the composition of a concave function with an affine function is still concave, we extend this fine analysis to ellipses. We then use the previous step and the symmetry of the anisotropic balls $\Theta_{r}$ with respect to $x$ to conclude that $\Gamma$ grows quadratically in such anisotropic balls. 

\medskip

\item \textit{Covering Lemma.} In \cite{CS}, the Besicovitch Covering Lemma is used. Our covering is naturally made of $n$-dimensional rectangles $\mathcal{R}_{j}$ and we invoke a covering lemma from \cite{CCal}. We stress that this covering lemma allows for a change of direction in the homogeneity degrees $\sigma_{i}$, but each $\sigma_{i}$ must remain constant. Degenerate spatial changes of the homogeneities $\sigma_{i}$, arising for example in the context of spherical operators or other special weights, would require the use of a more general covering lemma like the one in \cite{CCal2}. In adapting our results to that case, the main difficulty lies in the use of the barriers and we plan to address this issue in a forthcoming paper.
 
\end{enumerate}

\medskip

\noindent With this at hand, we then use the natural anisotropic scaling to build an adequate barrier function and, together with the nonlocal anisotropic version  of the ABP estimate, we prove a lemma that links a pointwise estimate with an estimate in measure, Lemma \ref{Point Estimates 1}. This is the fundamental step towards a regularity theory. The iteration of Lemma \ref{Point Estimates 1} implies the decay of the distribution function $\lambda_{u}:= \left| \left\lbrace u > t \right\rbrace \right |$ and the tool that makes this iteration possible is the so called Calder\'on -Zygmund decomposition. Since our scaling is anisotropic we need a Calder\'on -Zygmund decomposition for $n$-dimensional rectangles generated by our scaling. A fundamental device we use for that decomposition is the Lebesgue differentiation theorem for $n$-dimensional rectangles that satisfy the condition of Caffarelli-Calder\'on in \cite{CCal}. Then we prove the Harnack inequality and, as a consequence, we obtain the interior $C^{\gamma}$ regularity for a solution $u$ of equation \eqref{def F1} and, under additional assumptions on the kernels $K_{\alpha \beta}$, interior $C^{1, \gamma}$ estimates.

We finally observe that the power of the estimates obtained in \cite{CS} is revealed as $\sigma \rightarrow 2$. In fact, since the estimates remain uniform in the degree $\sigma$, it was possible to obtain an interesting relation between the theory of integro-differential equations and that of elliptic differential equations through the natural limit:
$$
\lim \limits_{\sigma \rightarrow 2} \int_{\mathbb{R}^{n}}\dfrac{c_{n}\left( 2 - \sigma \right)}{\vert y \vert^{n+\sigma}} \left( u \left( x + y \right) + u \left( x - y \right) - 2u \left( x \right) \right) dy$$
$$ = \lim \limits_{\sigma \rightarrow 2} - \left( - \Delta \right)^{\frac{\sigma}{2}}u\left( x \right) = \Delta u\left( x \right),   
$$
where $c_{n} > 0$ is a constant. This contrasts with previous results in the literature on Harnack inequalities and H\"older estimates for integro-differential equations, with either analytical proofs \cite{S} or
probabilistic proofs \cite{BK, BK2, BL, Son}, whose estimates blow up as the order of the equation approaches $2$. We emphasize that our estimates are also stable as $\sigma_{\min}:=\min\left\lbrace \sigma_{1}, \dots, \sigma_{n} \right\rbrace$ approaches 2. 

The paper is organised as follows. In section \ref{preliminaries} we gather all the necessary tools for our analysis: the notion of viscosity solution for the problem \eqref{def F1}, the extremal operators of Pucci type associated with the family of kernels $K_{\alpha \beta}$ and some notation. Section \ref{ABP Estimate}, where the nonlocal ABP estimate for a solution $u$ of equation \eqref{def F1} is obtained, is the most important of the paper. Sections \ref{Barrier function section} and \ref{Harnack Inequality Section} are devoted to the proof of the Harnack inequality and its consequences. 
 
\section{Viscosity solutions and extremal operators}\label{preliminaries}

In this section we collect the technical properties of the operator $I$ that we will use throughout the paper. Since $K_{\alpha\beta}$ is symmetric and positive, we have
$$
L_{\alpha \beta}u\left( x \right) =  PV \int_{\mathbb{R}^{n}}\left( u\left( x + y \right) - u\left(x \right) \right) K_{\alpha\beta}\left( y\right)dy
$$
and
$$
L_{\alpha \beta}u\left( x \right) =  \dfrac{1}{2} \int_{\mathbb{R}^{n}}\left( u\left( x + y \right) - u\left(x - y \right) - 2\left( x \right) \right) K_{\alpha\beta}\left( y\right)dy.
$$
For convenience of notation, we denote
$$
\delta \left( u, x, y \right):= u \left( x + y \right) + u \left( x - y \right) - 2u \left( x \right)
$$
and we can write
$$
L_{\alpha \beta} = \int_{\mathbb{R}^{n}}\delta \left( u, x, y \right) K_{\alpha\beta}\left( y\right)dy,
$$
for some kernel $K_{\alpha\beta}$.

We now define the adequate class of test functions for our operators.
 
\begin{definition}
\label{C^{1,1} definition}
A function $\phi$ is said to be $C^{1,1}$ at the point $x$, and we write $\phi \in C^{1,1}\left( x \right)$, if there is a vector $v \in \mathbb{R}^{n}$ and numbers $M, \eta_{0} >0$ such that 
$$
\vert \phi \left( x + y\right) - \phi \left( x \right) -  v \cdot y   \vert \leq M \vert y \vert^{2},
$$
for $\vert  x \vert < \eta_{0}$. We say that a function $\phi$ is $C^{1,1}$ in a set $\Omega$, and we denote $\phi \in C^{1,1}\left( \Omega \right)$, if the previous holds at every
point, with a uniform constant $M$.
\end{definition}

\begin{remark}
Let $u \in C^{1,1}\left( x \right) \cap L^{\infty}\left( \mathbb{R}^{n} \right)$ and $M>0$ and $\eta_{0}>0$ be as in definition \ref{C^{1,1} definition}. Then we estimate 
$$L_{\alpha \beta}u\left( x \right)  =  PV \int_{\mathbb{R}^{n}}\delta \left( u, x, y \right) K_{\alpha\beta}\left( y\right)dy$$
$$\leq  \left[  4c_{\sigma}\Lambda|u|_{L^{\infty}\left( \mathbb{R}^{n} \right)}\int_{\mathbb{R}^{n} \setminus B_{\eta_{0}} } \dfrac{1}{\sum_{i=1}^{n}|y_{i}|^{n+\sigma_{i}}} dy + 2Mc_{\sigma}\Lambda\int_{B_{\eta_{0}}} \dfrac{ |y|^{2}}{\sum_{i=1}^{n}|y_{i}|^{n+\sigma_{i}}} dy \right] $$
$$ \leq    \left[ 4c_{\sigma}\Lambda|u|_{L^{\infty}} 2^{\frac{n+2}{2}} \eta_{0}^{-\left( \sigma_{\max} - \sigma_{\min}\right)} \int_{\mathbb{R}^{n} \setminus B_{\eta_{0}}} \dfrac{1}{|y|^{n+\sigma_{\min}}} dy +  C\left(  n, \Lambda, M, \eta_{0} \right) \right] $$ 
$$ = \left[c_{\sigma}C\left( n, \Lambda, |u|_{L^{\infty}} \right) \frac{\eta_{0}^{- \sigma_{\max}}}{\sigma_{\min}} + C\left(  n, \Lambda, M, \eta_{0} \right)\right] $$
and conclude that $Iu\left( x \right) \in \mathbb{R}$. 
\end{remark}

We now introduce the notion of viscosity subsolution (and supersolution) $u$ in a domain $\Omega$, with $C^{2}$ test functions that touch $u$ from above or from below. We stress that $u$ is allowed to have arbitrary discontinuities outside of $\Omega$.

\begin{definition}
Let $f$ be a bounded and continuous function in $\mathbb{R}^{n}$. A function $u:\mathbb{R}^{n} \rightarrow \mathbb{R}$, upper (lower) semicontinuous in $\overline{\Omega}$, is said to be a subsolution (supersolution) to equation $Iu = f$, and we write $Iu \geq f$ ($Iu \leq f$), if whenever the following happen:  
\begin{enumerate}
\item $x_{0} \in \Omega$ is any point in $\Omega$;
\medskip
\item $B_{r}\left( x_{0} \right) \subset \Omega$, for some $r>0$;
\medskip
\item $\phi \in C^{2}\left(\overline{B_{r}\left( x_{0} \right)} \right)$;
\medskip
\item $\phi\left( x_{0} \right) = u\left( x_{0} \right)$;
\medskip
\item $\phi \left( y \right) > u\left( y \right)$ ($\phi \left( y \right) < u\left( y \right)$) for every $y \in B_{r}\left( x_{0} \right) \setminus \left\lbrace x_{0} \right\rbrace$; 
\end{enumerate}
then, if we let 
$$
v := \left \{ 
\begin{array}{lll}
\phi , & \text{ in } & B_{r}\left( x_{0} \right) \\
u & \text{ in } & \mathbb{R}^{n} \setminus  B_{r}\left( x_{0} \right), 
\end{array}
\right.
$$
we have $Iv\left( x_{0} \right) \geq f\left( x_{0} \right)$ ($Iv\left( x_{0} \right) \leq f\left( x_{0} \right)$).
\end{definition}

\begin{remark}
 Functions which are $C^{1,1}$ at a contact point $x$ can be used as test functions in the definition of viscosity solution (see Lemma 4.3 in \cite{CS}).
\end{remark}
Next, we define the class of linear integro-differential operators that will be a fundamental tool for the regularity analysis. Let $\mathcal{L}_{0}$ be the collection of linear operators $L_{\alpha \beta}$. We define the maximal and minimal operator with respect to $\mathcal{L}_{0}$ as
$$
M^{+} u \left( x \right) := \sup\limits_{L \in \mathcal{L}_{0}} Lu\left( x \right) 
$$
and
$$
M^{-} u \left( x \right) := \inf \limits_{L \in \mathcal{L}_{0}} Lu\left( x \right).
$$

By definition, if $M^{+} u \left( x \right) < \infty$ and $M^{-} u \left( x \right) < \infty$, we have the simple form
$$
M^{+} u \left( x \right) = c_{\sigma} \int_{\mathbb{R}^{n}} \dfrac{\Lambda \delta^{+} - \lambda \delta^{-}}{\sum_{i=1}^{n}|y_{i}|^{n+\sigma_{i}}} dy
$$
and
$$
M^{-} u \left( x \right) = c_{\sigma}  \int_{\mathbb{R}^{n}} \dfrac{\lambda \delta^{+} - \Lambda \delta^{-}}{\sum_{i=1}^{n}|y_{i}|^{n+\sigma_{i}}} dy.
$$

\begin{remark}
As in \cite{CS}, we could consider equation \eqref{def F1} for a  more general class $\mathcal{L}$ satisfying
$$
\int_{\mathbb{R}^{n}}\dfrac{\vert y \vert^{2}}{1 + \vert y \vert^{2}}K\left( y \right)dy < \infty,
$$
where $K\left( y \right):= \sup \limits_{\alpha \in \mathcal{L}} K_{\alpha}\left( y \right)$ and $K_{\alpha}\left( y \right) = K_{\alpha}\left( - y \right)$. 
\end{remark}

The proofs of the results that we now present can be found in the sections $3$, $4$ and $5$ of \cite{CS}. The first result ensures that if $u$ can be touched from above, at a point $x$, with a paraboloid then $Iu\left( x\right)$ can be evaluated classically.

\begin{lemma}
\label{clas sense max}
If we have a subsolution, $Iu \geq f$ in $\Omega$, and $\phi$ is a $C^{2}$ function that touches $u$ from above at a point $x \in \Omega$, then $Iu\left( x \right)$ is defined in the classical sense and $Iu\left( x\right) \geq f\left( x \right)$.
\end{lemma}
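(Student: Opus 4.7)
The plan is to prove the result by approximation: one replaces $u$ inside a small ball by the test function $\phi$ (suitably perturbed), applies the viscosity inequality to the resulting comparison function, and passes to the limit as the ball shrinks.

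First I would perturb $\phi$: since the viscosity definition requires \emph{strict} separation, set $\tilde\phi(y) := \phi(y) + \epsilon\,|y-x|^2$ for a small $\epsilon > 0$. Then $\tilde\phi \in C^2$, $\tilde\phi(x) = u(x)$, and $\tilde\phi(y) > u(y)$ strictly on $B_r(x)\setminus\{x\}$, where $B_r(x)\subset \Omega$ is the neighbourhood of touching. For $\rho \in (0,r)$, form the comparison function
$$
v_\rho(y) := \tilde\phi(y)\,\chi_{B_\rho(x)}(y) + u(y)\,\bigl(1-\chi_{B_\rho(x)}(y)\bigr).
$$
Because $v_\rho$ agrees with the $C^2$ function $\tilde\phi$ near $x$, is bounded globally, and $\tilde\phi$ is a valid (strict) test function, the subsolution definition yields $Iv_\rho(x) \geq f(x)$, and the computation in the remark after Definition~\ref{C^{1,1} definition} shows that each $L_{\alpha\beta}v_\rho(x)$ is a finite real number.

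Next I would send $\rho \to 0^+$. Because $v_\rho \geq u$ with equality at $x$ and $v_\rho \downarrow u$ pointwise on $\mathbb{R}^n\setminus\{x\}$ as $\rho$ decreases, $\delta(v_\rho,x,\cdot)\downarrow \delta(u,x,\cdot)$ monotonically. Monotone convergence then defines
$$
L_{\alpha\beta}u(x) := \int_{\mathbb{R}^n}\delta(u,x,y)K_{\alpha\beta}(y)\,dy \in [-\infty,+\infty)
$$
as the pointwise decreasing limit of $L_{\alpha\beta}v_\rho(x)$. The defect has the clean expression
$$
L_{\alpha\beta}v_\rho(x) - L_{\alpha\beta}u(x) = \int_{B_\rho}\bigl[(\tilde\phi-u)(x+y)+(\tilde\phi-u)(x-y)\bigr]K_{\alpha\beta}(y)\,dy,
$$
obtained from $(\tilde\phi-u)(x)=0$; the upper bound $\delta(\tilde\phi,x,y) \leq C|y|^2$ (since $\tilde\phi \in C^2$ with $\tilde\phi(x)=u(x)$) combined with the universal kernel bound \eqref{Kernel cond 2} makes this defect vanish as $\rho \to 0$, uniformly in $\alpha,\beta$. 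This uniformity permits the exchange of $\lim_{\rho\to 0}$ with $\inf_\alpha\sup_\beta$, so that $\lim_\rho Iv_\rho(x) = Iu(x) \geq f(x)$, with $Iu(x)$ finite.

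Finally, letting $\epsilon \to 0$ removes the perturbation; the correction to each $L_{\alpha\beta}$ coming from the bump $\epsilon\,|y-x|^2$ is $O(\epsilon)$ via the same kernel estimate, so the inequality $Iu(x) \geq f(x)$ survives. The main obstacle is the uniform-in-$(\alpha,\beta)$ control of the defect $L_{\alpha\beta}(v_\rho-u)(x)$ needed to commute the limit with $\inf_\alpha \sup_\beta$; it rests on the cancellation $(v_\rho-u)(x)=0$ in $\delta$ together with the universal kernel bound \eqref{Kernel cond 2}.
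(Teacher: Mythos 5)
There is a genuine gap at the central step. Your ``defect''
$$
L_{\alpha\beta}v_\rho(x)-L_{\alpha\beta}u(x)=\int_{B_\rho}\bigl[(\tilde\phi-u)(x+y)+(\tilde\phi-u)(x-y)\bigr]K_{\alpha\beta}(y)\,dy
$$
equals $\int_{B_\rho}\bigl[\delta(\tilde\phi,x,y)-\delta(u,x,y)\bigr]K_{\alpha\beta}(y)\,dy$, and the bound $\delta(\tilde\phi,x,y)\le C|y|^2$ controls only the first term. The second contributes $\delta^-(u,x,y)$, which is \emph{not} controlled by the regularity of $\tilde\phi$: touching from above gives an upper bound $\delta(u,x,y)\le C|y|^2$ but no lower bound, and a priori $\int_{B_\rho}\delta^-(u,x,y)K(y)\,dy$ may equal $+\infty$ for every $\rho$ (e.g.\ $u(x+y)=u(x)+\nabla\tilde\phi(x)\cdot y-|y|^{3/2}$ near $x$, with $\sigma_{\min}>3/2$, is touched from above by an affine function). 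In that situation the defect is identically $+\infty$, it certainly does not vanish, and $L_{\alpha\beta}u(x)=-\infty$, i.e.\ the very conclusion ``$Iu(x)$ is defined in the classical sense'' fails. You have implicitly assumed the finiteness of $\int\delta^-(u,x,\cdot)K\,dy$, which is precisely the nontrivial content of the lemma, and nothing in your argument uses the equation to obtain it.

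The missing ingredient, which is how the proof in \cite{CS} proceeds, is to \emph{deduce} this finiteness from the viscosity inequality. Since $\delta(v_\rho,x,\cdot)\downarrow\delta(u,x,\cdot)$ and $\delta^+(v_{\rho_0},x,\cdot)$ is dominated by $\min(C|y|^2,C)$, if one had $\int\delta^-(u,x,y)\bigl(\sum_i|y_i|^{n+\sigma_i}\bigr)^{-1}dy=+\infty$, then monotone convergence applied to $\delta^-(v_\rho,x,\cdot)\uparrow\delta^-(u,x,\cdot)$ would force $M^+v_\rho(x)\to-\infty$, contradicting $f(x)\le Iv_\rho(x)\le M^+v_\rho(x)$. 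Once $\int\delta^-(u,x,\cdot)K\,dy<\infty$ is established, the defect integrand is dominated on $B_\rho$ by the fixed integrable function $\bigl(\delta^+(v_{\rho_0},x,\cdot)+\delta^-(u,x,\cdot)\bigr)K$, with $K$ replaced by the maximal kernel, and absolute continuity of the integral --- not the $C^2$ bound on $\tilde\phi$ --- yields the uniform-in-$(\alpha,\beta)$ vanishing you need to commute the limit with $\inf_\alpha\sup_\beta$. With this step inserted, the rest of your argument (the strict perturbation $\tilde\phi=\phi+\epsilon|y-x|^2$ and the final passage $\epsilon\to0$) goes through.
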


Another important property of $I$ is the continuity of $I\phi$ in $\Omega$ if $\phi \in C^{1, 1}\left( \Omega \right)$.

\begin{lemma}
\label{I is C^{1,1}}
Let $v$ be a
bounded function in $\mathbb{R}^{n}$ and $C^{1,1}$ in some open set $\Omega$. Then $Iv$ is continuous in $\Omega$.
\end{lemma}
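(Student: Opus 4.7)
Since the inf--sup of a family of functions that share a modulus of continuity inherits that modulus, it suffices to exhibit, for each $x_0 \in \Omega$, a modulus of continuity for $\{L_{\alpha\beta} v\}_{\alpha, \beta}$ on a neighbourhood of $x_0$ that is independent of $\alpha$ and $\beta$. Fix such an $x_0$, choose $r > 0$ with $\overline{B_{2r}(x_0)} \subset \Omega$, let $M$ denote the $C^{1,1}$ constant of $v$ on $B_{2r}(x_0)$, and set $K^*(y) := \Lambda c_\sigma / \sum_{i=1}^n |y_i|^{n+\sigma_i}$, so that $K_{\alpha\beta} \le K^*$ uniformly. For $x, x' \in B_{r/2}(x_0)$ and $\eta_0 \in (0, r]$, decompose
\begin{equation*}
L_{\alpha\beta} v(x) - L_{\alpha\beta} v(x') = \left(\int_{|y| \le \eta_0} + \int_{|y| > \eta_0}\right) \bigl[\delta(v, x, y) - \delta(v, x', y)\bigr] K_{\alpha\beta}(y) \, dy =: J_1 + J_2.
\end{equation*}
The $C^{1,1}$ bound $|\delta(v, \cdot, y)| \le 2M|y|^2$ --- valid because $x \pm y, \, x' \pm y \in B_{2r}(x_0) \subset \Omega$ --- together with $K_{\alpha\beta} \le K^*$ gives $|J_1| \le 4M \int_{|y| \le \eta_0} |y|^2 K^*(y)\, dy$, which tends to $0$ as $\eta_0 \to 0$ by the integrability estimate in the Remark following Definition \ref{C^{1,1} definition}, uniformly in $\alpha, \beta, x, x'$.

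For $J_2$ I exploit the symmetry $K_{\alpha\beta}(-y) = K_{\alpha\beta}(y)$ to obtain
\begin{equation*}
J_2 = 2 \int_{|y| > \eta_0} [v(x + y) - v(x' + y)] K_{\alpha\beta}(y)\, dy - 2 [v(x) - v(x')] \int_{|y| > \eta_0} K_{\alpha\beta}(y)\, dy.
\end{equation*}
The second term is $O(|x - x'|)$ because $v$ is Lipschitz on $B_r(x_0)$ (a consequence of the uniform $C^{1,1}$ bound) and $\int_{|y| > \eta_0} K^*\, dy$ is finite. For the first term I would avoid invoking $L^1$ translation continuity of $K_{\alpha\beta}$---which need not be uniform in $\alpha, \beta$ for a general bounded family---and instead mollify the function: set $v_\varepsilon := v * \rho_\varepsilon$ for a standard mollifier $\rho_\varepsilon$. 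The contribution of $v - v_\varepsilon$ splits by a further cut-off $|y| = R$: the tail $|y| > R$ is bounded by $4 \|v\|_\infty \int_{|y| > R} K^*\, dy$ (small for $R$ large), and on $\eta_0 \le |y| \le R$ the change of variables $z = x + y$ yields the bound $\|K^*\|_{L^\infty(\{\eta_0 \le |\cdot| \le R\})} \|v - v_\varepsilon\|_{L^1(B_{R + |x_0| + r})}$ (small for $\varepsilon$ small), both controls being uniform in $\alpha, \beta, x, x'$. The smooth remainder satisfies
\begin{equation*}
\left| \int_{|y| > \eta_0} [v_\varepsilon(x + y) - v_\varepsilon(x' + y)] K_{\alpha\beta}(y)\, dy \right| \le \|\nabla v_\varepsilon\|_\infty \, |x - x'| \int_{|y| > \eta_0} K^*(y)\, dy,
\end{equation*}
i.e.\ $O(|x - x'|)$ for each fixed $\varepsilon$.

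Combining the estimates --- choose $\eta_0$ small so $J_1$ is negligible, then $R$ large, $\varepsilon$ small, and finally $|x - x'|$ small --- produces an equicontinuous modulus for $\{L_{\alpha\beta} v\}$ on $\overline{B_{r/2}(x_0)}$, which passes to the inf--sup $Iv$. The main obstacle I anticipate is precisely this uniformity in $\alpha, \beta$: translating $K_{\alpha\beta}$ in $L^1$ need not give a uniform modulus across the family (oscillating kernels dominated by the same $K^*$ can fail $L^1$-equicontinuity), and the mollification trick is what transfers the required regularity from the kernels onto the smooth function $v_\varepsilon$, where it is unconditional.
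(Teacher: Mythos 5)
Your proof is correct. Note that the paper does not actually prove this lemma: it is one of the statements imported wholesale from \cite{CS} (``the proofs \dots can be found in the sections 3, 4 and 5 of \cite{CS}''), so the comparison is with the argument there. That argument shares your skeleton --- reduce to an equicontinuity statement for the family $\{L_{\alpha\beta}v\}$, split at $|y|=\eta_{0}$, and kill the singular part with the $C^{1,1}$ bound $|\delta(v,x,y)|\le 2M|y|^{2}$ against $\int_{B_{\eta_{0}}}|y|^{2}K^{*}(y)\,dy$ --- but disposes of the far-field term by a dominated-convergence/translation argument against the dominating kernel. That is precisely the point you flag: two-sided comparability with a fixed $K^{*}$ gives neither a.e.\ convergence of $\delta(v,x,y)$ for a merely bounded, possibly discontinuous $v$, nor $L^{1}$-equicontinuity of the translates of $K_{\alpha\beta}$ uniformly in $(\alpha,\beta)$. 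Your mollification of $v$, combined with the second cut-off at $|y|=R$ and the interplay between $\|v-v_{\varepsilon}\|_{L^{1}_{\mathrm{loc}}}\to 0$ and the boundedness of $K^{*}$ on the annulus $\{\eta_{0}\le|y|\le R\}$, transfers the needed regularity onto $v_{\varepsilon}$ and makes the modulus genuinely uniform over the family; the order of the choices ($\eta_{0}$, then $R$, then $\varepsilon$, then $|x-x'|$) is consistent, since the divergent quantity $\int_{|y|>\eta_{0}}K^{*}$ is frozen before $|x-x'|$ is sent to zero. The only step worth tightening is cosmetic: the paper's pointwise $C^{1,1}$ definition allows the radius $\eta_{0}$ to depend on the point, so extending the quadratic bound to all $|y|\le r$ (and deriving the local Lipschitz bound you invoke for the term $[v(x)-v(x')]\int_{|y|>\eta_{0}}K_{\alpha\beta}$) requires either a uniform radius on $\overline{B_{2r}(x_{0})}$ or a short chaining argument; this is standard and does not affect the proof.
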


The next lemma allows us to conclude that the difference between a subsolution of the maximal operator $M^{+}$ and a supersolution of the minimal operator $M^{-}$ is a subsolution of the maximal operator.

\begin{lemma}
\label{Comp. princ.}
Let $\Omega$ be a bounded open set and $u$ and $v$ be two bounded functions in $\mathbb{R}^{n}$ such that

\begin{enumerate}

\item $u$ is upper-semicontinuous and $v$ is lower-semicontinuous in $\overline{\Omega}$;
\medskip
\item $Iu \geq f$ and $Iv \leq g$ in the viscosity sense in $\Omega$ for two continuous functions $f$ and $g$.

\end{enumerate}
Then 
$$M^{+}\left( u - v \right) \geq f - g \quad \mathrm{in} \ \ \Omega$$
in the viscosity sense.

\end{lemma}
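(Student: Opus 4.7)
The proof proceeds by reducing to a pointwise algebraic inequality via sup- and inf-convolution regularization, which is the standard device for comparing merely semicontinuous viscosity sub- and supersolutions. The pointwise key is the observation that whenever $Iu(x)$ and $Iv(x)$ are both defined classically, the min-max structure of $I$ and the definition of $M^+$ as $\sup_{L \in \mathcal{L}_0} L$ yield
\[
Iu(x) - Iv(x) \;\leq\; \sup_{\alpha}\bigl(\sup_{\beta} L_{\alpha\beta} u(x) - \sup_{\beta} L_{\alpha\beta} v(x)\bigr) \;\leq\; \sup_{\alpha,\beta} L_{\alpha\beta}(u-v)(x) \;\leq\; M^+(u-v)(x),
\]
using twice the elementary fact that $\inf A(\alpha) - \inf B(\alpha) \le \sup_\alpha (A(\alpha) - B(\alpha))$ and the same for suprema. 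Hence $Iu\ge f$, $Iv\le g$ classically at $x$ immediately give $M^+(u-v)(x)\ge f(x)-g(x)$ classically.

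To promote this to the viscosity framework I would regularize by
\[
u^\varepsilon(x) := \sup_{y \in \mathbb{R}^n}\Bigl( u(y) - \tfrac{|x-y|^2}{\varepsilon}\Bigr), \qquad v_\varepsilon(x) := \inf_{y \in \mathbb{R}^n}\Bigl( v(y) + \tfrac{|x-y|^2}{\varepsilon}\Bigr),
\]
so that $u^\varepsilon$ is semi-convex and $v_\varepsilon$ is semi-concave on compacts, and both are punctually $C^{1,1}$ almost everywhere by Aleksandrov's theorem. A nonlocal adaptation of the standard calculation (as carried out in \cite{CS}) shows that $u^\varepsilon$ and $v_\varepsilon$ remain, respectively, a subsolution of $Iu^\varepsilon \geq f_\varepsilon$ and a supersolution of $Iv_\varepsilon \leq g_\varepsilon$ in an interior subdomain $\Omega_\varepsilon \subset \Omega$ exhausting $\Omega$ as $\varepsilon \to 0$, with $f_\varepsilon\to f$ and $g_\varepsilon\to g$ locally uniformly. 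At a.e.\ point of $\Omega_\varepsilon$ Lemma \ref{clas sense max} (applied to $u^\varepsilon$ from above, and its symmetric counterpart to $v_\varepsilon$ from below) guarantees classical evaluation of both $Iu^\varepsilon$ and $Iv_\varepsilon$, so the pointwise inequality yields
\[
M^+(u^\varepsilon - v_\varepsilon)(x) \;\geq\; f_\varepsilon(x) - g_\varepsilon(x) \quad \textrm{for a.e. } x \in \Omega_\varepsilon.
\]

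To finish, let $\phi\in C^2$ strictly touch $u-v$ from above at $x_0$ on a ball $B_r(x_0)\subset\Omega$. Since $u^\varepsilon - v_\varepsilon \to u-v$ locally uniformly, $\phi$ is touched from above by $u^\varepsilon-v_\varepsilon$ at some point $x_\varepsilon \to x_0$. Applying Jensen's lemma to the semi-convex function $\phi-(u^\varepsilon-v_\varepsilon)$ produces contact points $x_\varepsilon^k$ arbitrarily close to $x_\varepsilon$ at which a small affine perturbation of $\phi$ still touches $u^\varepsilon-v_\varepsilon$ from above and at which both $u^\varepsilon$ and $v_\varepsilon$ are punctually $C^{1,1}$; at such points the a.e.\ inequality above holds. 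Invoking the continuity guaranteed by Lemma \ref{I is C^{1,1}} for the truncated/replaced test function, and passing to the limit first in the Jensen perturbation and then $\varepsilon\to 0$, delivers the required viscosity inequality $M^+(w)(x_0) \ge f(x_0)-g(x_0)$, where $w$ is the replacement function from the definition of viscosity sub/supersolution.

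The main obstacle is the nonlocal preservation of the sub/supersolution property under sup-convolution. In the local second-order case this follows trivially from translation invariance, but here the sup-convolution alters $u$ globally, and the resulting error in the integral operator must be controlled by the anisotropic tail integrability $\int_{\mathbb{R}^n\setminus B_\eta}(\sum_i |y_i|^{n+\sigma_i})^{-1}\,dy < \infty$ coming from \eqref{Kernel cond 2}. This is the step where the anisotropic geometry first enters the analysis; it is worked out for the isotropic class in \cite{CS} and carries over to the present setting provided one tracks the $\sigma_i$-dependence in that tail bound.
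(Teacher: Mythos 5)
The paper does not prove this lemma itself: it is one of the facts quoted from Sections 3--5 of \cite{CS} (it is Theorem 5.9 there), so the relevant comparison is with the Caffarelli--Silvestre argument. Your proposal is essentially that argument, and its core is right: the elementary inequality $Iu(x)-Iv(x)\le M^{+}(u-v)(x)$ at points of classical evaluation, plus sup/inf-convolution to reduce the semicontinuous case to the punctually $C^{1,1}$ case, plus stability under $\Gamma$-convergence as $\varepsilon\to 0$. The one place where you take a genuinely different (and unnecessarily heavy) route is the use of Jensen's lemma and almost-everywhere punctual $C^{1,1}$ points. In the nonlocal setting this detour can be avoided entirely: if $\phi\in C^{2}$ touches $u^{\varepsilon}-v_{\varepsilon}$ from above at $x_{0}$, then $u^{\varepsilon}\le v_{\varepsilon}+\phi$ near $x_{0}$ with equality at $x_{0}$, and since $v_{\varepsilon}+\phi$ is semiconcave while $u^{\varepsilon}$ is semiconvex, $u^{\varepsilon}$ is touched by paraboloids from both sides at $x_{0}$ itself, i.e.\ it is punctually $C^{1,1}$ there; symmetrically for $v_{\varepsilon}$. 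So both $Iu^{\varepsilon}(x_{0})$ and $Iv_{\varepsilon}(x_{0})$ are classically defined at the actual contact point (Lemma \ref{clas sense max} and its supersolution counterpart), and the algebraic inequality applies directly, with no need to perturb the test function, chase a.e.\ points, or invoke the continuity of $I$ along a sequence of replaced functions --- which is the step your write-up glosses over and which would require some care to justify. The remaining ingredients you cite (preservation of the sub/supersolution property under sup-convolution up to an error $d_{\varepsilon}\to 0$, and stability of viscosity inequalities under the resulting convergence) are exactly Propositions 5.4--5.6 and Lemma 4.5 of \cite{CS}; as you correctly note, the only anisotropy-sensitive point is the tail bound $\int_{\mathbb{R}^{n}\setminus B_{\eta}}\bigl(\sum_{i}|y_{i}|^{n+\sigma_{i}}\bigr)^{-1}\,dy<\infty$, which holds here and makes the adaptation routine.
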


We conclude this section introducing some notation that will be instrumental in the sequel.
Given $ r, s > 0$ and $x \in \mathbb{R}^{n}$, we will denote 
$$
E^{}_{r, s}\left( x \right) := \left\lbrace \left( y_{1}, \dots, y_{n}\right) \in \mathbb{R}^{n} : \sum_{i=1}^{n} \frac{\left( y_{i} - x_{i} \right)^{2}}{r^{\frac{2}{n+\sigma_{i}}}} < s^{2} \right\rbrace
$$
and
$$
R^{}_{r,s}\left( x \right) := \left\lbrace \left( y_{1}, \dots, y_{n}\right) \in \mathbb{R}^{n} : | y_{i} - x_{i} | < s^{\frac{1}{n+\sigma_{\min}}} r^{\frac{1}{n+\sigma_{i}}} \right\rbrace. 
$$
Given the box $R^{}_{r,s}$, we define the corresponding box  $\tilde{R}^{}_{r,s}$ by
$$
\tilde{R}^{}_{r,s}\left( x \right) := \left\lbrace \left( y_{1}, \dots, y_{n}\right) \in \mathbb{R}^{n} : | y_{i} - x_{i} | <  \left( s r \right) ^{\frac{1}{n+\sigma_{i}}} \right\rbrace. 
$$
If $\sigma_{\min}:=\min\left\lbrace \sigma_{1}, \dots, \sigma_{n} \right\rbrace$ we define
$$
i_{\min}:= \min\left\lbrace j :  \sigma_{\min}= \sigma_{j}  \right\rbrace.
$$

\begin{remark}
Let $r>0$. Hereafter, we will use the following relations:
\begin{enumerate}
\item $E^{}_{r,\frac{1}{2}} \subset \Theta_{r} \subset E^{}_{r,\sqrt{n}}$;
\medskip
\item $\Theta_{2^{-\mathfrak{C}}r} \subset E^{}_{r,\frac{1}{8}}$, for some natural number $\mathfrak{C}=\mathfrak{C}\left( n \right) > 0$;
\medskip
\item $R^{}_{r,s} \subset \tilde{R}^{}_{r,s}$, if $0 < s < 1$. 
\end{enumerate}
\end{remark}

\section{Nonlocal anisotropic ABP estimate} \label{ABP Estimate}
 
Let $u$ be a non positive function outside the ball $B_{1}$. We define the concave envelope of $u$ by 
$$
\Gamma \left( x \right) := \left \{ 
\begin{array}{lll}
\min \left\lbrace p\left( x\right): \ \text{for all planes} \ p \geq u^{+} \ \text{in} \ B_{3} \right\rbrace , & \text{ in } & B_{3} \\
\\
0 & \text{ in } & \mathbb{R}^{n}\setminus B_{3}. 
\end{array}
\right. 
$$  
 
\begin{lemma} \label{cov. 1 lemma} Let $u \leq 0$ in $\mathbb{R}^{n}\setminus B_{1}$ and $\Gamma$ be its concave envelope. Suppose $M^{+}u\left( x \right) \geq -f\left( x\right)$ in $B_{1}$. Let $\rho_{0} = \rho_{0}\left( n \right) > 0$,
$$
 r_{k} := \rho_{0}2^{-\frac{1}{q_{\max}} }2^{-\mathfrak{C}\left( n + \sigma_{\min} \right)k},
$$
where 
$$q_{i} := -1 + \frac{3}{n+\sigma_{i}} + \sum\limits_{j\neq i}\frac{1}{n+\sigma_{j}}$$ 
and $q_{\max}:= \max \left\lbrace q_{1}, \dots, q_{n} \right\rbrace$.
Given $M>0$, we define
$$W_{k}\left( x \right)  := \Theta_{r_{k}}\setminus \Theta_{r_{k+1}} \cap $$
$$\cap \left\lbrace y : u\left( x + y\right) < u\left( x \right) + \langle y, \nabla \Gamma \left( x \right)\rangle - M\inf \limits_{z \in \Theta_{r_{k}}\setminus \Theta_{r_{k+1}}}\langle Az, z  \rangle  \right\rbrace,$$
 where the matrix $A = \left( a_{ij}\right)$ is defined by
$$
a_{ij}:= \left \{ 
\begin{array}{lll}
1, \ \text{ if } \ i=j=i_{\min}  \\
\\
0, \ \text{ if } \ i\neq j\\
\\
2^{\left( -\frac{1}{n+\sigma_{\min}} + \frac{1}{n+\sigma_{j}}\right) \frac{2}{q_{\max}} },\ \text{ if } \ i=j\neq i_{\min}.
\end{array}
\right.
$$
Then there exists a constant $C_{0}>0$, depending only on $n$, $\lambda$ (but not $\sigma_{i}$), such that, for any $x \in \left\lbrace u = \Gamma \right\rbrace $ and any $M>0$, there is a $k$ such that
\begin{equation}
\label{meas est 1}
\left | W_{k}\left( x \right)  \right | \leq C_{0} \frac{f\left( x \right)}{M} \left | \Theta_{r_{k}}\setminus \Theta_{r_{k+1}} \right |. 
\end{equation}
\end{lemma}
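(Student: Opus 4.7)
The plan is to argue by contradiction, exploiting a shell decomposition by the anisotropic balls $\Theta_{r_k}$ to convert the integral bound coming from $M^{+}u(x)\ge -f(x)$ into the stated measure estimate on one of the shells. First, at a contact point $x\in\{u=\Gamma\}$ the concavity of $\Gamma$ together with $u\le\Gamma$ (and $u\le 0\le\Gamma$ outside $B_1$) imply that the tangent plane $\phi(y):=\Gamma(x)+\langle y-x,\nabla\Gamma(x)\rangle$ lies above $u$ on all of $\mathbb{R}^n$ and equals $u$ at $x$. Since $\phi$ is smooth, Lemma \ref{clas sense max} gives $M^{+}u(x)\ge -f(x)$ in the classical sense, and the tangent-plane bound forces
$$\delta(u,x,y):=u(x+y)+u(x-y)-2u(x)\le 0\quad\text{for every } y,$$
so $\delta^{+}\equiv 0$ and the maximal-operator inequality collapses to
$$\lambda c_{\sigma}\int_{\mathbb{R}^{n}}\frac{|\delta(u,x,y)|}{\sum_{i=1}^{n}|y_{i}|^{n+\sigma_{i}}}\,dy\le f(x).$$

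Next, I would split the integral over the shells $\Theta_{r_{k}}\setminus\Theta_{r_{k+1}}$ and, using the pointwise lower bound $|\delta(u,x,y)|\ge M\inf_{\Theta_{r_{k}}\setminus\Theta_{r_{k+1}}}\langle Az,z\rangle$ on $W_{k}(x)$ together with $\sum|y_{i}|^{n+\sigma_{i}}<r_{k}$ on each shell, obtain
$$\lambda c_{\sigma}\sum_{k\ge 0}\frac{M\inf_{\Theta_{r_{k}}\setminus\Theta_{r_{k+1}}}\langle Az,z\rangle}{r_{k}}\,|W_{k}(x)|\le f(x).$$
Suppose, for contradiction, that $|W_{k}(x)|>C_{0}\tfrac{f(x)}{M}|\Theta_{r_{k}}\setminus\Theta_{r_{k+1}}|$ for every $k$. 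Substituting this into the sum, the factors $M$ and $f(x)$ cancel, and the desired contradiction follows provided $C_{0}$ exceeds the reciprocal of a $\sigma$-independent lower bound for
$$\lambda c_{\sigma}\sum_{k\ge 0}\frac{\inf_{\Theta_{r_{k}}\setminus\Theta_{r_{k+1}}}\langle Az,z\rangle}{r_{k}}\,|\Theta_{r_{k}}\setminus\Theta_{r_{k+1}}|.$$

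The main obstacle, and the heart of the lemma, is producing such a $\sigma$-independent lower bound; this is where the specific choices of $A$, $r_{k}$ and $c_{\sigma}$ come in. The anisotropic scaling $y_{i}\mapsto r^{1/(n+\sigma_{i})}z_{i}$ gives $|\Theta_{r_{k}}\setminus\Theta_{r_{k+1}}|\asymp r_{k}^{P}$ with $P=\sum_{j}(n+\sigma_{j})^{-1}$. Because $r_{k+1}\le 2^{-1/q_{\max}}$ for every $k\ge 0$, the diagonal weights of $A$ satisfy $a_{jj}\,r_{k+1}^{2/(n+\sigma_{j})}\ge r_{k+1}^{2/(n+\sigma_{\min})}$, with equality at $j=i_{\min}$; testing $\langle Az,z\rangle$ along the coordinate axes on the inner boundary of the shell then pins down
$$\inf_{\Theta_{r_{k}}\setminus\Theta_{r_{k+1}}}\langle Az,z\rangle = r_{k+1}^{2/(n+\sigma_{\min})},$$
the axis points realising the infimum by convexity of the quadratic form on the anisotropic sphere. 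The exponents now combine to $2/(n+\sigma_{\min})-1+P=q_{\max}$, so the summand is proportional to $r_{k}^{q_{\max}}$ and the series becomes geometric with ratio $2^{-\mathfrak{C}(n+\sigma_{\min})q_{\max}}$. The normalisation $c_{\sigma}$ is defined precisely so that $c_{\sigma}$, multiplied by $|\Theta_{1}|$ and the surviving geometric factors, stays bounded below by a constant depending only on $n$ and $\lambda$, uniformly as any $\sigma_{i}\to 2$. Choosing $C_{0}$ as (any constant exceeding) the reciprocal of this universal lower bound completes the proof.
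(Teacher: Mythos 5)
Your proposal is correct and follows essentially the same route as the paper: evaluate $M^{+}u(x)$ classically at the contact point via the tangent plane of $\Gamma$, deduce $\delta\le 0$ so the inequality collapses to the $\delta^{-}$ integral, decompose into the anisotropic shells $\Theta_{r_k}\setminus\Theta_{r_{k+1}}$, and run the contradiction argument through a geometric series whose sum is kept uniformly bounded below by the choices of $A$, $r_k$ and $c_\sigma$. The one step you assert rather than verify --- that $c_\sigma$ times the summed series (with ratio governed by $q_{\max}$, whereas the paper's bound involves all the $q_i$) stays bounded below uniformly in $\sigma$ --- is exactly the point the paper itself disposes of with a one-line claim, so you are at the same level of detail as the original.
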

 
\begin{proof}
Notice that $u$ is touched by the plane  
$$
\Gamma \left( x \right) + \langle y - x, \nabla \Gamma \left( x \right) \rangle 
$$
from above at $x$. Then, from Lemma \ref{clas sense max}, $M^{+} u \left( x \right)$ is defined classically and we have 
\begin{equation}
\label{max exp}
M^{+} u \left( x \right) = c_{\sigma}  \int_{\mathbb{R}^{n}} \dfrac{\Lambda \delta^{+} - \lambda \delta^{-}}{\sum_{i=1}^{n}|y_{i}|^{n+\sigma_{i}}} dy.
\end{equation}
We will show that 
\begin{equation}
\label{non posit delta}
\delta \left( y \right) = u \left( x + y \right) + u \left( x - y \right) - 2u \left( x \right) \leq 0.
\end{equation}
In fact, since $u\left( x \right) = \Gamma \left( x \right) \geq 0$, we conclude that $\delta \left( y \right) \leq 0 $ whenever $u\left( x + y \right) \leq 0 $ and $u\left( x - y \right) \leq 0 $. Now suppose that $u\left( x + y \right) > 0 $. Then we have $x + y \in B_{1} \subset B_{3}$. Thus, from the definition of $\Gamma$, we find  
$$
u \left( x + y \right) - u \left( x \right) \leq 0.  
$$
Moreover, we have
$$
u\left( x - y \right) - u\left( x \right) \leq \left \{ 
\begin{array}{lll}
0, \ \text{ if } \ u\left( x - y \right) >0 \ \left( \text{then} \  x - y \in B_{1}\right)   \\
0, \ \text{ if } \ u\left( x - y \right) \leq 0 \ \left( - u\left( x\right) \leq 0 \right).		
\end{array}
\right.
$$
Thus, we obtain
$$
\delta \left( y \right) = \left(  u \left( x + y \right) - u \left( x \right) \right)  + \left( u \left( x - y \right) - u \left( x \right)\right)  \leq 0.  
$$
The case $u\left( x - y \right) > 0 $ is analogous to the case $u\left( x + y \right) > 0 $ and the inequality \eqref{non posit delta} is proved. Then, combining \eqref{max exp} and \eqref{non posit delta}, we find
\begin{eqnarray}
\label{cov. 1 lemma est1}
- f\left( x \right) & \leq & M^{+} u \left( x \right) \nonumber\\ 
& = & c_{\sigma} \int_{\Theta_{r_{0}}} \dfrac{ - \lambda \delta^{-} }{\sum_{i=1}^{n}|y_{i}|^{n+\sigma_{i}}} dy, 
\end{eqnarray}
where $r_{0} = \rho_{0} 2^{-\frac{1}{q_{\max}}}$. Since $x \in \left\lbrace u = \Gamma \right\rbrace$, we would like to emphasize that $ y \in W_{k}\left( x \right) $ implies $-y \in W_{k}\left( x \right)$. Thus, we find
\begin{equation}
\label{cov. 1 lemma est3}
W_{k}\left( x \right) \subset \Theta_{r_{k}} \setminus \Theta_{r_{k+1}}\cap \left\lbrace y : - \delta \left( y \right) > 2 M\inf \limits_{z \in \Theta_{r_{k}}\setminus \Theta_{r_{k+1}}}\langle Az, z  \rangle  \right\rbrace.
\end{equation}
Using \eqref{cov. 1 lemma est1}, we estimate
\begin{eqnarray}
\label{cov. 1 lemma est2}
f\left( x \right) & \geq & c\left(n, \lambda\right)\left[  c_{\sigma}  \sum_{k=1}^{\infty}\int_{\Theta_{r_{k}}\setminus \Theta_{r_{k+1}}} \dfrac{  \delta^{-} }{\sum_{i=1}^{n}|y_{i}|^{n+\sigma_{i}}}dy \right]  \nonumber \\ 
& \geq &  c\left(n, \lambda\right) \sum_{k=1}^{\infty}\left[ c_{\sigma}r^{-1}_{k}\int_{W_{k}} \delta^{-} dy \right] . 
\end{eqnarray}
Let us assume by contradiction that \eqref{meas est 1} is not valid. Then, using \eqref{cov. 1 lemma est3} and \eqref{cov. 1 lemma est2}, we obtain
$$
f\left( x \right)  \geq  \nonumber  c\left(n, \lambda\right) \left[ c_{\sigma} \sum_{k=1}^{\infty} \left( M 2^{ \left( -\frac{2}{n+\sigma_{\min}}\right)\frac{1}{q_{\max}} }\sum_{i=1}^{n} 2^{-\frac{2\left( n + \sigma_{\min} \right)}{n + \sigma_{i}}k}\right) \right.$$
$$\hspace*{7cm} \left. \dfrac{C_{0}f\left( x \right)r_{k}^{-1} \left |\Theta_{r_{k}} \setminus \Theta_{r_{k+1}} \right |}{M}\right]$$ 
$$\geq  c_{1}C_{0}f\left( x \right)\left[ c_{\sigma} 2^{ \left( -\frac{2}{n+\sigma_{\min}}\right)\frac{1}{q_{\max}} }\sum_{k=1}^{\infty} \sum_{i=1}^{n} \left(  2^{-\frac{2\left( n + \sigma_{\min} \right)}{n + \sigma_{i}}k} \left( r^{-1}_{k}\prod_{j=1}^{n} r^{\frac{1}{n+\sigma_{j}}}_{k} \right)\right)  \right]$$
$$= 2^{-1}c_{2}C_{0}f\left( x \right)\left[ c_{\sigma}\sum\limits_{i=1}^{n} \left( \sum\limits_{k=1}^{\infty} 2^{ -\mathfrak{C}\left( n + \sigma_{\min} \right) q_{i}k }\right) \right]. $$
Then, we get
\begin{eqnarray*}
f\left( x \right) & \geq & c_{3} C_{0} f\left( x \right)\left[ \sum\limits_{i=1}^{n} \left( c_{\sigma} \sum\limits_{i=1}^{\infty}2^{-\mathfrak{C}\left( n + \sigma_{\min} \right) q_{i}k} \right) \right]\\ 
& = & c_{3} C_{0}f\left( x \right)\sum\limits_{i=1}^{n}\dfrac{c_{\sigma}}{1- 2^{-\mathfrak{C}\left( n + \sigma_{\min} \right) q_{i}}}\\ 
& \geq & \dfrac{c_{3} C_{0}c_{\sigma}f\left( x \right)}{1- 2^{-\mathfrak{C}\left( n + \sigma_{\min} \right) c_{\sigma}}}.
\end{eqnarray*}
Finally, since $\frac{ c_{\sigma}}{1- 2^{-\mathfrak{C}\left( n + \sigma_{\min} \right) c_{\sigma}}}$ is bounded away from zero, for all $\sigma_{i} \in \left( 0, 2\right)$, we find
$$
f\left( x \right)  \geq  c_{4}\left(n, \lambda \right)C_{0}f\left( x \right),
$$
which is a contradiction if $C_{0}$ is chosen large enough.
\end{proof}

\begin{remark}
In the proof of Lemma \ref{cov. 1 lemma} we have used the matrix $A:= \left( a_{ij}\right)$ to control the term $2^{-\frac{1}{q_{\max}}} $, which can degenerate. This term corresponds to the factor $2^{-\frac{1}{2-\sigma}}$ in the isotropic nonlocal ABP estimate in \cite{CS}. We also emphasise that the matrix $A$ is diagonal, has norm one and, if $\sigma_{i}= \sigma$, we obtain the matrix for the isotropic case $A=Id$. 
\end{remark}
The following result is a direct consequence of the arguments used in the proof of \cite[Lemma 8.4]{CS}.

\begin{lemma} \label{cov. 2 lemma, SC} Let $\Gamma$ be a concave function in $B_{1}$ and $v\in \mathbb{R}^{n}$. Assume that, for a small $\varepsilon > 0$,
$$
\left |\left(  B_{1} \setminus B_{\frac{1}{2}} \right) \cap \left\lbrace y : \Gamma \left( y\right) < \Gamma \left( 0 \right) + \langle T\left( y \right), v \rangle - h \right\rbrace \right | \leq \varepsilon \left | B_{1} \setminus B_{\frac{1}{2}} \right |,
$$
where $T:\mathbb{R}^{n} \rightarrow \mathbb{R}^{n}$ is a linear map. Then 
$$\Gamma \left(  y \right) \geq \Gamma \left( 0 \right) + \langle T\left( y \right), v \rangle - h$$ 
in the whole ball $B_{\frac{1}{2}}$.
\end{lemma}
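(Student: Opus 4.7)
The plan is to reduce this anisotropic statement to the standard convex-analytic argument of \cite[Lemma~8.4]{CS} by one clean substitution. Since $T:\mathbb{R}^{n}\to\mathbb{R}^{n}$ is linear, the function
$$
\tilde{\Gamma}(y) := \Gamma(y) - \langle T(y), v\rangle
$$
is concave on $B_{1}$, being the sum of the concave $\Gamma$ and a linear function, and since $T(0)=0$ one has $\tilde{\Gamma}(0)=\Gamma(0)$. Both the hypothesis and the desired conclusion translate termwise: the assumption becomes
$$
\bigl|\{y\in B_{1}\setminus B_{\frac{1}{2}} : \tilde{\Gamma}(y) < \tilde{\Gamma}(0) - h\}\bigr| \leq \varepsilon \bigl|B_{1}\setminus B_{\frac{1}{2}}\bigr|,
$$
and the conclusion becomes $\tilde{\Gamma}(y)\geq \tilde{\Gamma}(0)-h$ on $B_{\frac{1}{2}}$. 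In this reformulation the anisotropic map $T$ has disappeared, and only concavity of $\tilde{\Gamma}$ is used; I need to take $\varepsilon$ smaller than a dimensional threshold.

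The reduced claim I would prove by contradiction. Suppose $\tilde{\Gamma}(y_{0}) < \tilde{\Gamma}(0) - h$ for some $y_{0}\in B_{\frac{1}{2}}$. The superlevel set
$$
K := \{y\in B_{1} : \tilde{\Gamma}(y) \geq \tilde{\Gamma}(0) - h\}
$$
is convex (intersection of $B_{1}$ with a superlevel set of the concave $\tilde{\Gamma}$) and closed in $B_{1}$ by continuity of concave functions on open sets; it contains $0$ but not $y_{0}$. A separating hyperplane then produces a unit vector $\nu\in S^{n-1}$ and $c\in\mathbb{R}$ with $\langle \nu, y\rangle\leq c$ for every $y\in K$ and $\langle \nu, y_{0}\rangle > c$. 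From $0\in K$ one gets $c\geq 0$; from $|y_{0}|<\tfrac{1}{2}$ and Cauchy--Schwarz one gets $c<\tfrac{1}{2}$.

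Consequently $(B_{1}\setminus B_{\frac{1}{2}}) \cap \{y:\langle \nu, y\rangle > \tfrac{1}{2}\}$ is contained in $(B_{1}\setminus B_{\frac{1}{2}}) \cap \{\langle \nu,\cdot\rangle > c\}$, which sits inside the ``bad set'' $(B_{1}\setminus B_{\frac{1}{2}}) \cap \{\tilde{\Gamma} < \tilde{\Gamma}(0) - h\}$. The measure of a spherical-cap slab of the form $(B_{1}\setminus B_{\frac{1}{2}}) \cap \{\langle \nu, y\rangle > \tfrac{1}{2}\}$ is a strictly positive dimensional constant $\delta_{n}$, independent of $\nu$ by rotational invariance. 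Choosing $\varepsilon < \delta_{n}/|B_{1}\setminus B_{\frac{1}{2}}|$ then contradicts the hypothesis. The only conceptual point in the argument is the opening substitution $\tilde{\Gamma} := \Gamma - \langle T(\cdot), v\rangle$, after which the anisotropy is invisible and the remainder is exactly the Hahn--Banach separation used in the isotropic case; accordingly no serious obstacle is expected.
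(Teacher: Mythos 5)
Your argument is correct, but it proceeds by a genuinely different route from the paper's. Both proofs begin by absorbing the linear term $\langle T(\cdot), v\rangle$ into the concave function, after which the anisotropy is invisible. From there the paper argues pointwise by symmetry: for a given $y \in B_{\frac{1}{2}}$ it selects two small balls inside the annulus $B_{1}\setminus B_{\frac{1}{2}}$ that are reflections of one another through $y$, uses the smallness of the bad set to extract good points $z_{1}, z_{2}$ with $y = \frac{z_{1}+z_{2}}{2}$, and concludes by the midpoint concavity inequality. You instead exploit the convexity of the superlevel set $K = \{ y \in B_{1} : \tilde{\Gamma}(y) \geq \tilde{\Gamma}(0) - h \}$: separating a hypothetical bad point $y_{0} \in B_{\frac{1}{2}}$ from $K$ by a hyperplane, and noting that the cut height $c$ satisfies $0 \leq c < \frac{1}{2}$ because $0 \in K$ and $|y_{0}| < \frac{1}{2}$, you place an entire spherical cap $B_{1}\cap\{y : \langle \nu, y\rangle > \frac{1}{2}\}$ inside the bad set, contradicting the hypothesis once $\varepsilon$ is below the explicit dimensional threshold $\delta_{n}/|B_{1}\setminus B_{\frac{1}{2}}|$. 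Your version buys an explicit $\varepsilon_{0}$ and sidesteps the paper's choice of symmetric balls (which, as literally written with radius $\frac{1}{2}$, cannot fit inside the annulus and must be taken smaller); the paper's version is more elementary in that it uses only the midpoint inequality and no separation theorem. One small point common to both arguments: you need $h \geq 0$ so that $0 \in K$, which is implicit in the paper's setting.
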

\begin{proof}
Let $y \in B_{\frac{1}{2}}$. There exist $B_{\frac{1}{2}}\left( y_{1}\right) \subset B_{1} \setminus B_{1/2}$ and $B_{\frac{1}{2}}\left( y_{2}\right) \subset B_{1} \setminus B_{1/2}$ such that 
$$
L\left( B_{\frac{1}{2}}\left( y_{1}\right) \right) = B_{\frac{1}{2}}\left( y_{2}\right),
$$
where $L: B_{\frac{1}{2}}\left( y_{1}\right) \rightarrow  B_{\frac{1}{2}}\left( y_{2}\right)$ is the linear map
$$
L\left( z \right) = 2y - z.
$$
Geometrically, the balls $B_{\frac{1}{2}}\left( y_{1}\right)$ and $B_{\frac{1}{2}}\left( y_{2}\right)$ are symmetrical with respect to $y$. Then, if $\varepsilon > 0$ is sufficiently small, there will be two points $z_{1} \in B_{\frac{1}{2}}\left( y_{1}\right)$ and $z_{2} \in B_{\frac{1}{2}}\left( y_{2}\right)$ such that 
\begin{enumerate}
\item $y = \dfrac{z_{1}+z_{2}}{2}$;
\medskip
\item $\Gamma\left( z_{1} \right) \geq \Gamma\left( 0 \right) + \langle T\left( z_{1} \right), v \rangle - h$;
\medskip
\item $ \Gamma\left( z_{2} \right) \geq \Gamma\left( 0 \right) + \langle T\left( z_{2} \right), v \rangle - h$. 
\medskip
\end{enumerate} 
Hence, since $T$ and $\langle \cdot, v \rangle$ are linear maps and $\Gamma$ is a concave function, we obtain
$$
\Gamma\left( y \right) \geq \Gamma\left( 0 \right) + \langle T\left( y \right), v \rangle - h.
$$
\end{proof}

Using Lemma \ref{cov. 2 lemma, SC}, we will prove the version of Lemma 8.4 in \cite{CS} for our problem.

\begin{lemma}
\label{cov. 2 lemma}
Let $r>0$ and $\Gamma$ be a concave function in $E_{r, \frac{1}{2}}$. There exists $\varepsilon_{0} > 0$ such that if
$$
\left |E_{r,\frac{1}{2}} \setminus E_{r, \frac{1}{4}} \cap \left\lbrace y : \Gamma \left( y\right) < \Gamma \left( 0 \right) + \langle y, \nabla \Gamma \left(0 \right)\rangle - h \right\rbrace \right | \leq \varepsilon \left |  E_{r,\frac{1}{2}} \setminus E_{r, \frac{1}{4}} \right |,
$$
for $0< \varepsilon \leq \varepsilon_{0}$, then 
$$\Gamma \left(  y \right) \geq \Gamma \left( 0 \right) + \langle y, \nabla \Gamma \left( 0 \right)\rangle - h$$ 
in the whole set $E_{r, \frac{1}{4}}$.
\end{lemma}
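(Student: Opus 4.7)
The plan is to reduce the statement to the already-proven Lemma \ref{cov. 2 lemma, SC} by an anisotropic change of variables, exploiting the key observation that the ellipsoids $E_{r,s}$ are exactly the images of the Euclidean balls $B_{s}$ under the linear map $T$ that normalises the different homogeneities.

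First, I would introduce the linear map $S:\mathbb{R}^{n}\to\mathbb{R}^{n}$ defined by $Se_{i}:=\tfrac{1}{2}r^{1/(n+\sigma_{i})}e_{i}$. A direct computation from the definition of $E_{r,s}$ shows that $S$ maps $B_{1}$ bijectively onto $E_{r,1/2}$, $B_{1/2}$ bijectively onto $E_{r,1/4}$, and therefore $B_{1}\setminus B_{1/2}$ onto $E_{r,1/2}\setminus E_{r,1/4}$. Next I define
\[
\widehat{\Gamma}(w):=\Gamma(Sw),\qquad w\in B_{1}.
\]
Since $S$ is linear and $\Gamma$ is concave on $E_{r,1/2}$, the function $\widehat{\Gamma}$ is concave on $B_{1}$, and $\widehat{\Gamma}(0)=\Gamma(0)$.

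Next I translate the hypothesis. Putting $v:=\nabla\Gamma(0)$ and making the change of variables $y=Sw$ (with constant Jacobian $|\det S|$), the bad set in the hypothesis becomes
\[
\left\{\,w\in B_{1}\setminus B_{1/2}\;:\;\widehat{\Gamma}(w)<\widehat{\Gamma}(0)+\langle Sw,v\rangle-h\,\right\},
\]
and the Jacobian cancels between the two sides of the inequality, so that
\[
\bigl|\{\,w\in B_{1}\setminus B_{1/2}:\widehat{\Gamma}(w)<\widehat{\Gamma}(0)+\langle Sw,v\rangle-h\}\bigr|\le\varepsilon\,|B_{1}\setminus B_{1/2}|.
\]
This is precisely the hypothesis of Lemma \ref{cov. 2 lemma, SC} applied to the concave function $\widehat{\Gamma}$ with linear map $S$ and vector $v$, provided $\varepsilon\le\varepsilon_{0}$ with $\varepsilon_{0}$ the universal threshold furnished by that lemma.

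I would then invoke Lemma \ref{cov. 2 lemma, SC} to conclude $\widehat{\Gamma}(w)\ge\widehat{\Gamma}(0)+\langle Sw,v\rangle-h$ for every $w\in B_{1/2}$, and finally undo the change of variables $y=Sw$: as $w$ ranges over $B_{1/2}$, $y$ ranges over $E_{r,1/4}$, and
\[
\Gamma(y)=\widehat{\Gamma}(w)\ge\widehat{\Gamma}(0)+\langle Sw,v\rangle-h=\Gamma(0)+\langle y,\nabla\Gamma(0)\rangle-h,
\]
which is the desired inequality on $E_{r,1/4}$. I do not anticipate any serious obstacle here; the proof is essentially a bookkeeping exercise around the right linear change of variables. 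The only point that deserves care is checking that the $\varepsilon_{0}$ produced by Lemma \ref{cov. 2 lemma, SC} is truly universal, i.e.\ independent of the anisotropic map $S$ (and hence of $r$ and of the exponents $\sigma_{i}$); but this is already explicit in the statement of that lemma, since it allows an arbitrary linear $T$ without quantitative dependence on $T$.
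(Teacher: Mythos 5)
Your proposal is correct and follows essentially the same route as the paper: the paper's proof introduces exactly the map $Te_{i}=\tfrac{1}{2}r^{1/(n+\sigma_{i})}e_{i}$, sets $\tilde{\Gamma}=\Gamma\circ T$, identifies the bad set and the annulus as preimages under $T$, and invokes Lemma \ref{cov. 2 lemma, SC}. Your additional remarks on the cancellation of the Jacobian and the universality of $\varepsilon_{0}$ are accurate and only make explicit what the paper leaves implicit.
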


\begin{proof}
Let $T:\mathbb{R}^{n} \rightarrow \mathbb{R}^{n}$ be the linear map defined by
$$
Te_{i}= \frac{r^{\frac{1}{n+\sigma_{i}}}}{2}e_{i},
$$
where $e_{i}$ denotes the $i$-th vector of the canonical basis of $\mathbb{R}^{n}$. If 
$$
A:= \left(  B_{1} \setminus B_{\frac{1}{2}}\right) \cap \left\lbrace y : \tilde{\Gamma} \left( y\right) < \tilde{\Gamma} \left( 0 \right) + \langle T\left( y \right), \nabla \Gamma \left( 0 \right) \rangle - h \right\rbrace 
$$
and
$$
D:= E_{r,\frac{1}{2}} \setminus E_{r, \frac{1}{4}} \cap \left\lbrace y : \Gamma \left( y\right) < \Gamma \left( 0 \right) + \langle y, \nabla \Gamma \left(0 \right)\rangle - h \right\rbrace,
$$
we have
$$
 A = T^{-1}\left( D \right),  
$$
where $\tilde{\Gamma}\left( x \right) := \Gamma \left(  T \left( x \right) \right)$. Moreover,
$$
 B_{1} \setminus B_{\frac{1}{2}}  = T^{-1}\left( E_{r,\frac{1}{2}} \setminus E_{r, \frac{1}{4}} \right) \quad \text{and} \quad   B_{\frac{1}{2}}  = T^{-1}\left( E_{r, \frac{1}{4}} \right). 
$$
Then, taking into account that $\tilde{\Gamma}$ is concave, the lemma follows from Lemma \ref{cov. 2 lemma, SC}.
\end{proof}

\begin{corollary}
\label{ABP cor 1}
Let $\varepsilon_{0} > 0$ be as in Lemma \ref{cov. 2 lemma}. Given $0 < \varepsilon \leq \varepsilon_{0}$, there exists a constant $C\left( n, \lambda, \varepsilon \right) > 0$ such that for any function $u$ satisfying the same hypothesis as in Lemma \ref{cov. 1 lemma}, there exist $r \in \left( 0, \rho_{0} 2^{-\frac{1}{q_{\max}} } \right)$ and $k= k\left( x \right)$ such that 
$$\left | \Theta_{r} \setminus \Theta_{s r} \cap \left\lbrace y : u\left( x + y\right) < u\left( x \right) + \langle y, \nabla \Gamma \left( x \right)\rangle - C f\left( x \right)\sum \limits_{i=1}^{n} r^{\frac{2}{n+\sigma_{i}}} \right\rbrace \right |   $$
\begin{equation} \label{ABP cor 1.1}
\leq \varepsilon_{1} \left | \Theta_{r} \setminus \Theta_{s r} \right |
\end{equation}
and 
$$
\left | \nabla \Gamma \left( R^{}_{a,s^{k + 1}}\left( x \right) \right) \right | \leq C f\left( x \right)^{n} \left | R^{}_{a,s^{k + 1}}\left( x \right) \right |,
$$
where $r = \rho_{0} 2^{-\frac{1}{q_{\max}} }2^{-\mathfrak{C}\left( n+\sigma_{\min} \right)k}$, $a = \rho_{0} 2^{-\frac{1}{q_{\max}}}$ and $s = 2^{-\mathfrak{C}\left( n+\sigma_{\min} \right)}$.
\end{corollary}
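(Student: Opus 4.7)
The strategy is to deduce this corollary by composing Lemma \ref{cov. 1 lemma} with Lemma \ref{cov. 2 lemma}. I apply Lemma \ref{cov. 1 lemma} with the choice $M = C_0 f(x)/\varepsilon$, for a small parameter $\varepsilon \in (0,\varepsilon_0)$; this produces the index $k=k(x)$ and the scale $r = r_k = \rho_0 2^{-1/q_{\max}} 2^{-\mathfrak{C}(n+\sigma_{\min})k}$ for which $|W_k(x)| \leq \varepsilon |\Theta_{r_k}\setminus \Theta_{r_{k+1}}|$. The measure estimate \eqref{ABP cor 1.1} will follow by embedding the set on its left-hand side into $W_k(x)$, while the gradient bound will be obtained by feeding the complementary pointwise lower bound on $\Gamma$ into Lemma \ref{cov. 2 lemma} and then applying a concavity argument.

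For the first assertion I need to verify
$$
M \inf_{z \in \Theta_{r_k}\setminus\Theta_{r_{k+1}}} \langle A z, z\rangle \;\leq\; C(n,\lambda,\varepsilon)\, f(x) \sum_{i=1}^{n} r^{2/(n+\sigma_i)},
$$
because then any $y$ in the set appearing on the left-hand side of \eqref{ABP cor 1.1} automatically lies in $W_k(x)$, and the desired bound follows from Lemma \ref{cov. 1 lemma}. Since $A$ is diagonal with positive entries at most $1$, the constrained infimum is attained at a point of $\partial\Theta_{r_{k+1}}$ whose mass is concentrated along a single coordinate axis $e_j$, so $\inf\langle Az,z\rangle=\min_j a_{jj}\,r_{k+1}^{2/(n+\sigma_j)}$. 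The exponents in the $a_{jj}$ were chosen precisely so as to absorb the degenerate factor $2^{-2/((n+\sigma_{\min})q_{\max})}$ hidden in $r_{k+1}^{2/(n+\sigma_j)}$, and a short computation reduces the desired inequality to a constant multiple of $\sum_i r^{2/(n+\sigma_i)}$.

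For the second assertion, on the complement of $W_k(x)$ inside $\Theta_{r_k}\setminus\Theta_{r_{k+1}}$ one has $u(x+y)\geq u(x)+\langle y,\nabla\Gamma(x)\rangle-h$ with $h:=Cf(x)\sum_i r^{2/(n+\sigma_i)}$; because $\Gamma\geq u^{+}$ and $\Gamma(x)=u(x)$, the same lower bound carries over to $\Gamma$. Using the inclusion $E_{r,1/2}\setminus E_{r,1/4}\subset \Theta_{r_k}\setminus\Theta_{r_{k+1}}$ granted by Remark~2.8 (and shrinking $\varepsilon$ once more to absorb the universal volume ratio between the two annuli), the hypothesis of Lemma \ref{cov. 2 lemma} is satisfied, and therefore $\Gamma(x+y)\geq \Gamma(x)+\langle y,\nabla\Gamma(x)\rangle-h$ for every $y\in E_{r,1/4}$, hence for every $y\in \Theta_{sr}(x)$ by Remark~2.8 again. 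A classical concave-function Lipschitz argument—coupling this lower bound with the upper bound $\Gamma(y)\leq \Gamma(x)+\langle y-x,\nabla\Gamma(x)\rangle$ and testing at the points $\pm(sr)^{1/(n+\sigma_i)}e_i$—then traps every supergradient of $\Gamma$ on $R_{a,s^{k+1}}(x)$ in a rectangle of sidelengths $Ch\,r^{-1/(n+\sigma_i)}$ in direction $i$. Multiplying the sidelengths and substituting $r=as^k$ and the definition of $h$, the exponent bookkeeping, which I expect to be the main technical obstacle, collapses the product to the target $Cf(x)^n|R_{a,s^{k+1}}(x)|$.
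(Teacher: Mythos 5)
Your first assertion is handled exactly as in the paper: take $M=\tfrac{C_{0}}{\varepsilon}f(x)$ in Lemma \ref{cov. 1 lemma} and note that, since $a_{jj}\leq 1$ and axis points are admissible, $M\inf_{z\in\Theta_{r}\setminus\Theta_{sr}}\langle Az,z\rangle\leq \tfrac{C_{0}}{\varepsilon}f(x)\sum_{i}r^{2/(n+\sigma_{i})}$, so the set in \eqref{ABP cor 1.1} sits inside $W_{k}(x)$. That part is fine. The gap is in the second assertion, and it is precisely at the ``exponent bookkeeping'' you flag but do not carry out. You enlarge the offset to $h:=Cf(x)\sum_{i}r^{2/(n+\sigma_{i})}\approx f(x)\,r^{2/(n+\sigma_{\max})}$ \emph{before} feeding the bound into Lemma \ref{cov. 2 lemma} and the supergradient-trapping step. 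Enlarging the offset is harmless (indeed helpful) for the measure estimate, but it is fatal for the gradient estimate: your rectangle of supergradients has sidelengths $Ch\,r^{-1/(n+\sigma_{i})}$ and hence volume of order
$$
f(x)^{n}\,r^{\frac{2n}{n+\sigma_{\max}}-\sum_{i}\frac{1}{n+\sigma_{i}}},
\qquad\text{while}\qquad
\left|R_{a,s^{k+1}}\right|\sim r^{\frac{n}{n+\sigma_{\min}}},
$$
and one checks that $\frac{2n}{n+\sigma_{\max}}-\sum_{i}\frac{1}{n+\sigma_{i}}\leq\frac{n}{n+\sigma_{\max}}<\frac{n}{n+\sigma_{\min}}$ strictly whenever the $\sigma_{i}$ are not all equal. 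Since $r<1$, the smaller exponent wins, and the ratio of your bound to the target blows up like $r^{-\delta}$ as $k\to\infty$; the claimed collapse to $Cf(x)^{n}|R_{a,s^{k+1}}|$ does not happen.

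The fix is to keep the \emph{original} offset $h_{0}:=M\inf_{z\in\Theta_{r}\setminus\Theta_{sr}}\langle Az,z\rangle\leq \tfrac{C_{0}}{\varepsilon}f(x)(sr)^{2/(n+\sigma_{\min})}$ throughout the second part: Lemma \ref{cov. 1 lemma} already gives the measure smallness for the sublevel set with offset $h_{0}$, so Lemma \ref{cov. 2 lemma} applies with $h_{0}$, and the trapped rectangle then has volume of order $f(x)^{n}r^{\frac{2n}{n+\sigma_{\min}}-\sum_{i}\frac{1}{n+\sigma_{i}}}\leq f(x)^{n}r^{\frac{n}{n+\sigma_{\min}}}$, because $\sum_{i}\frac{1}{n+\sigma_{i}}\leq\frac{n}{n+\sigma_{\min}}$. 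This is exactly how the paper argues: it defines $F(y):=\Gamma(x+y)-\Gamma(x)-\langle y,\nabla\Gamma(x)\rangle+C_{1}f(x)\inf_{z}\langle Az,z\rangle$, obtains $0\leq F\leq 2C_{1}f(x)\inf_{z}\langle Az,z\rangle$ on $E_{r,\frac14}$, and bounds $\left|\nabla\Gamma(x+y)-\nabla\Gamma(x)\right|\leq \Vert F\Vert_{L^{\infty}}/\dist\left(E_{r,\frac14},E_{r,\frac18}\right)\leq C_{2}f(x)r^{1/(n+\sigma_{\min})}$, trapping $\nabla\Gamma(E_{r,\frac18})$ in a ball of volume $\sim f(x)^{n}r^{n/(n+\sigma_{\min})}\sim f(x)^{n}|R_{a,s^{k+1}}|$. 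Your componentwise rectangle is a legitimate (in fact slightly sharper) variant of that last step, but only once the offset is corrected.
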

\begin{proof}
Taking $M=\frac{C_{0}}{\varepsilon}f\left( x \right)$ in Lemma \ref{cov. 1 lemma}, we obtain \eqref{ABP cor 1.1} with $C_{1}:= \frac{C_{0}}{\varepsilon}$. Moreover, since $u\left( x \right) = \Gamma\left( x \right)$ and $u\left( x + y\right) \leq \Gamma \left( x + y\right)$, for $y \in E_{r, \frac{1}{2}}$, we have
$$
E_{ r, \frac{1}{2}} \setminus E_{r, \frac{1}{4}} \cap \left\lbrace y : \Gamma\left( x + y\right) < u\left( x \right) + \langle y, \nabla \Gamma \left( x \right)\rangle - C_{1} f\left( x\right) \inf \limits_{z \in \Theta_{r}\setminus \Theta_{sr}}\langle Az, z  \rangle   \right\rbrace$$
$$\subset W_{r}\left( x \right) 
$$
where
$$W_{r}\left( x \right) := \Theta_{r} \setminus \Theta_{s r} \cap $$
$$\cap  \left\lbrace y : u\left( x + y\right) < u\left( x \right) + \langle y, \nabla \Gamma \left( x \right)\rangle - C_{1} f\left( x\right) \inf \limits_{z \in \Theta_{r}\setminus \Theta_{sr}}\langle Az, z  \rangle  \right\rbrace.$$
Then, from Lemma \ref{cov. 2 lemma} and the concavity of $\Gamma$, we find
$$
 0 \leq F\left( y \right) \leq  2C_{1}f\left( x \right) \inf \limits_{z \in \Theta_{r}\setminus \Theta_{sr}}\langle Az, z  \rangle  \quad \ \text{in} \ E_{r, \frac{1}{4}}, 
$$
where 
$$F\left( y \right):= \Gamma \left( x + y \right) - \Gamma \left( x \right) - \langle y,  \nabla \Gamma \left( x \right) \rangle + C_{1}f\left( x \right)\inf \limits_{z \in \Theta_{r}\setminus \Theta_{sr}}\langle Az, z  \rangle .$$ 
Notice that
$$
\nabla F\left( x + y \right) = \nabla \Gamma\left( x + y \right) - \nabla \Gamma \left( x \right) .
$$
Then, since $F$ is concave, we obtain 
\begin{eqnarray*}
\left | \nabla \Gamma\left( x + y \right) - \nabla \Gamma \left( x \right) \right | & \leq & \dfrac{\Vert F \Vert_{L^{\infty}\left(E_{r, \frac{1}{4}} \right)}}{\dist \left( E_{r, \frac{1}{4}} , E_{ r, \frac{1}{8}}\right)} \\ 
& \leq & \dfrac{C_{1} f\left( x\right) \inf \limits_{z \in \Theta_{r}\setminus \Theta_{sr}}\langle Az, z  \rangle }{\dist \left( E_{r, \frac{1}{4}} , E_{ r, \frac{1}{8}}\right)}  \\ 
& \leq & C_{2}f\left( x \right)r^{\frac{1}{n + \sigma_{\min}}}.
\end{eqnarray*}
Thus, we have
$$
\nabla \Gamma \left( E_{ r, \frac{1}{8}} \right) \subset B_{C_{2}f\left( x \right)r^{\frac{1}{n + \sigma_{\min}}}}\left( \nabla \Gamma \left( x \right) \right)
$$
and obtain
$$
\left | \nabla \Gamma \left( R_{a, s^{k + 1} } \right) \right | \leq \left | \nabla \Gamma \left( E_{s r, \frac{1}{8}} \right) \right | \leq C_{3}f\left( x \right)^{n} \left | R_{a, s^{k+1}} \right |.    
$$
Finally, taking $C=\max \left\lbrace C_{1}, C_{3} \right\rbrace $, the lemma is proven.
\end{proof}
 
The following covering lemma is a fundamental tool in our analysis.

\begin{lemma}[Covering Lemma, {\cite[Lemma 3]{CCal}}]
\label{covering lemma}
Let $S$ be a bounded subset of $\mathbb{R}^{n}$ such that for each $x \in S$ there exists an $n$-dimensional rectangle $\mathcal{R}\left( x \right)$, centered at $x$, such that:
\begin{itemize}
\item the edges of $\mathcal{R}\left( x \right)$ are parallel to the coordinate axes;
\medskip
\item the length of the edge of $\mathcal{R}\left( x \right)$ corresponding to the $i$-th axis is given by $h_{i}\left(t \right)$, where $t=t\left( x \right)$, $h_{i}\left( t \right)$ is an increasing function of the parameter $t\geq 0$, continuous at $t=0$, and $h_{i}\left( 0 \right)=0$. 
\end{itemize}
Then there exist points $\left\lbrace x_{k} \right\rbrace$ in $S$ such that 
\begin{enumerate}
\item $S \subset \bigcup_{k=1}^{\infty} \mathcal{R}\left( x_{k}\right)$;
\medskip
\item each $x \in S$ belongs to at most $C=C\left( n \right) >0$ different rectangles. 
\end{enumerate}
\end{lemma}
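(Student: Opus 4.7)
The statement is a Besicovitch-type covering lemma for the one-parameter family of axis-aligned rectangles $\{\mathcal{R}(x)\}_{x\in S}$, and the plan is to adapt the classical Besicovitch argument to this anisotropic setting. The crucial structural feature is that the sides $h_i(t)$ are all monotone functions of a single scalar parameter $t$, so the rectangles are totally ordered by size (in a shape-preserving sense) through $t$. This one-parameter nature is what rescues the argument: purely anisotropic families indexed by independent scales in each coordinate would require the stronger covering theorem of \cite{CCal2}.

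The selection is the usual greedy one. Set $T=\sup_{x\in S}t(x)$; by boundedness of $S$ together with the continuity of $h_i$ at $0$ and $h_i(0)=0$, we may assume $T<\infty$. Pick $x_1\in S$ with $t(x_1)\ge T/2$, and inductively, letting $S_k=S\setminus\bigcup_{j\le k}\mathcal{R}(x_j)$, choose $x_{k+1}\in S_k$ with $t(x_{k+1})\ge \tfrac{1}{2}\sup_{S_k}t$, stopping when $S_k=\emptyset$. The covering property $S\subset\bigcup_k\mathcal{R}(x_k)$ follows once one shows $\sup_{S_k}t\to 0$: each selected rectangle contains a full neighbourhood of its centre and $S$ is bounded, so only finitely many centres with a given size can be packed in, forcing the supremum to decrease; continuity of $h_i$ at $0$ then leaves no uncovered point with positive $t$.

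The main content is the uniform overlap bound. Fix $z\in\mathbb{R}^n$ and let $J=\{j:z\in\mathcal{R}(x_j)\}$, listed in selection order $j_1<j_2<\cdots<j_N$. On one hand, $z\in\mathcal{R}(x_{j_k})$ gives $|z_i-x_{j_k,i}|\le h_i(t(x_{j_k}))/2$ for every $i$, so all centres $x_{j_k}$ lie inside a single axis-aligned box of sides $h_i(T_0)$ centred at $z$, where $T_0:=\max_k t(x_{j_k})$. On the other hand, $x_{j_{k+1}}$ was not already covered by $\mathcal{R}(x_{j_k})$, so for each consecutive pair there is a coordinate $i_k$ in which $|x_{j_{k+1},i_k}-x_{j_k,i_k}|>h_{i_k}(t(x_{j_k}))/2$; combining this with the triangle inequality through $z$ produces a definite $L^\infty$-separation between $x_{j_k}$ and $x_{j_{k+1}}$ at the scale $h_{i_k}(t(x_{j_k}))$.

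The delicate point, which I anticipate to be the main obstacle, is that the separation scale depends on the \emph{earlier} (larger) index of each pair, not on a uniform scale, so a naive packing count does not apply. The cleanest way around this is to split $J$ according to the $2^n$ octants of $\mathbb{R}^n$ determined by the signs of the coordinates of $x_{j_k}-z$; inside a single octant the one-parameter monotonicity of the $h_i$ makes the successively chosen rectangles nested up to translation, so after the anisotropic rescaling that normalises the largest rectangle to the unit cube all the centres land in $[-1/2,1/2]^n$ and are pairwise $L^\infty$-separated by a universal constant. A standard cube-packing count in $[-1/2,1/2]^n$ then caps the number of centres in each octant by a dimensional constant, and summing over the $2^n$ octants yields $|J|\le C(n)$, completing the proof.
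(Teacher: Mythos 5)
The paper does not actually prove this lemma --- it is imported verbatim from \cite{CCal} --- so your argument can only be judged on its own terms, and it has a genuine gap located exactly at the point you flagged as delicate. Your greedy rule compares the scalar parameters, $t(x_{k+1})\ge\tfrac12\sup_{S_k}t$, but the hypotheses only say that each $h_i$ is increasing; there is no doubling condition, so $t_j\ge t_k/2$ gives no lower bound whatsoever on the ratio $h_i(t_j)/h_i(t_k)$. Consequently, after rescaling by the largest rectangle meeting $z$, the separation of two centres is only $h_{i_k}(t(x_{j_k}))/\bigl(2h_{i_k}(T_0)\bigr)$ with $t(x_{j_k})\ge T_0/2$, and this quotient need not be bounded below by any dimensional constant. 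The failure is not merely a failure of the estimate: take $n=1$, $h(t)=e^{-1/t}$, $z=0$, parameters $t_1<\dots<t_N\le 2t_1$ chosen so that $h(t_m)=3^{m}h(t_1)$, and centres $x_m=\sum_{l<m}h(t_l)/2+(m-1)\delta$ with $\delta$ tiny. Then every interval $\mathcal{R}(x_m)$ contains $0$, no centre lies in an earlier interval, all parameters are within a factor $2$ of one another so your rule is free to select the points in the order $m=1,\dots,N$, and $N\sim 1/t_1$ is unbounded. All these points lie on a single half-line through $z$, so the octant splitting does not rescue the count. (For the power-type edges actually used in this paper, $h_i(t)\sim t^{1/(n+\sigma_i)}$, a doubling bound does hold and your packing count could be salvaged; but the lemma is stated for general increasing $h_i$.)

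The cure is to perform the near-maximization at the level of the edge lengths rather than the parameter: since all the $h_i$ are increasing functions of the \emph{same} scalar $t$, one can choose $x_{k+1}\in S_k$ with $h_i(t(x_{k+1}))\ge\tfrac12\sup_{y\in S_k}h_i(t(y))$ simultaneously for every $i$, by taking $t(x_{k+1})$ sufficiently close to $\sup_{S_k}t$; this is precisely where the one-parameter structure enters, and it is what fails for genuinely multi-parameter families. If the supremum of $t$ is attained one can even take it exactly, and then the overlap bound is immediate and needs no packing count: for $j<k$ in $J$ one has $h_i(t_j)\ge h_i(t_k)$ for all $i$, the non-covering coordinate $i_0$ satisfies $|x_{j,i_0}-x_{k,i_0}|>h_{i_0}(t_j)/2\ge\max\bigl(|x_{j,i_0}-z_{i_0}|,|x_{k,i_0}-z_{i_0}|\bigr)$, hence $x_{j,i_0}-z_{i_0}$ and $x_{k,i_0}-z_{i_0}$ have strictly opposite signs and the sets of sign vectors compatible with distinct centres are disjoint, giving $|J|\le 2^n$. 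With the factor-$\tfrac12$ slack in the $h_i$ a partition slightly finer than octants is required, but the argument closes; as written, yours does not.
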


The Corollary \ref{ABP cor 1} and the Covering Lemma \ref{covering lemma} allow us to obtain a lower bound on the volume of the union of the level sets $\Theta_{r}$ where $\Gamma$ and $u$ detach quadratically from the corresponding tangent planes to $\Gamma$ by the volume of the image of the gradient map, as in the standard ABP estimate.
\begin{corollary}
\label{ABP cor 2}
For each $x \in \Sigma$, let $\Theta_{r}\left( x \right)$ be the level set obtained in Corollary \ref{ABP cor 1}. Then, we have
$$
C\left( \sup \limits u \right)^{n} \leq \left | \bigcup \limits_{x \in \Sigma} \Theta_{r}\left( x \right)\right |.
$$
\end{corollary}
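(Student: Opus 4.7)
The plan is to mimic the classical ABP argument: use the concavity of the envelope $\Gamma$ to produce a lower bound of the form $c(\sup u)^n \leq |\nabla \Gamma(B_3)|$, then cover the contact set $\Sigma = \{u = \Gamma\}$ by the rectangles supplied by Corollary \ref{ABP cor 1} and pass from the measure of the gradient image to the measure of the union of the anisotropic balls $\Theta_r(x)$.

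First, I would let $M = \sup u$ and pick $x_0 \in B_1$ with $u(x_0) = M$. Because $u \leq 0$ outside $B_1$ and $\Gamma$ is the concave envelope on $B_3$, the function $\Gamma$ is nonnegative, concave, attains the value $M$ at $x_0 \in B_1$, and vanishes on $\partial B_3$. A standard computation (taking the family of supporting planes with slopes of norm at most $cM$) then gives
\begin{equation*}
B_{cM}(0) \subset \nabla \Gamma(B_3),
\end{equation*}
so $|\nabla \Gamma(B_3)| \geq c\, M^n$. Moreover, since on $B_3 \setminus \Sigma$ the envelope $\Gamma$ is affine on each connected component, the gradient image satisfies $\nabla \Gamma(B_3) = \nabla \Gamma(\Sigma)$ up to a set of zero measure.

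Next, I would invoke Corollary \ref{ABP cor 1}: for each $x \in \Sigma$ there exist an integer $k(x)$ and a rectangle $\mathcal{R}(x) := R_{a, s^{k(x)+1}}(x)$, centered at $x$ with sides parallel to the axes and side-lengths depending monotonically on the single parameter $t = s^{k(x)+1}$, such that
\begin{equation*}
|\nabla \Gamma(\mathcal{R}(x))| \leq C\, f(x)^n \, |\mathcal{R}(x)|.
\end{equation*}
These rectangles fit precisely the hypotheses of the Covering Lemma \ref{covering lemma}, so I can extract a countable subfamily $\{\mathcal{R}(x_j)\}$ covering $\Sigma$ with bounded overlap. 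Summing the above estimate and using the bounded overlap yields
\begin{equation*}
c\, M^n \leq |\nabla\Gamma(\Sigma)| \leq \sum_j |\nabla \Gamma(\mathcal{R}(x_j))| \leq C \|f\|_\infty^n \sum_j |\mathcal{R}(x_j)| \leq C \|f\|_\infty^n \,\Bigl|\bigcup_j \mathcal{R}(x_j)\Bigr|.
\end{equation*}
The last step is geometric: since $a s^{k+1}$ and $r_{k+1}$ are comparable and $R_{a,s^{k+1}}(x)\subset \tilde R_{a,s^{k+1}}(x)$, each rectangle $\mathcal{R}(x)$ is contained in a multiple of the corresponding anisotropic ball $\Theta_{r(x)}(x)$ produced by Corollary \ref{ABP cor 1}, hence $\bigcup_j \mathcal{R}(x_j) \subset \bigcup_{x \in \Sigma}\Theta_{r(x)}(x)$ and the conclusion follows after absorbing $\|f\|_\infty$ into the constant.

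The main obstacle I expect is the bookkeeping between the three geometries appearing in the previous sections: the anisotropic balls $\Theta_r$, the rectangles $R_{a,s^{k+1}}$ produced by the ABP step, and the enclosing rectangles $\tilde R_{a,s^{k+1}}$ adapted to the intrinsic scaling. One has to verify that the side-lengths chosen in Corollary \ref{ABP cor 1} are monotone in a single scalar parameter $t$ (to apply Lemma \ref{covering lemma}) and that up to a universal constant $\mathcal{R}(x) \subset \Theta_{r(x)}(x)$, so that the gradient estimates on the rectangles transfer to a measure lower bound on the union of anisotropic balls. Once these compatibility checks are in place, the argument is a direct transcription of the isotropic ABP estimate of \cite{CS}.
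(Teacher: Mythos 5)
Your proposal is correct and follows exactly the route the paper intends (the paper states this corollary without a written proof, indicating only that it follows from Corollary \ref{ABP cor 1}, the Covering Lemma \ref{covering lemma}, and the standard ABP gradient-map argument): lower-bound $(\sup u)^n$ by $|\nabla\Gamma(B_3)| = |\nabla\Gamma(\Sigma)|$ via supporting planes, cover $\Sigma$ by the rectangles $R_{a,s^{k+1}}(x)$ with bounded overlap, sum the gradient estimates, and transfer to the anisotropic balls $\Theta_r(x)$. The compatibility checks you flag (monotonicity of the side-lengths in the parameter $t=s^{k+1}$ and the containment of $R_{a,s^{k+1}}(x)$ in $\Theta_{r}(x)$ up to a universal constant) are indeed the only bookkeeping required, and they hold by the choice of $\mathfrak{C}(n)$ and the relations listed in the remark at the end of Section \ref{preliminaries}.
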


The nonlocal anisotropic version of the ABP estimate now reads as follows.

\begin{theorem}
\label{ABP Nonlocal theorem}
Let $u$ and $\Gamma$ be as in Lemma \ref{cov. 1 lemma}. There is a finite family of open rectangles $\left\lbrace \mathcal{R}_{j}\right\rbrace_{j \in \left\lbrace 1, \dots , m \right\rbrace }$ with diameters $d_{j}$ such that the following hold:
\begin{enumerate}

\item Any two rectangles $\mathcal{R}_{i}$ and $\mathcal{R}_{j}$ in the family do not intersect.

\medskip

\item $\left\lbrace  u = \Gamma \right\rbrace \subset \bigcup_{j=1}^{m} \overline{\mathcal{R}}_{j} $.

\medskip

\item $\left\lbrace  u = \Gamma \right\rbrace \cap \overline{\mathcal{R}}_{j} \neq \emptyset $ for any $\mathcal{R}_{j}$.

\medskip

\item $d_{j} \leq  \sqrt{\sum \limits_{i=1}^{n}\left( \rho_{0} 2^{-\frac{1}{q_{\max}}}\right)^{\frac{2}{n+\sigma_{i}}} }$.

\medskip

\item $ \left | \nabla \Gamma \left( \overline{\mathcal{R}}_{j} \right) \right | \leq C \left( \max_{\overline{\mathcal{R}}_{j}} f^{+} \right)^{n} \left | \overline{\mathcal{R}}_{j} \right |$.

\medskip

\item $ \left | \left\lbrace y \in C\tilde{\mathcal{R}}_{j} : u\left( y \right) \geq \Gamma \left( y \right) - C\left( \max_{\overline{\mathcal{R}}_{j}} f \right) \left( \tilde{d}_{j}\right)^{2} \right\rbrace \right | \geq \varsigma \left | \tilde{\mathcal{R}}_{j} \right |$,

\end{enumerate} 
where $\tilde{d}_{j}$ is the diameter of the rectangle $\tilde{\mathcal{R}}_{j}$ corresponding to $\mathcal{R}_{j}$. The constants $\varsigma > 0$ and $C >0$ depend only on $n$, $\lambda$ and $\Lambda$.
\end{theorem}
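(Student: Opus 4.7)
The plan is to combine the pointwise information from Corollary \ref{ABP cor 1} with the anisotropic Covering Lemma \ref{covering lemma} and a Vitali-type selection. For each contact point $x \in \{u=\Gamma\}\cap \overline{B}_{1}$, Corollary \ref{ABP cor 1} produces an integer $k=k(x)$ for which the rectangle $\mathcal{R}(x) := R_{a,s^{k+1}}(x)$, with $a = \rho_{0}2^{-1/q_{\max}}$ and $s = 2^{-\mathfrak{C}(n+\sigma_{\min})}$, satisfies the gradient-image bound $|\nabla \Gamma(\overline{\mathcal{R}}(x))|\leq C f(x)^{n}|\overline{\mathcal{R}}(x)|$ and, in view of $u\leq \Gamma$ together with the supergradient inequality $\Gamma(y)\leq \Gamma(x)+\langle y-x, \nabla \Gamma(x)\rangle$ for concave $\Gamma$, a lower measure bound of the form
$$\left|\left\{y\in \Theta_{r}(x)\setminus \Theta_{sr}(x) : u(y)\geq \Gamma(y) - Cf(x)\sum_{i}r^{2/(n+\sigma_{i})}\right\}\right|\geq (1-\varepsilon_{0})|\Theta_{r}\setminus \Theta_{sr}|.$$
Since $\sum_{i}r^{2/(n+\sigma_{i})}$ is comparable to the squared diameter $(\tilde d(x))^{2}$ of the companion box $\tilde{\mathcal{R}}(x) = \tilde R_{a,s^{k+1}}(x)$, and $\Theta_{r}(x)\setminus \Theta_{sr}(x) \subset C\tilde{\mathcal{R}}(x)$ for a dimensional constant $C$, these are precisely the pointwise versions of (5) and (6).

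Next I check that the family $\{\mathcal{R}(x)\}$ fits Lemma \ref{covering lemma}: the edge length of $\mathcal{R}(x)$ in the $i$-th coordinate direction is $2a^{1/(n+\sigma_{i})}\, t^{1/(n+\sigma_{\min})}$ with the single scalar parameter $t=t(x):=s^{k(x)+1}\in (0,1]$, and each such $h_{i}(t)$ is continuous, strictly increasing, and vanishes at $t=0$. Lemma \ref{covering lemma} therefore yields a countable subfamily $\{y_{\ell}\}$ of contact points with $\{u=\Gamma\}\cap \overline{B}_{1}\subset \bigcup_{\ell}\mathcal{R}(y_{\ell})$ and overlap at most $C(n)$. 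Compactness of the contact set reduces this to a finite subcollection.

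To pass from a bounded-overlap cover to a pairwise disjoint family I perform an anisotropic Vitali selection. After shrinking each rectangle by a fixed dimensional factor $C_{*}$, I order the finite collection by decreasing $t(y_{\ell})$, retain the first rectangle, discard every later one that meets it, and iterate on the survivors. Because the rectangles $R_{a,t}(y)$ have identical relative shape (only $t$ varies), each discarded shrunk rectangle is contained in the unshrunk version of the retained larger one that caused its removal. Dilating the retained rectangles back to their original size then produces a pairwise disjoint family $\{\mathcal{R}_{j}\}_{j=1}^{m}$ whose closures cover the contact set and each of whose closures contains the center $y_{j}$, giving (1)--(3). Property (4) is the uniform bound on $t$ coming from $r\leq \rho_{0}2^{-1/q_{\max}}$. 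Properties (5) and (6) transfer from the pointwise estimates at the centers $y_{j}$, using $f(y_{j})\leq \max_{\overline{\mathcal{R}}_{j}}f^{+}$, the comparability between $\sum_{i}r^{2/(n+\sigma_{i})}$ and $(\tilde d_{j})^{2}$, and the inclusion $\Theta_{r}(y_{j})\setminus \Theta_{sr}(y_{j})\subset C\tilde{\mathcal{R}}_{j}$; the dilation factor $C_{*}$ is absorbed into the universal constants.

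The hard part is the third step: Lemma \ref{covering lemma} delivers finite overlap but not disjointness, and the Vitali selection must respect the anisotropy without inflating the constants. Two facts make the argument universal. First, the overlap bound $C(n)$ in Lemma \ref{covering lemma} is independent of the homogeneities $\sigma_{i}$, so the dimensional dilation $C_{*}$ needed in the selection does not depend on $\sigma_{i}$ either. Second, the two anisotropic scales $R_{a,t}$ and $\tilde R_{a,t}$ must be tracked separately throughout: the gradient estimate lives naturally on the smaller $\overline{\mathcal{R}}_{j}$, whereas the quadratic-detachment measure bound lives on a fixed dilate of the larger $\tilde{\mathcal{R}}_{j}$, which is why property (6) is stated on $C\tilde{\mathcal{R}}_{j}$ rather than on $\overline{\mathcal{R}}_{j}$.
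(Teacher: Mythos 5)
Your reduction of properties (5) and (6) to the pointwise statements of Corollary~\ref{ABP cor 1} (via concavity of $\Gamma$, the comparability of $\sum_i r^{2/(n+\sigma_i)}$ with $(\tilde d_j)^2$, and the inclusion $\Theta_r(x)\setminus\Theta_{sr}(x)\subset C\tilde{\mathcal R}(x)$) is sound and is essentially the same computation the paper performs. The gap is in your third step. A Vitali-type selection cannot deliver properties (1) and (2) simultaneously: if you shrink the rectangles by $C_*$, order by decreasing $t$, and greedily retain/discard, then the \emph{retained shrunk} rectangles are pairwise disjoint but their closures do not cover the contact set (only their $C_*$-dilates do), whereas the \emph{dilated-back} retained rectangles cover but are no longer pairwise disjoint --- two disjoint shrunk rectangles with comparable $t$ can easily have overlapping dilates about their respective centers. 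Your sentence ``Dilating the retained rectangles back to their original size then produces a pairwise disjoint family whose closures cover the contact set'' asserts both conclusions for the same family, which is false in general. Disjointness plus covering-by-closures is a tiling property, not something a greedy selection from point-centered rectangles can produce.

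The paper avoids this entirely by working with a dyadic tiling from the start: it tiles $B_1$ by rectangles with edge lengths $(\rho_0 2^{-1/q_{\max}})^{1/(n+\sigma_i)}/2^{-\mathfrak C}$ (so the anisotropy is built into the fixed edge ratios), discards tiles missing $\{u=\Gamma\}$, and runs a stopping-time argument: any tile failing (5) or (6) is subdivided dyadically. Disjointness and covering are then automatic at every stage. The only thing requiring proof is termination, which is obtained by contradiction: an infinite chain of subdivided tiles would shrink to a contact point $x_0$, and Corollary~\ref{ABP cor 1} applied at $x_0$ shows that the largest tile of the chain contained in $R_{a,s^{k_0+1}}(x_0)$ already satisfies (5) and (6), so it would not have been split. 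If you want to keep your point-centered construction, you would need to replace the Vitali step by exactly such a dyadic stopping-time (or otherwise partition the union of your selected rectangles), at which point the covering lemma and compactness become unnecessary for this theorem --- they are needed later, in Lemma~\ref{Point Estimates 1}, to pass from the disjoint $\mathcal R_j$ to the overlapping dilates $C\tilde{\mathcal R}_j$.
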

\begin{proof}
We cover the ball $B_{1}$ with a tiling of rectangles of edges 
$$\dfrac{\left( \rho_{0} 2^{-\frac{1}{q_{\max}}}\right)^{\frac{1}{n+\sigma_{i}}}}{2^{-\mathfrak{C}}}.$$
We discard all those that do not intersect $\left\lbrace  u = \Gamma \right\rbrace$. Whenever a rectangle does not satisfy (5) and (6), we split its edges by $2^{n\mathfrak{C}}$ and discard those whose closure does not intersect $\left\lbrace  u = \Gamma \right\rbrace$. Now we prove that all remaining rectangles satisfy (5) and (6) and that this process stops after a finite number of steps.

As in \cite{CS} we will argue by contradiction. Suppose the process is infinite. Thus, there is a sequence of nested rectangles $\mathcal{R}_{j}$ such that the intersection of their closures will be a point $x_{0}$. Moreover, since 
$$
\left\lbrace  u = \Gamma \right\rbrace \cap \overline{\mathcal{R}}_{j} \neq \emptyset 
$$ 
and $\left\lbrace  u = \Gamma \right\rbrace$ is closed, we have $x_{0} \in \left\lbrace  u = \Gamma \right\rbrace$. Let $0 < \varepsilon_{1} < \varepsilon_{0}$, where $\varepsilon_{0}$ is as in Lemma \ref{ABP cor 1}. Then, there exist 
$$r \in \left( 0, \rho_{0} 2^{-\frac{1}{q_{\max}} } \right)$$ 
and $k_{0}= k_{0}\left( x_{0} \right)$ such that 
$$\left | \Theta_{r} \setminus \Theta_{s r} \cap \left\lbrace y : u\left( x + y\right) < u\left( x \right) + \langle y, \nabla \Gamma \left( x \right)\rangle - C f\left( x \right)\sum \limits_{i=1}^{n} r^{\frac{2}{n+\sigma_{i}}} \right\rbrace \right | 
$$
\begin{equation}\label{meas est diam}
\leq \varepsilon_{1}  \left | \Theta_{r} \setminus \Theta_{s r} \right |
\end{equation}
and 
\begin{eqnarray}
\label{meas est diam 2} 
\left | \nabla \Gamma \left( R^{}_{a,s^{k_{0} + 1}}\left( x_{0} \right) \right) \right | \leq C f\left( x_{0} \right)^{n} \left |  R^{}_{a,s^{k_{0} + 1}}\left( x_{0} \right)\right |,
\end{eqnarray}
where 
$$r = \rho_{0} 2^{-\frac{1}{q_{\max}}} 2^{-\mathfrak{C}\left( n + \sigma_{\min}\right) k_{0}}.$$
Let $\mathcal{R}_{j}$ be the largest rectangle in the family containing $x_{0}$ and contained in $R^{}_{a,s^{k_{0} + 1}}\left( x_{0} \right)$. Then $x_{0} \in \mathcal{R}_{j}$ and $\mathcal{R}_{j}$ has edges $l_{i}$ satisfying 
$$
2^{-\mathfrak{C}\left( k_{0} + 2\right)} \left( \rho_{0} 2^{-\frac{1}{q_{\max}}} \right)^{\frac{1}{n+\sigma_{i}}} \leq l_{i} < 2^{-\mathfrak{C}\left( k_{0} + 1\right)} \left( \rho_{0} 2^{-\frac{1}{q_{\max}}} \right) ^{\frac{1}{n+\sigma_{i}}}. 
$$
Thus, we get
$$
\mathcal{R}_{j} \subset R^{}_{a,s^{k_{0} + 1}} \quad \text{and} \quad  \Theta_{r} \subset C\tilde{\mathcal{R}}_{j},
$$
for some $C=C\left( n \right) > 1$. Furthermore, since $\Gamma$ is concave in $B_{2}$, we find
$$
\Gamma \left( y \right) \leq u \left( x_{0} \right) + \langle y - x_{0}, \nabla \Gamma \left( x_{0} \right)\rangle 
$$
in $B_{2}$. Thus, denoting
$$
A_{j}:= \left\lbrace y \in C \tilde{\mathcal{R}}_{j} : u\left( y \right) \geq \Gamma \left( y \right) - C\left( \max_{\overline{\mathcal{R}}_{j}} f \right) \left( \tilde{d}_{j}\right)^{2}  \right\rbrace ,
$$
using \eqref{meas est diam}, \eqref{meas est diam 2} and that $l_{i}$ and $s^{-k_{0}} \left( \rho_{0} 2^{-\frac{1}{q_{\max}}} \right) ^{\frac{1}{n+\sigma_{i}}}$ are comparable, we obtain
\begin{eqnarray*}
\left | A_{j} \right | & \geq & \left | \left\lbrace y  \in C \tilde{\mathcal{R}}_{j} : u\left( y\right) \geq u\left( x_{0} \right) + \langle y - x_{0}, \nabla \Gamma \left( x_{0} \right)\rangle \right. \right. \\
& & \left. \left. - C f\left( x_{0} \right)\sum \limits_{i=1}^{n} r^{\frac{2}{n+\sigma_{i}}} \right\rbrace \right | \\ 
& \geq & \left( 1 - \varepsilon_{1} \right) \left | \Theta_{r} \setminus \Theta_{s r} \right | \\ 
& \geq & \varsigma \left |\tilde{\mathcal{R}}_{j}\right |
\end{eqnarray*}
and
\begin{eqnarray*}
\left | \nabla \Gamma \left( \mathcal{R}_{j} \right) \right | & \leq & \left | \nabla \Gamma \left( R^{}_{a,s^{k_{0} + 1}}\left( x_{0} \right) \right) \right | \\  
& \leq & C f\left( x_{0} \right)^{n} \left |   R^{}_{a,s^{k_{0} + 1}}\left( x_{0} \right) \right | \\ 
& = & C_{1} f\left( x_{0} \right)^{n} \left | \mathcal{R}_{j} \right |.
\end{eqnarray*} 
Then $\mathcal{R}_{j}$ would not be split and the process must stop, which is a contradiction.
\end{proof}

\section{A barrier function}\label{Barrier function section}

With the aim of localising the contact set of a solution $u$ of the maximal equation, as in Lemma \ref{cov. 1 lemma}, we build a barrier function which is a supersolution of the minimal equation outside a small ellipse and is positive outside a large ellipse. 

\begin{lemma}
\label{Barr function 1}
Given $R > 1, $ there exist $p >0$ and $\sigma_{0} \in \left( 0, 2\right)$ such that the function
$$f\left( x \right) = \min \left( 2^{p}, \  | x |^{-p}\right) $$
satisfies
$$M^{-}f\left( x \right) \geq 0, $$
for $\sigma_{0} < \sigma_{\min}$ and $1 \leq | x | \leq R $, where $p = p\left( n, \lambda, \Lambda, R \right)$, $\sigma_{0} = \sigma_{0}\left( n, \lambda, \Lambda, R \right)$.
\end{lemma}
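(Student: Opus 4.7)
The plan is to verify the estimate by a direct computation of $M^{-}f(x)$, exploiting that $f$ is smooth on the annulus in question and that the Hessian of $|x|^{-p}$ has one large positive radial eigenvalue of order $p^{2}$ together with $n-1$ moderate negative tangential eigenvalues of order $p$. For $1 \leq |x| \leq R$, $f$ coincides with $|x|^{-p}$ in a neighbourhood of $x$, so Lemma \ref{I is C^{1,1}} allows one to compute $M^{-}f(x)$ in the classical sense as
$$M^{-}f(x) \;=\; c_{\sigma}\int_{\mathbb{R}^{n}} \frac{\lambda\,\delta^{+}(f,x,y) - \Lambda\,\delta^{-}(f,x,y)}{\sum_{i=1}^{n}|y_{i}|^{n+\sigma_{i}}}\,dy,$$
and a direct computation gives $D^{2}f(x) = p(p+2)|x|^{-p-4}\,x\otimes x - p|x|^{-p-2}I$, with radial eigenvalue $p(p+1)|x|^{-p-2}$ and $(n-1)$ tangential eigenvalues equal to $-p|x|^{-p-2}$.

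Next, I would fix a cutoff $\eta \in (0, 1/2)$ and split the integral into a near-field $|y| < \eta$ and a far-field $|y| \geq \eta$. On the near-field, $|x \pm y| \geq 1/2$, so a symmetric Taylor expansion yields
$$\delta(f,x,y) \;=\; y^{T}D^{2}f(x)\,y + \mathcal{E}(y), \qquad |\mathcal{E}(y)| \leq C\,p^{4}|y|^{4},$$
the odd terms cancelling by symmetry. Writing $\lambda \delta^{+} - \Lambda \delta^{-} = \lambda \delta - (\Lambda - \lambda)\delta^{-}$, integrating the quadratic form against the symmetric kernel, and controlling $[y^{T}D^{2}f(x)\,y]^{-}$ by its tangential part, one obtains a lower bound of the form
$$I_{\mathrm{near}}(x) \;\geq\; \beta_{\sigma}(\eta)\,p\,|x|^{-p-2}\bigl[\lambda(p+1) - \Lambda(n-1)\bigr] \;-\; C(n,\Lambda)\,p^{4}\,\gamma_{\sigma}(\eta),$$
where $\beta_{\sigma}(\eta)$ and $\gamma_{\sigma}(\eta)$ are the positive anisotropic weights obtained by integrating $c_{\sigma}|y|^{2}/\sum_{i}|y_{i}|^{n+\sigma_{i}}$, respectively $c_{\sigma}|y|^{4}/\sum_{i}|y_{i}|^{n+\sigma_{i}}$, over $\{|y|<\eta\}$. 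For the far-field, I would use $f \geq 0$ to get $\delta^{-}(f,x,y) \leq 2f(x) \leq 2$, together with the kernel tail bound $\int_{|y|\geq \eta} c_{\sigma}/\sum_{i}|y_{i}|^{n+\sigma_{i}}\,dy \leq C\eta^{-\sigma_{\min}}$, producing a uniform estimate $|I_{\mathrm{far}}(x)| \leq C\,c_{\sigma}\,\eta^{-\sigma_{\min}}$.

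To close, one first fixes $p = p(n,\lambda,\Lambda)$ large enough that $\lambda(p+1) - \Lambda(n-1) \geq \lambda p/2$, which makes the leading part of $I_{\mathrm{near}}$ positive and of order $\beta_{\sigma}(\eta)\,\lambda p^{2}\,|x|^{-p-2}$; then picks $\eta = \eta(p,R)>0$ small enough for the quartic remainder to be absorbed by half of this term; and finally demands $\sigma_{\min}>\sigma_{0}$ with $\sigma_{0}\in(0,2)$ chosen so that the positive leading term dominates the far-field remainder uniformly for $1 \leq |x| \leq R$. The main obstacle is precisely this last balance: the positive inner contribution carries the factor $|x|^{-p-2}\geq R^{-p-2}$, which is fixed once $p$ is, whereas the far-field is only $O(c_{\sigma}\eta^{-\sigma_{\min}})$; the balance closes by exploiting that as $\sigma_{\min}\uparrow 2$ the measure $c_{\sigma}\,dy/\sum_{i}|y_{i}|^{n+\sigma_{i}}$ concentrates near the origin, so $\beta_{\sigma}(\eta)$ grows and beats the tail. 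Morally, in this limit the operator formally degenerates to a uniformly elliptic Pucci operator applied to $D^{2}f$, for which the analogous positivity is the classical estimate $\lambda p(p+1) - \Lambda(n-1)p > 0$.
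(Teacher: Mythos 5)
Your proposal follows essentially the same route as the paper's proof: a second-order expansion of $|x|^{-p}$ exhibiting the large positive radial eigenvalue $p(p+1)$ against the $(n-1)$ tangential eigenvalues $-p$, a near/far splitting of the integral, a quartic remainder, and a final choice of $\sigma_{0}$ close to $2$ to absorb the error terms. Two points in your sketch need repair, both tied to the anisotropy. First, writing the near-field lower bound with a single weight $\beta_{\sigma}(\eta)$ multiplying $\lambda(p+1)-\Lambda(n-1)$ tacitly assumes that the second moment of the kernel in the radial direction $x/|x|$ is at least a fixed fraction of its moment in each tangential direction; for a kernel comparable to $c_{\sigma}/\sum_{i}|y_{i}|^{n+\sigma_{i}}$ this is not automatic and is precisely where the paper works: it bounds the radial moment from below via the isotropic kernel $|y|^{-n-\sigma_{\min}}$, whose normalized moment is $\asymp c_{\sigma}/(2-\sigma_{\min})\gtrsim 1$, and bounds the full second moment from above by a universal constant via the dyadic decomposition over $\Theta_{r_{k}}\setminus\Theta_{r_{k+1}}$ (the constant $C_{5}$ in the paper). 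With those two universal constants the condition on $p$ becomes $c\,\lambda(p+1)>C\,\Lambda(n-1)$, still met by taking $p$ large. Second, your closing mechanism is misstated: $\beta_{\sigma}(\eta)$ does \emph{not} grow as $\sigma_{\min}\uparrow 2$ --- since $c_{\sigma}\asymp 2-\sigma_{\min}$, it stays pinned between $c\,\eta^{2}$ and $C$ --- rather, it is the far-field term that carries a bare factor of $c_{\sigma}$ (the paper's $I_{4}\geq -C_{8}(1/4)^{-\sigma_{\max}}c_{\sigma}/\sigma_{\min}$) and hence tends to zero, and likewise the quartic term if one keeps $\eta$ fixed as the paper does with $\eta=1/4$. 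A minor slip in the same spirit: on $\eta<|y|<1$ the kernel is only controlled by $|y|^{-n-\sigma_{\max}}$, so your tail bound should read $\eta^{-\sigma_{\max}}$ rather than $\eta^{-\sigma_{\min}}$; this is immaterial since $\sigma_{\max}<2$. With these corrections your argument closes exactly as the paper's does.
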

    
\begin{proof}
In the sequel we will use the following elementary inequalities:
\begin{equation}
\label{bar func: elem ineq 1}
\left( a_{2} + a_{1}\right)^{-s} + \left( a_{2} - a_{1}\right)^{-s} \geq  2a^{-s}_{2} + s\left( s + 1\right)a^{2}_{1}a_{2}^{-s-2} 
\end{equation}
and
\begin{equation}
\label{bar func: elem ineq 2}
\left( a_{2} + a_{1}\right)^{-s} \geq a^{-s}_{2}\left( 1 - s\frac{a_{1}}{ a_{2}}\right). 
\end{equation}
where $0 < a_{1} < a_{2}$ and $s > 0$.
Taking into account the inequalities \eqref{bar func: elem ineq 1} and \eqref{bar func: elem ineq 2}, we estimate, for $| y | < \frac{1}{2}$, 
\begin{eqnarray*}
\delta (f,e_1,y)& := & \nonumber | e_{1} + y|^{-p} + | e_{1} - y|^{-p} - 2 \\ 
& = & \left(  1 + |y|^{2} + 2y_{1} \right)^{-\frac{p}{2}} +  \left(  1 + |y|^{2} - 2y_{1} \right)^{-\frac{p}{2}} - 2 \\ 
& \geq & \left( 1 + |y|^{2} \right)^{-\frac{p}{2}} + p\left( p + 2 \right)y_{1}^{2} \left( 1 + |y|^{2} \right)^{-\frac{p+4}{2}} - 2 \\ 
& \geq &  2\left( 1 - \frac{p}{2}|y|^{2}\right)  +  p\left( p + 2 \right)^{2}y_{1}^{2} - p\left( p + 4\right) \frac{\left( p + 2 \right)}{2}y^{2}_{1}|y|^{2}  - 2\\
& = & p \left[  - |y|^{2} + \left( p + 2 \right)y_{1}^{2} - \left( p + 4\right) \frac{\left( p + 2 \right)}{2}y^{2}_{1}|y|^{2}\right].
\end{eqnarray*}
Given $1 \leq |x| \leq R $, there is a rotation $T_{x}:\mathbb{R}^{n} \rightarrow \mathbb{R}^{n}$ such that $x = |x| Te_{1}$. Thus, changing variables, we get
$$
M^{-}f\left( x \right) = c_{\sigma} |x|^{n-p} \left | \det T_{x} \right | \left[ \int_{\mathbb{R}^{n}} \dfrac{\lambda \delta^{+}\left(f, e_{1}, y \right) - \Lambda\delta^{-}\left(f, e_{1}, y \right)}{\sum_{i=1}^{n}|\left( |x|T_{x}y\right) _{i}|^{n+\sigma_{i}}} dy \right]. 
$$
Then, we estimate
\begin{eqnarray}\label{below estimate for min sol}
|x|^{p-n} M^{-}f\left( x \right) & = & \nonumber c_{\sigma} \int_{B_{1/4}\left( 0 \right)}  \dfrac{\Lambda \delta^{+}\left(f, e_{1}, y \right) - \lambda \delta^{-}\left(f, e_{1}, y \right) }{\sum_{i=1}^{n}||x|\left( T_{x}y\right) _{i}|^{n+\sigma_{i}}} dy \nonumber \\ 
& &+  c_{\sigma} \int_{\mathbb{R}^{n} \setminus B_{1/4}\left( 0 \right)} \dfrac{\Lambda \delta^{+}\left(f, e_{1}, y \right) - \lambda \delta^{-}\left(f, e_{1}, y \right) }{\sum_{i=1}^{n}||x|\left( T_{x}y\right) _{i}|^{n+\sigma_{i}}} dy \nonumber \\ 
& \geq &  c_{\sigma} \int_{B_{1/4}\left( 0 \right)}\dfrac{2p \lambda\left( p + 2 \right)y^{2}_{1}}{\sum_{i=1}^{n}||x|\left( T_{x}y\right) _{i}|^{n+\sigma_{i}}} dy  \nonumber\\
& & -c_{\sigma} \int_{B_{1/4}\left( 0 \right)}\dfrac{2p \Lambda |y|^{2}}{\sum_{i=1}^{n}||x|\left( Ty\right) _{i}|^{n+\sigma_{i}}} dy \nonumber\\ 
&  & - c_{\sigma}\int_{B_{1/4}\left( 0 \right)}\dfrac{\frac{1}{2}p\left( p + 4\right) \left( p + 2 \right)|y|^{4}}{\sum_{i=1}^{n}||x|\left( T_{x}y\right) _{i}|^{n+\sigma_{i}}} dy \nonumber\\ 
&  &+ c_{\sigma} \int_{\mathbb{R}^{n} \setminus B_{1/4}\left( 0 \right)} \dfrac{- \lambda 2^{p+1} }{\sum_{i=1}^{n}||x|\left( T_{x}y\right) _{i}|^{n+\sigma_{i}}|} dy \nonumber\\ 
& := & I_{1} + I_{2} + I_{3} + I_{4},
\end{eqnarray}
where $I_{1}$, $I_{2}$, $I_{3}$ and $I_{4}$ represent the three terms on the right-hand side of the above inequality. 

We estimate
\begin{eqnarray*}
\label{I_{1} estimate}
p^{-1} I_{1} & \geq & n^{-1}c_{\sigma}\lambda \left( p+2\right) |x|^{- \left( n+2\right)} \int_{B_{1/4}\left( 0 \right)}\dfrac{y_{1}^{2}}{|y|^{n+\sigma_{\min}}} dy  \\ 
& \geq & R^{- \left( n+2\right)}n^{-1} \left[ c_{\sigma}\lambda \left( p+2\right)\int_{\partial B_{1}} y^{2}_{1} d\nu \left( y \right)\right] \int_{0}^{\delta/4}t^{1-\sigma_{\min}} dt \\ 
& \geq & C_{3}\dfrac{c_{\sigma}}{2-\sigma_{\min}}\left[\left( p+2\right)\int_{\partial B_{1}} y^{2}_{1} d\nu \left( y \right)\right]\left(\dfrac{1}{4} \right)^{2-\sigma_{\min}} \\ 
& \geq & C_{3}c\left( n \right) \left[\left( p+2\right)\int_{\partial B_{1}} y^{2}_{1} d\nu \left( y \right)\right] ,
\end{eqnarray*}
where $C_{3}=C_{3}\left( n, \lambda, \Lambda, R \right)>0$. Moreover, if $C=C\left( n\right) >0$ is a positive constant such that $B_{1/4}\left( 0 \right)  \subset \Theta_{C}$, we have, for $|x| \geq 1$,
\begin{eqnarray*}
\label{I_{2} estimate}
p^{-1}I_{2} & \geq & - C_{4} c_{\sigma} \int_{B_{1/4}\left( 0 \right)}\dfrac{|y|^{2}}{\sum_{i=1}^{n}|\left( T_{x}y\right) _{i}|^{n+\sigma_{i}}} dy \\ 
& = & - C_{4} c_{\sigma}\left | \det T^{-1}_{x} \right |\int_{B_{1/4}\left( 0 \right)}\dfrac{|T_{x}^{-1}y|^{2}}{\sum_{i=1}^{n}|y_{i}|^{n+\sigma_{i}}} dy \\ 
& = & - C_{4} c_{\sigma}\int_{B_{1/4}\left( 0 \right)}\dfrac{|y|^{2}}{\sum_{i=1}^{n}|y_{i}|^{n+\sigma_{i}}} dy \\ 
& \geq & - C_{4} c_{\sigma}\int_{\Theta_{C}}\dfrac{|y|^{2}}{\sum_{i=1}^{n}|y_{i}|^{n+\sigma_{i}}} dy, 
\end{eqnarray*}
where $C_{4}=C_{4}\left( n, \lambda, \Lambda \right)$.
We have also
$$
c_{\sigma}\int_{\Theta_{C}}\dfrac{|y|^{2}}{\sum_{i=1}^{n}| y_{i}|^{n+\sigma_{i}}} dy = c_{\sigma}\sum_{k=1}^{\infty} \int_{\Theta_{r_{k}} \setminus \Theta_{r_{k+1}}}\dfrac{|y|^{2}}{\sum_{i=1}^{n}|y_{i}|^{n+\sigma_{i}}} dy \leq C_{5},
$$
where $r_{k}:= C 2^{-k}$ and $C_{5}=C_{5}\left( n, \lambda, \Lambda \right)$.
Moreover, using the elementary inequality
$$
\left( a + b \right)^{m}  \leq 2^{m}\left( a^{m} + b^{m}\right), \quad \ \text{for all}, \ a,b,m \in \left( 0, \infty \right) ,
$$
we get
\begin{eqnarray}
\label{I_{3} estimate}
I_{3} & \geq & - C_{6}c_{\sigma} 2^{\frac{n+\sigma_{\max}}{2}}\int_{B_{\delta/4}\left( 0 \right)}\dfrac{|y|^{4}}{|y|^{n+\sigma_{\max}}} dy \nonumber \\ 
& \geq & - C_{7}\dfrac{c_{\sigma}}{\left( 4 - \sigma_{\max}\right)}\left( \dfrac{ 1}{4}\right) ^{4-\sigma_{\max}}
\end{eqnarray}
and 
\begin{eqnarray}
\label{I_{4} estimate}
I_{4} & \geq & -  c_{\sigma}\left( \dfrac{1}{4}\right)^{-\sigma_{\max}+\sigma_{\min}} 2^{\frac{n+\sigma_{\min}}{2}}\int_{\mathbb{R}^{n} \setminus B_{1/4}\left( 0 \right)} \dfrac{ \Lambda 2^{p+2} }{|y|^{n+\sigma_{\min}}} dy \nonumber \\ 
& = & -  C_{8}\left( \dfrac{1}{4}\right)^{-\sigma_{\max}} 2^{\frac{n+\sigma_{\min}}{2}} \dfrac{c_{\sigma}}{\sigma_{\min}} \nonumber \\ 
& \geq & - C_{8} \left( \dfrac{1}{4}\right)^{-\sigma_{\max}} \dfrac{c_{\sigma}}{\sigma_{\min}}, 
\end{eqnarray}
for positive constants $C_{7}=C_{7}\left( n, \lambda, \Lambda,p \right)$ and $C_{8}=C_{8}\left( n, \lambda, \Lambda,p \right)$. Choosing $p=p\left( n, \lambda, \Lambda, R \right) > 0$ such that
$$
C_{3} \left( p+2\right) \int_{\partial B_{1}} y^{2}_{1} d\nu \left( y \right) - C_{4}C_{5}  > 0 
$$
and combining \eqref{below estimate for min sol}, \eqref{I_{3} estimate} and \eqref{I_{4} estimate}, there is a positive constant $\sigma_{0}=\sigma_{0}\left( n, \lambda, \Lambda, R \right) < 2 $ such that
$$
|x|^{p-n} M^{-}f\left( x \right)  \geq   C_{9} >0,
$$
for a positive constant $C_{9}=C_{9}\left( n, \lambda, \Lambda, R \right)$ and $\sigma_{0} < \sigma_{\min} < 2$.
\end{proof}
     
\begin{corollary}
\label{Coroll Barr function 1}
Given $r > 0 $, $\sigma_{0} \in \left( 0,  2\right)$, $\sigma_{0} < \sigma_{\min}$, and $R > 1$, there exist $s>0$ and $p >0$ such that the function
$$
f\left( x \right) = \min \left( s^{-p}, \  | x |^{-p}\right)  
$$
satisfies
$$
M^{-}f\left( x \right) \geq 0,
$$
for $1 \leq | x | \leq R $, where $p = p\left( n, \lambda, \Lambda,  R \right)$ and $s = s\left( n, \lambda, \Lambda,\sigma_{0}, R \right)$.
\end{corollary}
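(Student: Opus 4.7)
My plan is to retrace the proof of Lemma~\ref{Barr function 1}, observing that the specific constant $2^{p}$ enters its argument only through the global bound $\max f \leq 2^{p}$ used in the tail estimate~\eqref{I_{4} estimate}. For $f(x) = \min(s^{-p}, |x|^{-p})$ one has instead $\max f = s^{-p}$, and repeating that tail computation verbatim yields
$$
I_4 \;\geq\; -C(n,\lambda,\Lambda)\,s^{-p}\,\frac{c_\sigma}{\sigma_{\min}} \;\geq\; -\frac{C\,s^{-p}\,c_\sigma}{\sigma_0},
$$
where the hypothesis $\sigma_{\min} > \sigma_0 > 0$ is used in the last step to keep the constant finite without requiring $\sigma_0$ close to $2$.

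For the Taylor-region contributions $I_1, I_2, I_3$, the estimates from the proof of Lemma~\ref{Barr function 1} carry over unchanged as long as $f(x \pm y) = |x \pm y|^{-p}$ on the Taylor ball $|y| < 1/4$, which is guaranteed for $|x| \geq 1$ provided $s \leq 3/4$. Taking $p = p(n,\lambda,\Lambda,R)$ exactly as in Lemma~\ref{Barr function 1} and using $\sigma_{\min} > \sigma_0$ to bound uniformly (away from $0$ and $\infty$) the $\sigma$-dependent factors $c_\sigma/(2-\sigma_{\min})$, $(1/4)^{2-\sigma_{\min}}$, and $(1/4)^{4-\sigma_{\max}}/(4-\sigma_{\max})$, we obtain
$$I_1 + I_2 + I_3 \;\geq\; C^{*}(n,\lambda,\Lambda,R,\sigma_0) > 0.$$
It then suffices to choose $s = s(n,\lambda,\Lambda,\sigma_0,R) \in (0,\, 3/4]$ with $s^{-p} \leq \sigma_0 C^{*}/C$ (equivalently, $s \geq (C/(\sigma_0 C^{*}))^{1/p}$) so that $-I_4 \leq C^{*}$; then $|x|^{p-n} M^{-} f(x) \geq I_1 + I_2 + I_3 + I_4 \geq 0$ on the annulus $\{1 \leq |x| \leq R\}$, which is the claimed bound.

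I expect the main obstacle to be the simultaneous control of the enlarged tail $|I_4| \sim s^{-p}$ and the fourth-order Taylor correction $|I_3| \sim p^{3} c_\sigma$ by the leading positive contribution $I_1 \sim p^{2} c_\sigma/(2-\sigma_{\min})$. In Lemma~\ref{Barr function 1} both $|I_3|$ and $|I_4|$ are made small at once by letting $\sigma_0 \to 2^{-}$, which forces $c_\sigma \to 0$ (in the isotropic case) and absorbs everything. Here that mechanism is absent since $\sigma_0 \in (0,2)$ is arbitrary; it is replaced by the freedom to push $s \to 1$, which shrinks $|I_4|$, while $|I_3|$ must still be held in check through the lemma's specific (and $\sigma_0$-independent) choice of $p$. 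The nontrivial part of the proof is therefore the careful bookkeeping of $\sigma_0$-dependent constants to verify that these two freedoms (in $p$ and in $s$) can be exercised consistently.
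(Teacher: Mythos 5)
Your reduction to the four terms $I_{1},\dots,I_{4}$ of Lemma \ref{Barr function 1} is the natural starting point (the paper itself only records the two uniform facts $c_{\sigma}\geq c(n)(2-\sigma_{\min})$ and $c_{\sigma}\int_{\Theta_{C}}|y|^{2}\bigl(\sum_{i}|y_{i}|^{n+\sigma_{i}}\bigr)^{-1}dy\leq C_{1}(n)$ and then defers to Corollary 9.2 of \cite{CS}), but the two points you defer to ``bookkeeping'' are exactly where the argument breaks. First, the claim $I_{1}+I_{2}+I_{3}\geq C^{*}>0$ with $p$ ``exactly as in Lemma \ref{Barr function 1}'' is unjustified. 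In that lemma the choice of $p$ only guarantees $C_{3}(p+2)\int_{\partial B_{1}}y_{1}^{2}\,d\nu-C_{4}C_{5}>0$, i.e.\ $I_{1}+I_{2}>0$; the term $I_{3}$, which is of size $p^{3}c_{\sigma}$ against $I_{1}\sim p^{2}$, is absorbed there only by sending $\sigma_{\min}\to 2$, which makes $c_{\sigma}\leq(2-\sigma_{\min})/n$ small. Once $\sigma_{0}\in(0,2)$ is arbitrary, $c_{\sigma}$ is merely bounded, and since the lemma's $p$ is large (of order $(\Lambda/\lambda)R^{n+2}$) there is nothing preventing $I_{1}+I_{3}<0$. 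Some new device is required here --- e.g.\ shrinking the radius of the Taylor region as a function of $p$, or exploiting the convexity of $t\mapsto t^{-p}$ to see that the uncapped profile satisfies $\delta\geq 0$ everywhere so that only the capped region contributes negatively --- and your proposal contains neither; your closing paragraph explicitly flags this as ``the nontrivial part'' and leaves it unproved.

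Second, your choice of $s$ is internally inconsistent. You require $s\leq 3/4$ (so that $f=|\cdot|^{-p}$ on the Taylor ball) and simultaneously $s^{-p}\leq\sigma_{0}C^{*}/C$; but $s\leq 3/4$ forces $s^{-p}\geq(4/3)^{p}>1$, so the second condition demands $\sigma_{0}\geq (C/C^{*})(4/3)^{p}$ with $p$, $C$, $C^{*}$ essentially independent of $\sigma_{0}$ --- which fails for every small $\sigma_{0}$, i.e.\ in precisely the regime the corollary is meant to cover. More structurally, ``pushing $s\to 1$'' cannot reduce the tail below its $s$-independent floor: since $f\geq 0$ and $f(x)=|x|^{-p}\leq 1$ on the annulus, one always has $\delta^{-}\leq 2f(x)\leq 2$, so the tail term is genuinely of size $c_{\sigma}\sigma_{\min}^{-1}4^{\sigma_{\max}}\lesssim 1/\sigma_{0}$ whatever $s$ is, and this fixed quantity must be beaten by the positive margin. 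So the proposal correctly names the obstacles but does not overcome either of them.
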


\begin{proof}
Since $c_{\sigma} \geq c\left( n \right)  \left( 2 - \sigma_{\min} \right)$ and 
$$
c_{\sigma}\int_{\Theta_{C}}\dfrac{|y|^{2}}{\sum_{i=1}^{n}| y_{i}|^{n+\sigma_{i}}} dy = c_{\sigma}\sum_{k=1}^{\infty} \int_{\Theta_{r_{k}} \setminus \Theta_{r_{k+1}}}\dfrac{|y|^{2}}{\sum_{i=1}^{n}|y_{i}|^{n+\sigma_{i}}} dy \leq C_{1}\left(n \right) ,
$$
if $C=C\left( n\right)>0$ and $r_{k}:= C 2^{-k}$, we can argue as in Corollary 9.2 in \cite{CS}.  
\end{proof}
    
\begin{corollary}
\label{Coroll Barr function 2}
Given $r > 0 $, $R > 1$ and $\sigma_{0} \in \left( 0,  2\right)$, there exist $s>0$ and $p >0$ such that the function
$$
g\left( x \right) = \min \left( s^{-p}, \  | T_{r}^{-1} |^{-p}\right)  
$$
satisfies
$$
M^{-}g\left( x \right) \geq 0 
$$
for $\sigma_{0} < \sigma_{\min}$ and $ x \in E_{r,R}\setminus E_{r, 1} $, where $p = p\left( n, \lambda, \Lambda, R\right)$ and $s = s\left( n, \lambda, \Lambda, \sigma_{0}, R \right)$.
\end{corollary}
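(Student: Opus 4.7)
The natural strategy is to reduce Corollary \ref{Coroll Barr function 2} to Corollary \ref{Coroll Barr function 1} by the anisotropic dilation $T_r:\mathbb{R}^n\to\mathbb{R}^n$ defined by $T_r e_i = r^{1/(n+\sigma_i)}\,e_i$. Since the very definition of $E_{r,s}$ gives $T_r^{-1}(E_{r,s}) = B_s$, the map $T_r$ identifies the Euclidean annulus $B_R\setminus B_1$ with the anisotropic annulus $E_{r,R}\setminus E_{r,1}$, and the pulled-back function
$$\tilde g(y) := g(T_r y) = \min\!\left(s^{-p},\, |y|^{-p}\right)$$
is precisely the barrier profile appearing in Corollary \ref{Coroll Barr function 1}.

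First, I would perform the change of variables $z = T_r w$ in the integral defining each $L_{\alpha\beta}g$. For $x = T_r y_0$ one has $\delta(g, T_r y_0, T_r w) = \delta(\tilde g, y_0, w)$, so
$$L_{\alpha\beta}g(T_r y_0) = \int_{\mathbb{R}^n}\delta(\tilde g, y_0, w)\, \tilde K_{\alpha\beta}(w)\,dw, \qquad \tilde K_{\alpha\beta}(w) := K_{\alpha\beta}(T_r w)\,|\det T_r|.$$
The key observation is that $T_r$ is tuned exactly so as to preserve the anisotropic level-set structure: using the identity
$$\sum_{i=1}^{n} \left|(T_r w)_i\right|^{n+\sigma_i} = r\sum_{i=1}^{n} |w_i|^{n+\sigma_i},$$
the bound \eqref{Kernel cond 2} for $K_{\alpha\beta}$ transfers to
$$\frac{\lambda' c_\sigma}{\sum_{i=1}^{n} |w_i|^{n+\sigma_i}} \le \tilde K_{\alpha\beta}(w) \le \frac{\Lambda' c_\sigma}{\sum_{i=1}^{n} |w_i|^{n+\sigma_i}},$$
with $\lambda' = \lambda\,|\det T_r|/r$ and $\Lambda' = \Lambda\,|\det T_r|/r$. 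Hence $\{\tilde K_{\alpha\beta}\}$ lies in $\mathcal{L}_0$ up to the common positive factor $|\det T_r|/r$, and passing to the infimum yields
$$M^- g(T_r y_0) \;=\; \frac{|\det T_r|}{r}\, M^-\tilde g(y_0).$$

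Finally, Corollary \ref{Coroll Barr function 1}, applied to $\tilde g$ on the Euclidean annulus, furnishes exponents $p = p(n,\lambda,\Lambda,R)$ and $s = s(n,\lambda,\Lambda,\sigma_0,R)$ such that $M^- \tilde g(y_0)\geq 0$ whenever $1\leq |y_0|\leq R$, and the positivity is transported back through the scaling identity to give $M^- g(x)\geq 0$ on $E_{r,R}\setminus E_{r,1}$. The only non-bookkeeping point is the verification that the anisotropic exponents $\sigma_i$ are preserved by $T_r$—this is exactly why the formula $T_r e_i = r^{1/(n+\sigma_i)}\,e_i$ is the right choice—and once this is observed the argument reduces to the rescaling above and a direct invocation of the isotropic corollary.
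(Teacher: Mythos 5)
Your proposal is correct and follows essentially the same route as the paper: the paper's proof likewise consists of writing $g = f\circ T_r^{-1}$, changing variables, and using the scaling identity $M^{-}g(x) = r^{-1}\lvert\det T_r\rvert\, M^{-}f(T_r^{-1}x)$ together with Corollary \ref{Coroll Barr function 1}. Your version is in fact slightly more careful, since you record the key identity $\sum_i \lvert (T_r w)_i\rvert^{n+\sigma_i} = r\sum_i \lvert w_i\rvert^{n+\sigma_i}$ justifying that the rescaled kernels stay in the admissible class up to the common factor $\lvert\det T_r\rvert/r$, and you correctly identify $T_r^{-1}(E_{r,R}\setminus E_{r,1}) = B_R\setminus B_1$ (the paper's proof has a typo here, writing $B_R\setminus B_r$).
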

\begin{proof}
Considering the anisotropic scaling 
$$
g\left( x \right) = f\left( T_{r}^{-1}x\right), \quad x \in \mathbb{R}^{n}, 
$$
we have $T_{r}^{-1}\left( E_{r,R}\setminus E_{r, 1}\right) = B_{R}\setminus B_{r}$. Furthermore, changing variables, we estimate 
$$M^{-}g\left(  x \right)  =  r^{-1} |\det T_{r}| M^{-}f\left( T_{r}^{-1}x \right)  \geq  0, $$
for all $x \in E_{r,R}\setminus E_{r, 1}$.
\end{proof}
    
\begin{lemma}
\label{Exist barr func 5}
Given $\sigma_{0}\in \left(0, 2 \right)$, there is a function $\Psi:\mathbb{R}^{n} \rightarrow \mathbb{R}$ satisfying
\begin{enumerate}
\item $\Psi$ is continuous in $\mathbb{R}^{n}$;
\medskip
\item $\Psi = 0$ for $x \in \mathbb{R}^{n} \setminus E_{\frac{1}{4},3\sqrt{n}}$;
\medskip
\item $\Psi> 3$ for $x \in \mathcal{R}_{\frac{1}{4},3}$; 
\medskip
\item $M^{-}\Psi\left( x\right) > - \phi\left( x \right) $ for some positive function $\phi \in C_{0}\left( E_{\frac{1}{4},1} \right)$ for $\sigma_{0} < \sigma_{\min}$.
\end{enumerate}
\end{lemma}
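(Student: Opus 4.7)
My plan is to produce $\Psi$ as a truncation of the supersolution built in Corollary \ref{Coroll Barr function 2}. Applying that corollary with $r = 1/4$ and $R = 3\sqrt{n}$ yields constants $p = p(n,\lambda,\Lambda,R) > 0$ and $s = s(n,\lambda,\Lambda,\sigma_{0},R) > 0$ together with
\[
 g(x) := \min\bigl\{s^{-p},\ |T_{1/4}^{-1}x|^{-p}\bigr\},
\]
satisfying $M^{-} g(x) \geq 0$ for every $x \in E_{1/4,\,3\sqrt{n}} \setminus E_{1/4,\,1}$ (provided $\sigma_{0} < \sigma_{\min}$). Inspecting the proof of that corollary, $s$ may actually be chosen in the open interval $(\sqrt{n}\,3^{1/(n+\sigma_{\min})},\, 3\sqrt{n})$, which is non-empty since $3^{1/(n+\sigma_{\min})} < 3$. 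Such a choice guarantees simultaneously that $E_{1/4,\,1} \cup \mathcal{R}_{1/4,\,3} \subset E_{1/4,\,s}$ (so $g \equiv s^{-p}$ on these two sets) and that $s^{-p} > (3\sqrt{n})^{-p}$.

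Next I would define
\[
 \Psi(x) := C_{*}\bigl(g(x) - (3\sqrt{n})^{-p}\bigr)_{+},
\]
for a constant $C_{*} > 0$ to be fixed at the end. Since $g$ is continuous with $g \equiv (3\sqrt{n})^{-p}$ exactly on $\partial E_{1/4,\,3\sqrt{n}}$ and $g \leq (3\sqrt{n})^{-p}$ outside that set, $\Psi$ is continuous on $\mathbb{R}^{n}$ and vanishes on $\mathbb{R}^{n} \setminus E_{1/4,\,3\sqrt{n}}$, giving (1) and (2). Moreover $\Psi \equiv C_{*}(s^{-p} - (3\sqrt{n})^{-p})$ on $\mathcal{R}_{1/4,\,3}$, a strictly positive constant; taking $C_{*}$ large then yields (3).

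For (4), I would decompose $C_{*}^{-1}\Psi = h + h_{-}$ where $h(x) := g(x) - (3\sqrt{n})^{-p}$ and $h_{-} := \max\{-h,\,0\}$. By subadditivity of $M^{-}$,
\[
 M^{-}\Psi \;\geq\; C_{*}\bigl(M^{-}h + M^{-}h_{-}\bigr).
\]
Since additive constants cancel in the symmetric second difference $\delta(\cdot,x,y)$, we have $M^{-}h = M^{-}g \geq 0$ on $E_{1/4,\,3\sqrt{n}} \setminus E_{1/4,\,1}$. Furthermore $h_{-} \geq 0$ and $h_{-} \equiv 0$ on $E_{1/4,\,3\sqrt{n}}$, so for any $x$ in this set $\delta(h_{-},x,y) = h_{-}(x+y) + h_{-}(x-y) \geq 0$, whence $M^{-}h_{-}(x) \geq 0$. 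At points $x \notin E_{1/4,\,3\sqrt{n}}$ we have $\Psi(x)=0$ while $\Psi \geq 0$ globally, so the same sign argument applied directly to $\Psi$ gives $M^{-}\Psi(x) \geq 0$. Thus $M^{-}\Psi \geq 0$ throughout $\mathbb{R}^{n} \setminus E_{1/4,\,1}$. On the compact closure of $E_{1/4,\,1}$, $\Psi$ is constant in a neighbourhood and hence locally $C^{1,1}$, so by Lemma \ref{I is C^{1,1}} $M^{-}\Psi$ is continuous and bounded there; one then picks a positive $\phi \in C_{0}(E_{1/4,\,1})$ that strictly dominates the negative part of $M^{-}\Psi$, completing (4). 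The main obstacle I expect is the transitional regularity of $\Psi$: it is only Lipschitz (not $C^{1,1}$) across the level sets $\{|T_{1/4}^{-1}x| = s\}$ and $\{|T_{1/4}^{-1}x| = 3\sqrt{n}\}$, so the subadditivity manipulation above must either be interpreted in the viscosity sense or justified by a brief mollification of $g$ before truncation.
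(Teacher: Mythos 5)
There is a genuine gap, and it sits in your choice of $s$. The truncation level in Corollary \ref{Coroll Barr function 2} is not a free parameter that you may push up into the interval $(\sqrt{n}\,3^{1/(n+\sigma_{\min})},3\sqrt{n})$: it is inherited from Lemma \ref{Barr function 1} and Corollary \ref{Coroll Barr function 1}, where the plateau $\{g\equiv s^{-p}\}$ must sit inside $E_{\frac14,1}$ (in the base lemma the cutoff is at $|x|=\tfrac12$, and $s$ is only ever made \emph{smaller}, to keep the estimates uniform for $\sigma_{\min}$ near $\sigma_0$). This restriction is not cosmetic. If $s>1$, then $g$ attains its global maximum $s^{-p}$ on all of $E_{\frac14,s}$, which with your choice covers most of the annulus $E_{\frac14,3\sqrt{n}}\setminus E_{\frac14,1}$; at an interior point $x$ of that plateau one has $\delta(g,x,y)\le 0$ for every $y$, with strict inequality for large $y$, hence $M^{-}g(x)=-c_\sigma\Lambda\int \delta^{-}(\cdots)<0$. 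The same computation applied to your $\Psi$ gives $M^{-}\Psi<0$ on $E_{\frac14,s}\setminus E_{\frac14,1}$, where $-\phi\equiv 0$ because $\phi$ is supported in $E_{\frac14,1}$; so property (4) fails outright. The large $s$ is not even needed for property (3): with the corollary's small $s$ one has $\mathcal{R}_{\frac14,3}\subset E_{\frac14,\sqrt{n}\,3^{1/(n+\sigma_{\min})}}$, so on that box $g-(3\sqrt{n})^{-p}\ge (\sqrt{n}\,3^{1/(n+\sigma_{\min})})^{-p}-(3\sqrt{n})^{-p}>0$, and a large multiplicative constant does the rest.

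The second problem is the one you flag at the very end and defer; it is not deferrable, because it is the actual content of the construction inside $E_{\frac14,1}$. With the correct small $s$, the plateau truncation $\min(s^{-p},|T_{1/4}^{-1}x|^{-p})$ has a concave kink on $\partial E_{\frac14,s}$, where $\delta(\Psi,x,y)\sim -c|y|$; the corresponding integral against the kernel diverges once some $\sigma_i\ge 1$, so $M^{-}\Psi=-\infty$ there and no continuous $\phi$ can dominate its negative part. This would also break the application in Lemma \ref{Point Estimates 1}, which needs $M^{+}v\ge -\phi-\varepsilon_0$ with $\phi$ bounded. The paper resolves exactly this by replacing the power function on $E_{\frac14,1}$ with a quadratic $q_{p,\sigma}$ glued in a $C^{1,1}$ fashion across $\partial E_{\frac14,1}$ and lying above $g-(3\sqrt{n})^{-p}$, so that Lemma \ref{I is C^{1,1}} yields continuity, hence boundedness, of $M^{-}\Psi$ on $\overline{E_{\frac14,1}}$, while the comparison $\Psi\ge \tilde c\,(g-(3\sqrt{n})^{-p})$ with equality on the annulus still gives $M^{-}\Psi\ge \tilde c\,M^{-}g\ge 0$ there. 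Your worry about the outer level set $\partial E_{\frac14,3\sqrt{n}}$, on the other hand, is harmless for the same monotonicity reason: $\delta(\Psi,x,\cdot)\ge \tilde c\,\delta(g,x,\cdot)$ at annulus points regardless of the lack of smoothness of the positive part across that boundary.
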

\begin{proof}
We define the function $\Psi:\mathbb{R}^{n} \rightarrow \mathbb{R}$ by
$$
\Psi\left( x \right) = \tilde{c} \left \{ 
\begin{array}{lll}
0, & \text{ in } & \mathbb{R}^{n} \setminus E_{\frac{1}{4},3\sqrt{n}} \\
| T^{-1}_{\frac{1}{4}}x |^{-p} -  \left( 3\sqrt{n}\right)^{-p}  & \text{ in } & E_{\frac{1}{4},3\sqrt{n}} \setminus E_{\frac{1}{4},1} \\
q_{p,\sigma},  & \text{ in } & E_{\frac{1}{4},1},
\end{array}
\right.
$$
where $q_{p,\sigma}$ is a quadratic function with different coefficients in different directions so that $\Psi$ is $C^{1,1}$ across $E_{\frac{1}{4},1}$.  Choose $ \tilde{c}>0$ such that $\Psi > 3 $ in $\mathcal{R}_{\frac{1}{4},3}$. By Lemma \ref{I is C^{1,1}}, 
$$M^{-}\Psi \in C\left( E_{\frac{1}{4},3\sqrt{n}} \right)$$ 
and, from Corollary \ref{Coroll Barr function 2}, we get $M^{-}\Psi \geq 0$ in $\mathbb{R}^{n} \setminus E_{\frac{1}{4},1}$. The lemma is proved.
\end{proof}
    
\section{Harnack inequality and regularity} \label{Harnack Inequality Section}

The next lemma is the fundamental tool towards the proof of the Harnack inequality. It bridges the gap between a pointwise estimate and an estimate in measure.

\begin{lemma} 
\label{Point Estimates 1} 
Let $0 < \sigma_{0} < 2$. If $\sigma_{\min} \in \left( \sigma_{0}, 2 \right) $, then there exist constants $\varepsilon_{0} > 0$, $0 < \varsigma < 1 $, and $M > 1$, depending only $\sigma_{0}$, $\lambda$, $\Lambda$ and $n$, such that if
\begin{enumerate}
\item $u \geq 0$ in $\mathbb{R}^{n}$;
\medskip
\item $u\left( 0 \right) \leq 1$;
\medskip
\item $M^{-}u \leq \varepsilon_{0}$ in $E_{\frac{\left( 3\sqrt{n}\right)^{n+2}}{4},1}$,
\end{enumerate} 
then 
$$| \left\lbrace u \leq M \right\rbrace \cap Q_{1}| > \varsigma.$$
\end{lemma}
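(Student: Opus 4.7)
The plan is to mimic the Caffarelli--Silvestre strategy (Lemma 10.1 in \cite{CS}), substituting the anisotropic barrier from Lemma \ref{Exist barr func 5} and the anisotropic ABP estimate (Theorem \ref{ABP Nonlocal theorem}). Concretely, I set $w:=\Psi-u$ with $\Psi$ given by Lemma \ref{Exist barr func 5}. Since $0\in \mathcal{R}_{\frac{1}{4},3}$ where $\Psi>3$, and $u(0)\leq 1$, we have $w(0)>2$; since $\Psi$ vanishes outside $E_{\frac{1}{4},3\sqrt{n}}$ and $u\geq 0$, we have $w\leq 0$ outside $E_{\frac{1}{4},3\sqrt{n}}$. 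A quick comparison of semi-axes (using $(n+2)/(n+\sigma_i)>1$) shows $E_{\frac{1}{4},3\sqrt{n}}\subset E_{\frac{(3\sqrt{n})^{n+2}}{4},1}$, so on a neighbourhood of the support of $\Psi$ we may combine $M^{-}\Psi\geq -\phi$ with $M^{-}u\leq \varepsilon_{0}$ via Lemma \ref{Comp. princ.} to obtain $M^{+}w\geq -(\phi+\varepsilon_{0})$ in the viscosity sense.

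Next I would apply Theorem \ref{ABP Nonlocal theorem} to $w$ with right-hand side $g:=\phi+\varepsilon_{0}$, which is bounded by a universal constant once $\varepsilon_{0}\leq 1$. This yields the disjoint rectangle family $\{\mathcal{R}_{j}\}$ covering the contact set $\{w=\Gamma_{w}\}$, with diameters controlled by property (4), gradient image bounds from property (5), and the key measure estimate (6) on each dilate $C\tilde{\mathcal{R}}_{j}$. Because $\sup w\geq w(0)>2$, Corollary \ref{ABP cor 2} applied to $w$ furnishes a universal lower bound on $|\bigcup \nabla\Gamma_{w}(\overline{\mathcal{R}}_{j})|$, so summing (5) gives $\sum_{j}|\mathcal{R}_{j}|\geq c>0$ with $c=c(n,\lambda,\Lambda)$, independent of the $\sigma_i$.

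Now I extract the pointwise bound on $u$. On the set
\[A_{j}:=\Bigl\{y\in C\tilde{\mathcal{R}}_{j}:\ w(y)\geq \Gamma_{w}(y)-C\bigl(\max_{\overline{\mathcal{R}}_{j}}g\bigr)\tilde{d}_{j}^{2}\Bigr\},\]
which by property (6) has measure at least $\varsigma|\tilde{\mathcal{R}}_{j}|$, one has
\[u=\Psi-w\leq \|\Psi\|_{\infty}-\Gamma_{w}+C\bigl(\max g\bigr)\tilde{d}_{j}^{2}\leq M,\]
using $\Gamma_{w}\geq 0$ (concave envelope of $w^{+}$), $\|\Psi\|_{\infty}\leq C$, the universal bound on $\max g$, and the diameter bound in property (4). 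Summing the lower measure estimates over $j$ and using the comparability of $\mathcal{R}_{j}$ with $\tilde{\mathcal{R}}_{j}$ yields $|\{u\leq M\}\cap Q_{1}|\geq \varsigma'$ for some universal $\varsigma'>0$, provided the constant $\rho_{0}$ in Theorem \ref{ABP Nonlocal theorem} is chosen so that the rectangles $\overline{\mathcal{R}}_{j}$ (and their dilates $C\tilde{\mathcal{R}}_{j}$) are all contained in $Q_{1}$.

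The principal technical difficulty will be bookkeeping of the anisotropic scales. Three points demand care: (i) the nesting $E_{\frac{1}{4},3\sqrt{n}}\subset E_{\frac{(3\sqrt{n})^{n+2}}{4},1}$ must hold uniformly in the $\sigma_{i}\in(\sigma_{0},2)$, which is why the explicit constant $(3\sqrt{n})^{n+2}/4$ appears in hypothesis (3); (ii) the diameter of each $\mathcal{R}_{j}$ is $\lesssim\sqrt{\sum_i(\rho_{0}2^{-1/q_{\max}})^{2/(n+\sigma_{i})}}$, so $\rho_{0}$ must be fixed small enough (universally, using $\sigma_{\min}>\sigma_{0}$ to keep $q_{\max}$ bounded) that $C\tilde{\mathcal{R}}_{j}\subset Q_{1}$; (iii) passing from $|A_{j}|\gtrsim|\tilde{\mathcal{R}}_{j}|$ to a proportion of $|Q_{1}|$ uses the comparability of $\mathcal{R}_{j}$ and $\tilde{\mathcal{R}}_{j}$, which also requires $\sigma_{\min}>\sigma_{0}$ for a uniform ratio. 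Once these scale-tracking issues are handled, the constants $\varepsilon_{0}$, $\varsigma$, and $M$ emerge depending only on $n$, $\lambda$, $\Lambda$ and $\sigma_{0}$.
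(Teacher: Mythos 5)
Your overall strategy coincides with the paper's: set $v=\Psi-u$, use the barrier of Lemma \ref{Exist barr func 5} and Lemma \ref{Comp. princ.} to get $M^{+}v\geq -(\phi+\varepsilon_{0})$, apply the anisotropic ABP machinery to extract rectangles with total measure bounded below, and read off $u\leq M$ on a fixed fraction of each dilated rectangle. However, there is a genuine gap in your final localization step. You derive $\sum_{j}|\mathcal{R}_{j}|\geq c$ over \emph{all} rectangles of the family and then assert that the conclusion follows ``provided $\rho_{0}$ is chosen so that the rectangles $\overline{\mathcal{R}}_{j}$ (and their dilates $C\tilde{\mathcal{R}}_{j}$) are all contained in $Q_{1}$.'' This cannot be arranged: $\rho_{0}$ controls only the diameters of the rectangles, not their locations, and the family covers the entire contact set $\{v=\Gamma\}$, which may spread over all of $E_{\frac{1}{4},3\sqrt{n}}$ --- a set not contained in $Q_{1}$. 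Without localization, the rectangles carrying the mass $c$ could sit entirely outside $Q_{1}$, and the sets $A_{j}$ on which you prove $u\leq M$ would contribute nothing to $|\{u\leq M\}\cap Q_{1}|$.

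The missing ingredient is the second, decisive role of the smallness of $\varepsilon_{0}$ (you only use $\varepsilon_{0}\leq 1$ to bound the right-hand side $g=\phi+\varepsilon_{0}$ from above). In the paper's argument the chain $2\leq C_{1}\varepsilon_{0}+\bigl(\sum_{j}(\max_{\overline{\mathcal{R}}_{j}}\phi^{+})^{n}|\mathcal{R}_{j}|\bigr)^{1/n}$ is exploited as follows: for rectangles not meeting $\mathrm{supp}\,\phi=E_{\frac{1}{4},1}$ the factor $\max\phi^{+}$ vanishes, so after absorbing the $C_{1}\varepsilon_{0}$ term (here $\varepsilon_{0}$ small is essential) one obtains $c\leq \sum_{\mathcal{R}_{j}\cap E_{\frac{1}{4},1}\neq\emptyset}|\mathcal{R}_{j}|$. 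Only for these rectangles --- which touch $E_{\frac{1}{4},1}$ and have universally small diameter --- does one get $C\tilde{\mathcal{R}}_{j}\subset B_{\frac{1}{2}}\subset Q_{1}$, and then the measure estimate (6) of Theorem \ref{ABP Nonlocal theorem}, together with the finite-overlap covering lemma (needed to pass from $\sum_{j}|A_{j}|$ to the measure of the union, a point you also gloss over), yields $|\{u\leq M\}\cap Q_{1}|\geq\varsigma$. With this restriction of the sum inserted, the rest of your argument goes through as written.
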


\begin{proof}

Let $v= \Psi - u$ and let $\Gamma$ be the concave envelope of $v$ in $E_{\frac{\left( 3\sqrt{n}\right)^{n+2}}{4},3}$. We have 
$$
M^{+}v \geq  M^{-}\Psi -  M^{-}u \geq - \phi - \varepsilon_{0} \quad \text{in} \ \ E_{\frac{\left( 3\sqrt{n}\right)^{n+2}}{4},1}. 
$$
Applying Theorem \ref{ABP Nonlocal theorem} to $v$ (anisotropically scaled), we obtain a family of rectangles $\mathcal{R}_{j}$ such that 
$$
 \sup \limits_{E_{\frac{\left( 3\sqrt{n}\right)^{n+2}}{4},1}}v \leq C \left |\nabla \Gamma \left(E_{\frac{\left( 3\sqrt{n}\right)^{n+2}}{4},1}\right) \right |^{\frac{1}{n}}. 
$$
Thus, by Theorem \ref{ABP Nonlocal theorem} and condition (3) in Lemma \ref{Exist barr func 5}, we obtain
\begin{eqnarray*}
\sup \limits_{E_{\frac{\left( 3\sqrt{n}\right)^{n+2}}{4},1}}v &\leq & C \left |\nabla \Gamma \left(  E_{\frac{\left( 3\sqrt{n}\right)^{n+2}}{4},1} \right) \right |^{\frac{1}{n}} \\ 
&\leq & C_{1} \left( \sum_{i=1}^{n} \left | \nabla \Gamma \left( \mathcal{R}_{j} \right) \right | \right)^{\frac{1}{n}} \\ 
&\leq &  C_{1} \left( \sum_{i=1}^{n} \left( \max_{\overline{\mathcal{R}}_{j}} \left( \phi + \varepsilon_{0}\right) ^{+} \right)^{n}\left | \mathcal{R}_{j} \right |\right)^{\frac{1}{n}}\\ 
&\leq &  C_{1}\varepsilon_{0} + \left( \sum_{i=1}^{n} \left( \max_{\overline{\mathcal{R}}_{j}} \left( \phi \right) ^{+} \right)^{n}\left | \mathcal{R}_{j} \right |\right)^{\frac{1}{n}}.
\end{eqnarray*}
Furthermore, since $\Psi > 3$ in $E_{\frac{\left( 3\sqrt{n}\right)^{n+2}}{4},1} \supset \mathcal{R}_{\frac{1}{4},3}$ and $u\left( 0 \right) \leq 1$, we get
$$
2 \leq  C_{1}\varepsilon_{0} + \left( \sum_{i=1}^{n} \left( \max_{\overline{\mathcal{R}}_{j}} \left( \phi \right)^{+} \right)^{n}\left | \mathcal{R}_{j} \right |\right)^{\frac{1}{n}}.
$$
If $\varepsilon_{0} >0$ is small enough, we have
\begin{equation}
\label{PE 1}
c \leq  \left( \sum \limits_{\mathcal{R}_{j} \cap E_{\frac{1}{4},1}\neq \emptyset}  \left |\mathcal{R}_{j} \right |\right),
\end{equation}
where we used that $\phi$ is supported in $ E_{\frac{1}{4},1}$. We also have that the diameter of $\mathcal{R}_{j}$ is bounded by $\left( \rho_{0}=\frac{1}{C}\right)^{\frac{1}{n+2}}$. Then, if $\mathcal{R}_{j} \cap E_{\frac{1}{4},1}\neq \emptyset$ we have $C\tilde{\mathcal{R}}_{j} \subset B_{\frac{1}{2}}$. By Theorem \ref{ABP Nonlocal theorem}, we get 
\begin{eqnarray}
\label{PE 2}
& & \left | \left\lbrace y \in C\tilde{\mathcal{R}}_{j}: v\left( y \right) \geq \Gamma \left( y \right) - C\rho_{0}^{\frac{2}{n+2}} \right\rbrace \right | \nonumber \\
& \geq &\left | \left\lbrace y \in C\tilde{\mathcal{R}}_{j} : v\left( y \right) \geq \Gamma \left( y \right) - Cd^{2}_{j}  \right\rbrace \right | \nonumber\\ 
& \geq & \varsigma \left | \mathcal{R}_{j} \right |,
\end{eqnarray}
where we used that $Cd^{2}_{j} < C\rho_{0}^{\frac{2}{n+2}}$. For each rectangles $\mathcal{R}_{j}$ that intersects $E_{\frac{1}{4},1}$ we consider $C\tilde{\mathcal{R}_{j}}$. The family $\left\lbrace C\tilde{\mathcal{R}_{j}}\right\rbrace$ is an  open covering for $\bigcup_{i=1}^{m}\overline{\mathcal{R}}_{j}$. We consider a subcover with finite overlapping (Lemma \ref{covering lemma}) that also covers $\bigcup_{i=1}^{m}\overline{\mathcal{R}}_{j}$. Then, using \eqref{PE 1} and \eqref{PE 2} we obtain
\begin{eqnarray*}
\label{PE 3}
& & \left | \left\lbrace y \in B_{\frac{1}{2}} : v\left( y \right) \geq \Gamma \left( y \right) - C\rho_{0}^{\frac{2}{n+2}} \right\rbrace \right | \\
& \geq & \left | \bigcup_{j=1}^{m} \left\lbrace y \in  C\tilde{\mathcal{R}}_{j} : v\left( y \right) \geq \Gamma \left( y \right) - C\rho_{0}^{\frac{2}{n+2}} \right\rbrace \right | \\ 
& \geq & C_{1}\sum_{j=1}^{m}\left | \left\lbrace y \in  C\tilde{\mathcal{R}}_{j} : v\left( y \right) \geq \Gamma \left( y \right) - C\rho_{0}^{\frac{2}{n+2}} \right\rbrace \right |\\ 
& \geq & C_{1}c_{1}.
\end{eqnarray*}
We recall that $B_{\frac{1}{2}} \subset Q_{1}$ and $\Gamma \geq 0$. Hence, if $M:= \sup \limits_{B_{\frac{1}{2}}}\Psi + C\rho_{0}^{\frac{2}{n+2}}$, we have
\begin{eqnarray*}
\label{PE 4}
\left | \left\lbrace y \in Q_{1} : u\left( y \right) \leq M \right\rbrace \right | & \geq & \left | \left\lbrace y \in B_{\frac{1}{2}} : u\left( y \right) \leq M  \right\rbrace \right | \\ 
& \geq &\left | \left\lbrace y \in B_{\frac{1}{2}} : v\left( y \right) \geq \Gamma \left( y \right) - C\rho_{0}^{\frac{2}{n+2}} \right\rbrace \right | \\ 
& \geq & c.
\end{eqnarray*}
\end{proof}

The next lemma is crucial to iterate Lemma \ref{Point Estimates 1} and to obtain the $L_{\varepsilon}$ decay of the distribution function $\lambda_{u} := \left |\left\lbrace  u > t \right\rbrace \cap B_{1} \right |$. Since our scaling is anisotropic, the following Calder\'on-Zygmund decomposition is performed with boxes that satisfy the covering lemma of Caffarelli-Calder\'on (Lemma \ref{covering lemma}). We can then apply Lebesgue's differentiation theorem having these boxes as a differentiation basis.

If $Q$ is a dyadic cube different from $Q_{1}$, we say that $Q_{pred}$ is the predecessor of $Q$ if $Q$ is one of the $2^{n}$ cubes obtained from dividing $Q_{pred}$. We recall from section \ref{ABP Estimate} that if $Q$ is a cube then $\tilde{Q}$ is the cube corresponding to $Q$.

\begin{lemma} [Calder\'on-Zygmund]
\label{ANISOTROPIC DECOMPOSITION CALDERÓN-ZYGMUND} 
Let $A \subset B \subset Q_{1}$ be measurable sets and $0 < \delta < 1$ be such that
\begin{enumerate}
\item $\left | A \right | \leq \delta$;
\medskip
\item if $Q$ is a dyadic cube such that $\left | A \cap \tilde{Q} \right | > \delta \left | \tilde{Q} \right |$, then $\left( \tilde{Q}\right)_{pred} \subset B$.
\end{enumerate} 
Then 
$$\left | A \right |  \leq \delta C \left | B \right |,$$
where $C>0$ is a constant depending only on $n$.  
\end{lemma}

\begin{proof}
Just as in \cite[Lemma 4.2.]{CC}, using Lebesgue's differentiation theorem, we obtain a sequence of boxes $R_{j}$ satisfying
\begin{enumerate}
\item $\left | A \cap R_{j} \right | \leq  \delta \left | R_{j} \right |$;
\medskip
\item $A \subset \bigcup_{j=1}^{\infty} R_{j}$.
\end{enumerate}
Then, we have
$$\left | A  \right | \leq \sum_{j=1}^{\infty} \left | A \cap R_{j} \right | \leq \delta \sum_{j=1}^{\infty} \left | R_{j} \right | \leq C\delta \left | B  \right |,$$
where $C=C\left( n \right)>0$ is the constant from Lemma \ref{covering lemma}. 
\end{proof}

\begin{lemma} 
\label{Point Estimates 2}
Let $u$ be as in Lemma \ref{Point Estimates 1}. Then
$$
\left | \left\lbrace u > M^{k} \right\rbrace \cap Q_{1} \right | \leq C\left( 1 - \varsigma \right)^{k}, \quad k=1, \dots,
$$
where $M$ and $\varsigma$ are as in Lemma \ref{Point Estimates 1}. Thus, there exist positive universal constants $d$ and $\varepsilon$ such that
$$
| \left\lbrace u \geq t \right\rbrace \cap Q_{1}| \leq dt^{-\varepsilon}, \quad \forall t > 0.
$$
\end{lemma}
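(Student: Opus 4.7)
The plan is to iterate Lemma \ref{Point Estimates 1} via the anisotropic Calder\'on--Zygmund decomposition (Lemma \ref{ANISOTROPIC DECOMPOSITION CALDER�N-ZYGMUND}), and then convert the resulting geometric decay of level sets into the $L^{\varepsilon}$ estimate by a direct interpolation. Write $A_{k} := \{u > M^{k}\} \cap Q_{1}$. I will prove by induction on $k$ that $|A_{k}| \leq C(1-\bar\varsigma)^{k}$ for some universal $\bar\varsigma \in (0,1)$. For the base case, Lemma \ref{Point Estimates 1} applied directly to $u$ yields $|\{u \leq M\} \cap Q_{1}| \geq \varsigma$, hence $|A_{1}| \leq \delta := 1 - \varsigma$. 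For the inductive step I will apply Lemma \ref{ANISOTROPIC DECOMPOSITION CALDER�N-ZYGMUND} with $A := A_{k}$ and $B := A_{k-1}$: condition (1) follows at once from $A_{k} \subset A_{1}$ and the base case, so the real content is condition (2).

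To verify (2), suppose $\tilde Q$ is an anisotropic dyadic cube with $|A_{k} \cap \tilde Q| > \delta|\tilde Q|$, and argue by contradiction that its predecessor lies entirely in $A_{k-1}$. If not, pick $\bar x \in (\tilde Q)_{pred}$ with $u(\bar x) \leq M^{k-1}$, and let $\mathcal{T}$ be the anisotropic dilation $\mathcal T e_{i} = r^{1/(n+\sigma_{i})} e_{i}$ calibrated so that $\mathcal T$ maps the reference configuration in Lemma \ref{Point Estimates 1} onto $(\tilde Q)_{pred}$. Define
$$
\tilde u(y) := \frac{u(\mathcal T y + x_{Q})}{M^{k-1}},
$$
where $x_{Q}$ is the centre of $(\tilde Q)_{pred}$. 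The anisotropic kernel bounds \eqref{Kernel cond 2} are invariant under $\mathcal T$ (this is the entire reason for the degree-$(n+\sigma_{i})$ scaling in direction $e_{i}$), so $M^{-}$ scales covariantly; combined with $u \geq 0$ and $u(\bar x)/M^{k-1} \leq 1$, this shows that $\tilde u$ satisfies the three hypotheses of Lemma \ref{Point Estimates 1}, the factor $M^{-(k-1)} \leq 1$ putting the right-hand side below $\varepsilon_{0}$. The oversized constant $\frac{(3\sqrt n)^{n+2}}{4}$ in Lemma \ref{Point Estimates 1} is chosen precisely so that $\mathcal T\bigl(E_{(3\sqrt n)^{n+2}/4,1}\bigr)$ remains inside the region on which the equation is available after the CZ dyadic subdivision. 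Lemma \ref{Point Estimates 1} then delivers $|\{\tilde u \leq M\} \cap Q_{1}| \geq \varsigma$, which pulls back to $|A_{k} \cap \tilde Q| \leq (1-\varsigma)|\tilde Q| = \delta|\tilde Q|$, contradicting the hypothesis of (2).

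With (2) verified, Lemma \ref{ANISOTROPIC DECOMPOSITION CALDER�N-ZYGMUND} yields $|A_{k}| \leq \widehat C \delta |A_{k-1}|$, and by iterating the point estimate a bounded number of times up front if needed (replacing $M$ by $M^{m}$ and $\varsigma$ by $1-(1-\varsigma)^{m}$) we may assume $\widehat C \delta < 1$, closing the induction with $1-\bar\varsigma := \widehat C \delta$. For the $L^{\varepsilon}$ bound, given $t > 0$ choose $k$ with $M^{k-1} \leq t < M^{k}$; then
$$
|\{u \geq t\} \cap Q_{1}| \leq |A_{k-1}| \leq C(1-\bar\varsigma)^{k-1} \leq d\, t^{-\varepsilon}
$$
with $\varepsilon := -\log(1-\bar\varsigma)/\log M > 0$ and $d$ universal. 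The main obstacle is setting up the anisotropic rescaling in step (2) so that both the extremal operator $M^{-}$ and the enlarged ellipse $E_{\frac{(3\sqrt n)^{n+2}}{4},1}$ of Lemma \ref{Point Estimates 1} are correctly respected under the CZ subdivision; once this geometric bookkeeping is arranged, the rest is the standard Calder\'on--Zygmund doubling argument.
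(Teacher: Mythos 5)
Your strategy --- iterating Lemma \ref{Point Estimates 1} through the anisotropic Calder\'on--Zygmund lemma and then interpolating dyadically in $t$ --- is precisely the one the paper sets up for this statement, and your scaling computation for $M^{-}$ is right (the geometric factor $r^{1-\sum_i \frac{1}{n+\sigma_i}}\leq 1$ works in your favour). Two steps, however, do not close as written. In verifying condition (2) you calibrate $\mathcal{T}$ so that the reference configuration of Lemma \ref{Point Estimates 1} maps onto $(\tilde Q)_{pred}$ and then assert that $u(\bar x)\leq M^{k-1}$ gives hypothesis (2) of that lemma. But hypothesis (2) reads $u(0)\leq 1$, i.e.\ the small value must sit at the \emph{centre} of the configuration, whereas $\bar x$ is an arbitrary point of the predecessor; moreover, if the conclusion cube $Q_1$ of the rescaled lemma corresponds to $(\tilde Q)_{pred}$, the density estimate you obtain lives on the predecessor, so the good set could be concentrated in a sibling of $\tilde Q$ and you cannot deduce $|A_k\cap\tilde Q|\leq(1-\varsigma)|\tilde Q|$ (passing from the predecessor to the child costs a factor $2^n$, which destroys the estimate). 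The repair is to calibrate $\mathcal{T}$ so that $Q_1$ corresponds to the child $\tilde Q$ itself and to invoke the strengthened form of the point estimate in which $u(0)\leq 1$ is replaced by $\inf u\leq 1$ over a fixed dilate of $Q_1$ large enough to contain the rescaled predecessor; this strengthening does follow from the proof of Lemma \ref{Point Estimates 1} (the barrier $\Psi$ of Lemma \ref{Exist barr func 5} exceeds $3$ on all of $\mathcal{R}_{\frac{1}{4},3}$, so $v=\Psi-u\geq 2$ wherever $u\leq 1$ in that set), but it is not the statement you quote, and it must be made explicit.

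Second, your remedy for the covering constant $\widehat C$ is circular: replacing $M$ by $M^{m}$ and $\varsigma$ by $1-(1-\varsigma)^{m}$ amounts to asserting $|\{u>M^{m}\}\cap Q_1|\leq(1-\varsigma)^{m}$, which is exactly the case $k=m$ of the estimate you are trying to prove; Lemma \ref{Point Estimates 1} alone yields only the \emph{same} $\varsigma$ at every threshold $M^{m}$. The induction does require $\widehat C(1-\varsigma)<1$ to produce geometric decay, and since $\widehat C$ is the overlap constant coming from Lemma \ref{covering lemma} there is no reason for this to hold automatically. This must be resolved non-circularly, for instance by arranging the Calder\'on--Zygmund selection over genuinely disjoint dyadic rectangles so that $\widehat C=1$, as in the classical Lemma 4.2 of \cite{CC}.
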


Using standard covering arguments we get the following theorem.
\begin{theorem}
\label{Point Estimates 3}
Let $u \geq 0$ in $\mathbb{R}^{n}$, $u\left( 0 \right) \leq 1$ and $M^{-}u \leq \varepsilon_{0}$ in $B_{2}$. Suppose that $\sigma_{\min} \geq \sigma_{0}$ for some $\sigma_{0} >0$. Then
$$
| \left\lbrace u \geq t \right\rbrace \cap B_{1}| \leq Ct^{-\varepsilon}, \quad \forall t > 0,
$$
where $C=C\left( n, \lambda, \Lambda, \sigma_{0} \right) >0$ and $\varepsilon=\varepsilon\left( n, \lambda, \Lambda, \sigma_{0} \right) >0$.
\end{theorem}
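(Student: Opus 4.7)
The plan is to reduce the theorem to Lemma \ref{Point Estimates 2}, which delivers the same $L^{\varepsilon}$-decay but on $Q_1$ and under the stronger hypothesis $M^{-}u \leq \varepsilon_0$ on the anisotropic ellipse $E_{(3\sqrt{n})^{n+2}/4,1}$. Two adjustments are needed: replacing the anisotropic ellipse by the isotropic ball $B_2$, and replacing $Q_1$ by $B_1$. The lower bound $\sigma_{\min} \geq \sigma_0$ is what makes the reduction quantitative, since every semi-axis of the ellipse has length at most $((3\sqrt{n})^{n+2}/4)^{1/(n+\sigma_0)} =: L(n,\sigma_0)$, so that $E_{(3\sqrt{n})^{n+2}/4,1} \subset B_{L\sqrt{n}}$.

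First I would rescale isotropically, setting $v(y) := u(\rho y)$ with $\rho = \rho(n,\sigma_0)>0$ chosen small enough that $\rho L \sqrt{n} \leq 2$. Then $v \geq 0$, $v(0) = u(0) \leq 1$, and the hypothesis $M^{-}u \leq \varepsilon_0$ on $B_2$ translates, after a direct computation of the rescaled kernel $\rho^{-n} K(\cdot/\rho)$, into a bound of the form $M^{-}v \leq \tilde{\varepsilon}_0$ on $E_{(3\sqrt{n})^{n+2}/4,1}$. The key quantitative point is that for $\rho$ bounded away from $0$ in terms of $n,\sigma_0$ and for $\sigma_i \in [\sigma_0,2)$, the rescaled kernel is comparable to a kernel of $\mathcal{L}_0$ with effective ellipticity constants $\lambda' = \lambda'(\lambda, \rho, \sigma_0)$ and $\Lambda' = \Lambda'(\Lambda)$, depending only on $n,\lambda,\Lambda,\sigma_0$. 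After a harmless renormalisation of $v$ by a universal constant, Lemma \ref{Point Estimates 2} applies to $v$ and, in the original variables, yields $|\{u > t\} \cap Q_{\rho}| \leq C t^{-\varepsilon}$ with constants depending only on $n,\lambda,\Lambda,\sigma_0$.

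Second, to pass from $Q_\rho$ centered at the origin to the whole unit ball $B_1$, I would cover $B_1$ by a finite family of translates $Q_\rho(x_i)$, the cardinality being controlled by $n$ and $\sigma_0$. For each centre $x_i$, I would repeat the rescaling step with $u(\cdot + x_i)$ in place of $u(\cdot)$: the hypothesis $M^{-}u \leq \varepsilon_0$ remains valid on the shifted ball of radius $2 - |x_i| \geq 1$, which still contains the needed anisotropic ellipse if $\rho$ is further decreased in terms of $\sigma_0$. The normalising bound $u(x_i) \leq \mathrm{const}$ is propagated from $u(0) \leq 1$ by a standard chaining argument, and the loss is absorbed into the constants. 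Summing the per-cube estimates yields the global bound on $B_1$. The main obstacle is the technical verification that the isotropic rescaling is compatible with the anisotropic extremal operator $M^{-}$: since the kernels $K_{\alpha\beta}$ have direction-dependent homogeneities, isotropic dilations do not preserve $\mathcal{L}_0$ automatically, and one must carefully track how the rescaled kernel sits between two bona fide kernels of $\mathcal{L}_0$, using precisely the bound $\sigma_{\min} \geq \sigma_0$ to prevent degeneracy. Once this is settled, the covering and chaining arguments are routine.
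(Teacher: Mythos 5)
Your proposal is correct and is essentially the paper's own argument: the paper derives Theorem \ref{Point Estimates 3} from Lemma \ref{Point Estimates 2} precisely by the ``standard covering arguments'' you spell out, namely an isotropic rescaling (harmless because $\sigma_{\min}\geq\sigma_0$ only degrades the ellipticity constants by a factor depending on $n,\sigma_0$) followed by a finite covering of $B_1$ with a chaining step to propagate the normalisation $u(0)\leq 1$. Your identification of the kernel-rescaling issue as the only non-routine point, and its resolution via the lower bound on $\sigma_{\min}$, matches the intended proof.
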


\begin{remark}
For each $s > 0$, we will denote $E^{j}_{r,s}:= E_{r^{n+\sigma_{j}},s}$. Let $u \geq 0$ in $\mathbb{R}^{n}$ and $M^{-}u \leq C_{0}$ in $E^{j}_{r,2}$, with $0< r \leq 1$. We consider the anisotropic scaling 
$$
v\left( x \right) = \dfrac{u\left( T_{j,r}x\right) }{u\left( 0 \right) + C_{0}r^{\left[ \left( n-1 \right) - \sum_{i=1}^{n-1}\frac{n + \sigma_{n}}{n + \sigma_{i}}\right]}r^{\sigma_{j}} }, \quad x \in \mathbb{R}^{n}, 
$$
where $T_{j,r}:\mathbb{R}^{n}\rightarrow \mathbb{R}^{n}$ is defined by 
$$
T_{j,r}e_{i}:= \left \{ 
\begin{array}{lll}
r e_{j}  , & \text{ for } & i=j \\
r^{\frac{n+\sigma_{j}}{n+\sigma_{i}}}e_{i}, & \text{ for } & i \neq j. 
\end{array}
\right.
$$
We have $v \geq 0$ in $\mathbb{R}^{n}$, $v\left( 0 \right) \leq 1$ and $T_{j,r}\left( B_{2}\right) \subset E^{j}_{r,2}$. Moreover, changing variables, we estimate 
$$M^{-}v\left(  x \right)  =   \dfrac{r^{\sigma_{j}} r^{\left[ \left( n-1 \right) - \sum_{i=1}^{n-1}\frac{n + \sigma_{j}}{n + \sigma_{i}}\right]}}{u\left( 0 \right) + C_{0}r^{\left[ \left( n-1 \right) - \sum_{i=1}^{n-1}\frac{n + \sigma_{j}}{n + \sigma_{i}}\right]}r^{\sigma_{j}}} M^{-}u\left( T_{j,r}x \right)  \leq  1,$$
for all $x \in B_{2}$.
\end{remark}
Then, using the anisotropic scaling $T_{j, r}$ and Theorem \ref{Point Estimates 3} we have the following scaled version.
\begin{theorem} [Pointwise Estimate]
\label{Point Estimates}
Let $u \geq 0$ in $\mathbb{R}^{n}$ and $M^{-}u \leq C_{0}$ in $E^{j}_{r,2}$. Suppose that $\sigma_{\min} \geq \sigma_{0}$ for some $\sigma_{0} >0$. Then
$$
| \left\lbrace u \geq t \right\rbrace \cap E^{j}_{r,1}| \leq \nonumber C|E^{j}_{r,1}|\left( u\left( 0 \right) + C_{0}r^{\left[ \left( n-1 \right) - \sum_{i=1}^{n-1}\frac{n + \sigma_{j}}{n + \sigma_{i}}\right]}r^{\sigma_{j}} \right)^{\varepsilon} t^{-\varepsilon}  \quad \forall t > 0
$$
where $C=C\left( n, \lambda, \Lambda, \sigma_{0} \right) >0$ and $\varepsilon=\varepsilon\left( n, \lambda, \Lambda, \sigma_{0} \right) >0$.
\end{theorem}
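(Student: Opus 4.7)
The plan is to reduce the claim to Theorem~\ref{Point Estimates 3} via the anisotropic rescaling $T_{j,r}$ introduced in the preceding remark. Setting $v(x) := u(T_{j,r}x)/A$ with $A := u(0) + C_0 r^{(n-1) - \sum_{i=1}^{n-1}(n+\sigma_j)/(n+\sigma_i)} r^{\sigma_j}$, the remark verifies that $v \geq 0$ in $\mathbb{R}^n$, $v(0) \leq 1$, and $M^- v \leq 1$ in $B_2$. The essential point behind the last bound is that under $T_{j,r}$ the anisotropic kernels transform as $K_{\alpha\beta}(T_{j,r}^{-1}z) = r^{n+\sigma_j} K_{\alpha\beta}(z)$, so that the resulting scaling prefactor is independent of $\alpha,\beta$ and passes cleanly through the inf defining $M^-$, producing exactly the power of $r$ sitting in $A$.

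Because Theorem~\ref{Point Estimates 3} requires $M^- \leq \varepsilon_0$ rather than $\leq 1$, I would apply it to $\hat v := \varepsilon_0 v$, which satisfies $\hat v \geq 0$, $\hat v(0) \leq \varepsilon_0 \leq 1$, and $M^- \hat v \leq \varepsilon_0$ in $B_2$. Undoing the $\varepsilon_0$ factor and absorbing $\varepsilon_0^{-\varepsilon}$ into the universal constant yields $|\{v \geq s\} \cap B_1| \leq C s^{-\varepsilon}$ for all $s > 0$. I then undo the anisotropic scaling: inspecting the definitions one has $T_{j,r}(B_\rho) = E^j_{r,\rho}$ for every $\rho > 0$, so the linear change of variables gives $|\{u \geq sA\} \cap E^j_{r,1}| = |\det T_{j,r}|\,|\{v \geq s\} \cap B_1|$ while $|E^j_{r,1}| = |\det T_{j,r}|\,|B_1|$; the Jacobian cancels, and setting $t = sA$ produces the stated estimate with the claimed dependence on $A$.

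The one piece of algebra that demands care, already supplied inside the preceding remark, is the identity $(n+\sigma_j)\bigl(1 - \sum_{i=1}^{n} 1/(n+\sigma_i)\bigr) = \sigma_j + (n-1) - \sum_{i \neq j}(n+\sigma_j)/(n+\sigma_i)$, which matches the natural scaling exponent of $M^-$ under $T_{j,r}$ to the exponent of $r$ that appears in the denominator $A$. Without this match neither $v(0) \leq 1$ nor $M^- v \leq 1$ would emerge from the single choice of $A$; with it in place, the rest of the argument is a routine normalization followed by a change of variables.
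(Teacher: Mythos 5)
Your proposal is correct and is essentially the paper's own argument: the paper obtains this theorem precisely by the rescaling $v = u(T_{j,r}\,\cdot\,)/A$ set up in the remark immediately preceding it, followed by an application of Theorem \ref{Point Estimates 3} and the change of variables you describe (you in fact supply more detail, e.g.\ the $\varepsilon_{0}$ normalization, than the paper does). The only cosmetic point is that the identity $K_{\alpha\beta}(T_{j,r}^{-1}z) = r^{n+\sigma_{j}}K_{\alpha\beta}(z)$ holds exactly only for the comparison kernel; for a general kernel in the class one uses instead that the two-sided bounds scale exactly, so $\mathcal{L}_{0}$ is invariant and the prefactor still passes through the infimum defining $M^{-}$, which is clearly what you intend.
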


We are now ready to prove the Harnack inequality.

\begin{theorem}[Harnack Inequality]
\label{Harnack Inequality}
Let $u \geq 0$ in $\mathbb{R}^{n}$, $M^{-}u \leq C_{0}$, and $M^{+}u \geq -C_{0}$ in $B_{2}$. Suppose that $\sigma_{\min} \geq \sigma_{0}$, for some $\sigma_{0} >0$. Then 
$$u \leq C \left( u\left( 0 \right) + C_{0}  \right) \quad  in \ \ B_{\frac{1}{2}}.$$
\end{theorem}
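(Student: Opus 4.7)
The plan is to follow the Caffarelli--Silvestre strategy for Harnack: combine the $L^{\varepsilon}$-type decay from Theorem \ref{Point Estimates} with an iterative ``growing lemma'' powered by the anisotropic barrier of Lemma \ref{Exist barr func 5}. First I would normalise: since $M^{\pm}$ are positively homogeneous, after dividing $u$ by a large universal constant times $(u(0)+C_0)$ I may assume $u(0)\leq 1$ and $C_0\leq \varepsilon_0$, where $\varepsilon_0$ is the constant from Lemma \ref{Point Estimates 1}. It then suffices to show $\sup_{B_{1/2}} u\leq C$ for a universal $C$, and I will argue by contradiction: suppose there is $z_0\in B_{1/2}$ with $u(z_0)=N_0$ arbitrarily large.

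The core is a growing step: given $z_k\in B_{3/4}$ with $u(z_k)=N_k$ large, I claim there exist $\theta>1$ universal and $z_{k+1}$ in a small anisotropic ellipse around $z_k$ such that $u(z_{k+1})\geq \theta N_k$. To prove it, choose an index $j$ and a radius $r_k$ so that the pointwise estimate of Theorem \ref{Point Estimates}, applied around $z_k$ after the anisotropic rescaling of the remark preceding it, yields
\[
\bigl|\{u>\tfrac12 N_k\}\cap E^{j}_{r_k,1}(z_k)\bigr|\ \leq\ \tfrac{1}{2}|E^{j}_{r_k,1}|,
\]
which requires $r_k$ comparable to a negative power of $N_k$. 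If no such $z_{k+1}$ existed in $E^{j}_{r_k,s}(z_k)$ for a suitable $s>1$, then on that ellipse $u\leq \theta N_k$ on a large measure set and $u\leq N_k$ outside; translating and rescaling the barrier $\Psi$ of Lemma \ref{Exist barr func 5} to $E^{j}_{r_k,s}(z_k)$ and applying Lemma \ref{Comp. princ.} to $\theta N_k\Psi-u$, together with the $M^{+}u\geq -C_0$ information and the nonlocal ABP estimate (Theorem \ref{ABP Nonlocal theorem}) on the resulting subsolution, forces $u(z_k)<N_k$, contradicting the choice of $z_k$.

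Iterating gives $N_k=\theta^k N_0\to\infty$ with $z_k$ lying in successively small anisotropic ellipses. Since $r_k$ decays geometrically in $k$, the total displacement $\sum_k \max_i r_k^{1/(n+\sigma_i)}$ is bounded by $1/4$ provided $N_0$ was taken large enough; hence $z_k\in B_{3/4}$ for all $k$ and converges to some $z_\infty\in \overline{B}_{3/4}$. But Theorem \ref{Point Estimates 3} gives $|\{u\geq N_k\}\cap B_1|\leq C N_k^{-\varepsilon}\to 0$, while by construction each ellipse $E^{j}_{r_k,s}(z_k)$ contains the point $z_{k+1}$ with $u(z_{k+1})\geq \theta N_k\geq N_k$, and in fact by the growing step a definite fraction of $E^{j}_{r_k,s}(z_k)$ lies in $\{u\geq \tfrac12 N_k\}$. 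Comparing the measure lower bound from the covering lemma (Lemma \ref{covering lemma}) with the decay upper bound produces the contradiction.

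The main obstacle is the anisotropic bookkeeping: at each step one must choose the index $j$ and the radius $r_k$ so that (a) the ellipse $E^{j}_{r_k,s}(z_k)$ is the correct level set for the barrier $\Psi$ of Lemma \ref{Exist barr func 5} after the scaling $T_{j,r_k}$, (b) the anomalous exponent $(n-1)-\sum_{i=1}^{n-1}\frac{n+\sigma_j}{n+\sigma_i}$ appearing in Theorem \ref{Point Estimates} stays uniformly bounded for $\sigma_{\min}\geq \sigma_0$, and (c) the diameters $\max_i r_k^{1/(n+\sigma_i)}$ sum to less than $1/4$ while $r_k$ is still small enough that the measure estimate propagates. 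Once these scale choices are made consistently with respect to $\sigma_0$, the remainder of the argument parallels \cite{CS}.
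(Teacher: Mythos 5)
Your overall strategy (iterate a ``growing step'' to produce points $z_k$ with $u(z_k)\to\infty$ and contradict the $L^{\varepsilon}$ decay) is structurally different from the paper's, which instead takes the least $t$ with $u\leq f_{t}(x)=t(1-|x|)^{-\tau}$ in $B_{1}$, with $\tau$ tuned to the exponent $\varepsilon$ of Theorem \ref{Point Estimates 3}, and derives a single contradiction at the touching point $x_{0}$ by comparing an upper bound and a lower bound for $\left|\{u>u(x_0)/2\}\cap E^{\max}_{r,1}(x_0)\right|$. That difference alone would be acceptable, but your version has a genuine gap at its core. In the growing step you assert that if no point with $u\geq\theta N_k$ exists near $z_k$ then ``$u\leq\theta N_k$ on a large measure set and $u\leq N_k$ outside,'' and you then feed $\theta N_k\Psi-u$ into the comparison machinery. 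You have no control whatsoever on $u$ outside the small ellipse --- that bound is precisely what the Harnack inequality is trying to establish --- and because the operators are nonlocal this is not a cosmetic issue: the function $v=\theta N_k\Psi-u$ fails to be nonnegative in $\mathbb{R}^{n}$, so neither Lemma \ref{Point Estimates 1} nor Theorem \ref{Point Estimates} applies to it. One is forced to truncate, $w=v^{+}$, and then the error $M^{-}w-M^{-}v$ involves $\int \bigl(v^{-}(x+y)+v^{-}(x-y)\bigr)\big/\sum_i|y_i|^{n+\sigma_i}\,dy$, i.e.\ the nonlocal tail of the set where $u$ exceeds the comparison level. This is the bulk of the paper's proof and it is entirely absent from your proposal.

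The paper controls that tail by two devices you do not have: (i) the a priori envelope $u\leq f_{t}$, which bounds $u$ at every scale around $x_{0}$ simultaneously (this is exactly why the touching-function formulation is used instead of a raw growing lemma in the nonlocal setting); and (ii) the auxiliary point $x_{1}\in B_{1/4}$ where $u$ touches $\iota(1-|4x|^{2})$ from above, at which $M^{-}u(x_{1})\leq 1$ can be evaluated classically and yields the global integrability
$c_{\sigma}\int_{\mathbb{R}^{n}}\bigl(u(x_{1}+y)-2\bigr)^{+}\big/\sum_{i}|y_i|^{n+\sigma_i}\,dy\leq C$,
which, combined with the anisotropic lower bound $|y_i|\geq \tfrac12(r\theta)^{(n+\sigma_{\max})/(n+\sigma_i)}$ outside the ellipse, gives $M^{-}w\leq C_{1}(\theta r)^{-(n+\sigma_{\max})}$. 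Without an analogue of these two ingredients your growing step does not close, and the subsequent iteration (including the balance between the decay of $r_k$ and the requirement $|E^{j}_{r_k,s}|\gtrsim N_k^{-\varepsilon}$ needed for the final measure contradiction) cannot be carried out. I would recommend abandoning the iteration and adopting the touching-function formulation, where the choice of $\tau=\frac{n(n+\sigma_{\max})}{\varepsilon(n+\sigma_{\min})}$ resolves the scale bookkeeping automatically.
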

\begin{proof}
Without loss of generality, we can suppose that $u\left( 0 \right) \leq 1$ and $C_{0} = 1$. Let 
$$\tau = \frac{n\left( n + \sigma_{\max} \right)}{\varepsilon \left(  n + \sigma_{\min} \right)}, $$ 
where $\varepsilon > 0$ is as in Theorem \ref{Point Estimates 3}. For each $\vartheta >0$, we define the function
$$
f_{\vartheta}\left( x \right) := \vartheta \left( 1 - | x | \right)^{-\tau}, \quad x \in B_{1}. 
$$
Let $t>0$ be such that $u \leq f_{t}$ in $B_{1}$. There is an $x_{0} \in B_{1}$ such that $u\left( x_{0} \right) = f_{t}\left( x_{0} \right)$. Let $d := \left( 1 - | x_{0} | \right)$ be the distance from $x_{0}$ to $\partial B_{1}$. 
  
If $\sigma_{\max}=\sigma_{i_{0}}$ and $E^{\max}_{r, s}\left( x_{0}\right):=E^{i_{0}}_{r, s}\left( x_{0}\right)$, for all $s >0$, we will estimate the portion of the ellipsoid $E^{\max}_{r, 1}\left( x_{0}\right)$ covered by $\left\lbrace u > \frac{u\left( x_{0} \right) }{2}\right\rbrace $ and by $\left\lbrace u < \frac{u\left( x_{0} \right) }{2}\right\rbrace $. As in \cite{CS}, we will prove that $t>0$ cannot be too large. Thus, since $\tau \leq \dfrac{2n}{\varepsilon}$, we conclude the proof of the theorem. By Theorem \ref{Point Estimates 3}, we have
$$
\left | \left\lbrace u > \frac{u\left( x_{0} \right) }{2}\right\rbrace \cap B_{1} \right | \leq C  \left | \dfrac{2}{u\left( x_{0} \right) }\right |^{\varepsilon} = C t^{-\varepsilon}d^{n} \leq C_{1}  t^{-\varepsilon} \left( r^{\frac{n+\sigma_{\max}}{n+\sigma_{\min}}}\right)^{n}  , 
$$
where $r=\frac{d}{2}$. Thus, we get  
\begin{equation}
\label{Harnack Inequality Estimate 1}
\left | \left\lbrace u > \frac{u\left( x_{0} \right) }{2}\right\rbrace \cap E^{\max}_{r, 1}\left(  x_{0} \right)  \right | \leq  C_{1}  t^{-\varepsilon}|E^{\max}_{r, 1}|. 
\end{equation} 
Now we will estimate $\left | \left\lbrace u > \frac{u\left( x_{0} \right) }{2}\right\rbrace \cap E^{\max}_{r, 1}\left( x_{0} \right)  \right |$, where $0 < \theta < 1$. Since 
$$
| x | \leq  | x - x_{0} | + | x_{0} |,  \quad \forall x \in \mathbb{R}^{n},
$$
we have
$$\left( 1 - | x | \right) \geq \left[ d -  \frac{d\theta}{2} \right],$$
for $x \in B_{r \theta }\left( x_{0} \right)$. Hence, if $x \in B_{r \theta }\left( x_{0} \right)$, we get
$$u\left( x \right) \leq \nonumber f_{t}\left( x\right) \leq t \left( 1 - | x | \right)^{-\tau} \leq u\left( x_{0} \right) \left(1 - \frac{\theta}{2} \right)^{-\tau}. $$
Then, since $M^{+}u \geq - 1$, the function
$$v\left( x \right) =  \left( 1 - \frac{\theta}{2} \right)^{-\tau}u\left( x_{0} \right) - u\left( x \right) $$ 
satisfies
$$v \geq 0 \quad \text{in} \ B_{r \theta }\left( x_{0} \right)  \quad \text{and} \quad M^{-}v \leq 1.$$
We will consider the function $w:=v^{+}$. For $x \in \mathbb{R}^{n}$ we have
$$M^{-}w\left( x \right) = M^{-}v\left( x \right) + \left( M^{-}w\left( x \right) - M^{-}v\left( x \right) \right)$$
and
\begin{eqnarray*}
\dfrac{ M^{-}w\left( x \right) - M^{-}v\left( x \right)}{c_{\sigma}} & = &  \lambda \int_{\mathbb{R}^{n}} \dfrac{\delta^{+}\left( w, x, y\right) - \delta^{+}\left( v, x, y\right) }{\sum_{i=1}^{n}|y_{i}|^{n+\sigma_{i}}}dy \\ 
&  & + \Lambda \int_{\mathbb{R}^{n}} \dfrac{\delta^{-}\left( v, x, y\right) - \delta^{-}\left( w, x, y\right) }{\sum_{i=1}^{n}|y_{i}|^{n+\sigma_{i}}}dy \\ 
& = & I_{1} + I_{2},
\end{eqnarray*}
where $I_{1}$ and $I_{2}$ represent the two terms in the right-hand side above. Using the elementary equality
$$v^{+}\left( x + y\right) = v\left( x+y\right) + v^{-}\left( x +y \right),$$
and denoting $\delta_{w}:=\delta\left( w, x, y\right)$ and $\delta_{v}:=\delta\left( v, x, y\right)$, we obtain
$$\delta^{+}_{w} = \delta_{v} + v^{-}\left( x - y \right) + v^{-}\left( x + y \right).$$
Thus, taking in account that 
$$\delta_{w}^{+} \geq \delta_{v}^{+} \quad \text{and} \quad \delta_{v} = \delta_{v}^{+} - \delta_{v}^{-},$$
we estimate
\begin{eqnarray}
\label{Harnack Inequality Estimate 6}
I_{1} & = & - \lambda \int_{\left\lbrace \delta_{w}^{+} > \delta_{v}^{+}\right\rbrace } \dfrac{\delta_{v}^{-}}{\sum_{i=1}^{n}|y_{i}|^{n+\sigma_{i}}}dy  \nonumber\\
& & +  \lambda \int_{\left\lbrace \delta_{w}^{+} > \delta_{v}^{+} \right\rbrace } \dfrac{v^{-}\left( x + y\right) + v^{-}\left( x - y\right) }{\sum_{i=1}^{n}|y_{i}|^{n+\sigma_{i}}}dy
\nonumber  \\
& \leq & \Lambda \int_{\left\lbrace \delta_{w}^{+} > 0 \right\rbrace } \dfrac{v^{-}\left( x + y\right) + v^{-}\left( x - y\right) }{\sum_{i=1}^{n}|y_{i}|^{n+\sigma_{i}}}dy.
\end{eqnarray}
Analogously, we get
\begin{eqnarray}
\label{Harnack Inequality Estimate 7}
I_{2} & = &  \Lambda \int_{\left\lbrace \delta_{v}^{-} > 0\right\rbrace \cap \left\lbrace \delta_{w}^{-} \neq \delta^{-}_{v} \right\rbrace } \dfrac{\delta_{v}^{-} - \delta_{w}^{-}}{\sum_{i=1}^{n}|y_{i}|^{n+\sigma_{i}}}dy \nonumber\\  
&  & + \Lambda \int_{\left\lbrace \delta_{v}^{-} = 0 \right\rbrace \cap \left\lbrace \delta_{w}^{-} \neq \delta^{-}_{v} \right\rbrace} \dfrac{v^{-}\left( x + y\right) + v^{-}\left( x - y\right) }{\sum_{i=1}^{n}|y_{i}|^{n+\sigma_{i}}}dy \nonumber \\ 
& \leq & \Lambda \int_{\left\lbrace \delta_{v}^{-} > 0\right\rbrace \cap \left\lbrace \delta_{w}^{-} \neq \delta^{-}_{v} \right\rbrace } \dfrac{- \delta_{v} - \delta_{v}^{-}}{\sum_{i=1}^{n}|y_{i}|^{n+\sigma_{i}}}dy.
\end{eqnarray}
We also have 
\begin{eqnarray}
\label{Harnack Inequality Estimate 8}
- \delta_{v}^{-} - \delta_{w}^{-} & = & 2v\left( x \right) - \left( v\left( x + y \right) + v\left( x - y \right)\right) - \delta_{w}^{-} \nonumber\\ 
& = & 2v\left( x \right) - \left[ \left( v^{+}\left( x + y \right) + v^{+}\left( x - y \right)\right) \right. \nonumber \\
& & \left. - \left( v^{-}\left( x + y \right) + v^{-}\left( x - y \right)\right) \right] \nonumber\\ 
& = & \left( -\delta_{w} - \delta_{w}^{-} \right) +  v^{-}\left( x + y \right) + v^{-}\left( x - y \right) \nonumber\\ 
& = & -\delta^{+}_{w} + v^{-}\left( x + y \right) + v^{-}\left( x - y \right).
\end{eqnarray}
Then, from \eqref{Harnack Inequality Estimate 8} and \eqref{Harnack Inequality Estimate 7}, we obtain
\begin{eqnarray}
\label{Harnack Inequality Estimate 9}
I_{2} & \leq &  - \Lambda \int_{\left\lbrace \delta_{v}^{-} > 0\right\rbrace \cap \left\lbrace \delta_{w}^{-} \neq \delta^{-}_{v} \right\rbrace } \dfrac{ \delta_{w}^{+}}{\sum_{i=1}^{n}|y_{i}|^{n+\sigma_{i}}} dy \nonumber \\  
&  & + \Lambda \int_{\left\lbrace \delta_{v}^{-} > 0\right\rbrace \cap \left\lbrace \delta_{w}^{-} \neq \delta^{-}_{v} \right\rbrace} \dfrac{v^{-}\left( x + y\right) + v^{-}\left( x - y\right) }{\sum_{i=1}^{n}|y_{i}|^{n+\sigma_{i}}}dy \nonumber\\ 
& \leq & \Lambda \int_{\left\lbrace \delta_{w}^{-} \geq 0 \right\rbrace} \dfrac{v^{-}\left( x + y\right) + v^{-}\left( x - y\right) }{\sum_{i=1}^{n}|y_{i}|^{n+\sigma_{i}}}dy.
\end{eqnarray}
Hence, using \eqref{Harnack Inequality Estimate 6}, \eqref{Harnack Inequality Estimate 9}, and changing variables, we find
\begin{eqnarray*}
\dfrac{ M^{-}w\left( x \right) - M^{-}v\left( x \right)}{c_{\sigma}} & \leq &  \Lambda \int_{\mathbb{R}^{n}} \dfrac{v^{-}\left( x + y\right) + v^{-}\left( x - y\right) }{\sum_{i=1}^{n}|y_{i}|^{n+\sigma_{i}}}dy \\ 
& = & -2 \Lambda \int_{\left\lbrace v\left( x + y \right) < 0 \right\rbrace } \dfrac{v\left( x + y\right)}{\sum_{i=1}^{n}|y_{i}|^{n+\sigma_{i}}}dy.
\end{eqnarray*}
Moreover, if $x \in B_{\frac{r\theta}{2}}\left( x_{0} \right) $, we have
$$\dfrac{M^{-}w\left( x \right) - M^{-}v\left( x \right)}{c_{\sigma}}  \leq   2 \Lambda \int_{\mathbb{R}^{n} \setminus B_{r \theta }\left( x_{0} - x \right)} \dfrac{-v\left( x + y\right)}{\sum_{i=1}^{n}|y_{i}|^{n+\sigma_{i}}}dy$$ 
$$ \leq  2 \Lambda \int_{\mathbb{R}^{n} \setminus B_{r \theta }\left( x_{0} - x \right)} \dfrac{\left( u\left( x + y\right) - \left( 1 - \frac{\theta}{2} \right)^{-\tau}u\left( x_{0} \right)\right)^{+}}{\sum_{i=1}^{n}|y_{i}|^{n+\sigma_{i}}}dy.$$
If $\iota > 0$ is the largest value such that $u\left( x \right) \geq \iota \left( 1 - \vert 4 x \vert^{2} \right)$, then there is a point $x_{1} \in B_{\frac{1}{4}}$ such that $u\left( x_{1} \right) = \left( 1 - \vert 4 x_{1} \vert^{2} \right)$. Moreover, since $u\left( 0 \right)\leq 1$, we get $\iota \leq 1$. Then, we have 
$$c_{\sigma} \int_{\mathbb{R}^{n}} \dfrac{\delta^{-} \left( u, x_{1}, y\right)}{\sum_{i=1}^{n}|y_{i}|^{n+\sigma_{i}}}dy \leq c_{\sigma} \int_{\mathbb{R}^{n}} \dfrac{\delta^{-} \left( \left( 1 - \vert 4 x \vert^{2} \right), x_{1}, y\right)}{\sum_{i=1}^{n}|y_{i}|^{n+\sigma_{i}}}dy \leq C,$$
where the constant $C>0$ is independent of $\sigma_{i}$. Moreover, since $M^{-}u\left( x_{1}\right) \leq 1 $, we find
$$c_{\sigma} \int_{\mathbb{R}^{n}} \dfrac{\delta^{+} \left( u, x_{1}, y\right)}{\sum_{i=1}^{n}|y_{i}|^{n+\sigma_{i}}}dy \leq  C.$$
Recall that $u\left( x_{1} - y \right)\geq 0$ and $u\left( x_{1} \right)\leq 1$. Thus, we obtain
$$c_{\sigma} \int_{\mathbb{R}^{n}} \dfrac{\left(  u\left( x_{1} + y \right) - 2 \right)^{+} }{\sum_{i=1}^{n}|y_{i}|^{n+\sigma_{i}}}dy \leq  C.$$
Since $t>0$ is large enough, we can suppose that $ u\left( x_{0} \right) > 2$. Let 
$$x \in E^{\max}_{  \frac{r\theta}{2},1}\left( x_{0} \right)\subset B_{\frac{r \theta}{2} }\left( x_{0} \right)$$ 
and 
$$y \in \mathbb{R}^{n} \setminus B_{r \theta }\left( x_{0} - x \right) \subset \mathbb{R}^{n} \setminus E^{\max}_{ \frac{r\theta}{2},1} \left( x_{0} - x \right).$$
Then, we have the inequalities 
\begin{eqnarray*}
& & \sum_{i=1}^{n}|\left( y + x + x_{1} \right)_{i}|^{n+\sigma_{i}} \\
& \leq & C \left(  \sum_{i=1}^{n}|y_{i}|^{n+\sigma_{i}} + \sum_{i=1}^{n}|x_{i}|^{n+\sigma_{i}} + \sum_{i=1}^{n}|\left( x_{1} \right)_{i}|^{n+\sigma_{i}} \right)\\
& \leq & C \sum_{i=1}^{n}|y_{i}|^{n+\sigma_{i}} + 2C
\end{eqnarray*}
and 
\begin{eqnarray*}
| y_{i}| & \geq & | \left( y - \left( x_{0} - x \right)\right)_{i} | - | \left( x_{0} - x \right)_{i} |\\ 
& \geq & \frac{\left( r\theta\right)^{\frac{n + \sigma_{\max}}{n + \sigma_{i}}}}{2}. 
\end{eqnarray*}    
Then, taking into account the obvious equalities
$$u\left( x + y\right) - \left( 1 - \frac{\theta}{2} \right)^{-\tau}u\left( x_{0} \right) =  u\left( x + x_{1} + y - x_{1} \right) - \left( 1 - \frac{\theta}{2} \right)^{-\tau}u\left( x_{0} \right),$$
and
$$ \dfrac{1}{\sum_{i=1}^{n}|y_{i}|^{n+\sigma_{i}}} = \left( \sum_{i=1}^{n}|\left( y + x + x_{1} \right)_{i}|^{n+\sigma_{i}}\right)^{-1} \dfrac{\sum_{i=1}^{n}|\left( y + x + x_{1} \right)_{i}|^{n+\sigma_{i}}}{\sum_{i=1}^{n}|y_{i}|^{n+\sigma_{i}}}, $$
we estimate
$$2 \Lambda \int_{\mathbb{R}^{n} \setminus B_{r \theta }\left( x_{0} - x \right)} \dfrac{\left( u\left( x + y\right) - \left( 1 - \frac{\theta}{2} \right)^{-\tau}u\left( x_{0} \right)\right)^{+}}{\sum_{i=1}^{n}|y_{i}|^{n+\sigma_{i}}}dy  \leq  C_{1}\left( \theta r \right) ^{-\left( n + \sigma_{\max} \right) }.$$
Thus, we have
$$M^{-}w \leq C_{1}\left( \theta r \right) ^{-\left( n + \sigma_{\max} \right) } \quad \text{in} \ E^{\max}_{  \frac{r\theta}{2},1}\left( x_{0} \right).$$
Applying Theorem \ref{Point Estimates} to $w$ in $E^{\max}_{  \frac{r\theta}{2},1}\left( x_{0} \right) \subset B_{\frac{r \theta }{2}}\left( x_{0} - x \right)$ and using that 
$$w\left( x_{0}\right) =  \left( \left( 1 - \frac{\theta}{2} \right)^{-\tau} - 1 \right) u\left( x_{0} \right),$$ 
we get
\begin{eqnarray}
\label{Harnack Inequality Estimate 16}
& & \left | \left\lbrace u >  \dfrac{u\left( x_{0} \right)}{2} \right\rbrace \cap E^{\max}_{  \frac{r\theta}{2},\frac{1}{2}} \right | \nonumber \\
& =  &   \left | \left\lbrace w > \left[ \left( 1 - \frac{\theta}{2} \right)^{-\tau} - \dfrac{1}{2} \right] u\left( x_{0} \right) \right\rbrace \cap E^{\max}_{  \frac{r\theta}{2},\frac{1}{2}} \right | \nonumber\\ 
& \leq &  C \left | E^{\max}_{  \frac{r\theta}{2},\frac{1}{2}} \right |  \left[ \left( \left( 1 - \frac{\theta}{2} \right)^{-\tau} - \dfrac{1}{2} \right) u\left( x_{0} \right) + C_{1} \left( r\theta \right)^{- n -C_{2}} \right]^{\varepsilon} \nonumber\\ 
& & \cdot  \left[ \left( \left( 1 - \frac{\theta}{2} \right)^{-\tau} - \dfrac{1}{2} \right) u\left( x_{0} \right)\right]^{-\varepsilon}\nonumber \\ 
& \leq & \nonumber C \left | E^{\max}_{  \frac{r\theta}{2},\frac{1}{2}} \right |  \left[ \left( \left( 1 - \frac{\theta}{2} \right)^{-\tau} - \dfrac{1}{2} \right) u\left( x_{0} \right) + C_{1}\left( r\theta \right)^{- C\left( n \right) } \right]^{\varepsilon}\nonumber \\ 
& & \cdot  \left[ \left( \left( 1 - \frac{\theta}{2} \right)^{-\tau} - \dfrac{1}{2} \right) u\left( x_{0} \right)\right]^{-\varepsilon},
\end{eqnarray}
where 
$$C_{2}=\left[ \sum_{i=1}^{n-1}\frac{n + \sigma_{\max}}{n + \sigma_{i}} - \left( n-1 \right)\right]$$ 
and where we have used that $0 < C_{2} \leq C_{1}\left( n \right)$.
Thus, using \eqref{Harnack Inequality Estimate 16} and the elementary inequalities
$$ \left[ \left( \left( 1 - \frac{\theta}{2} \right)^{-\tau} - \dfrac{1}{2} \right) u\left( x_{0} \right) + C_{1}\left( r\theta \right)^{ -C\left( n \right)} \right]^{\varepsilon}$$ 
$$\leq \left( \left( 1 - \frac{\theta}{2} \right)^{-\tau} - \dfrac{1}{2} \right)^{\varepsilon} u\left( x_{0} \right)^{\varepsilon} +  C_{1} \left( r\theta \right)^{- C\left( n \right) \varepsilon}$$
and
$$\left( 1 - \frac{\theta}{2} \right)^{-\tau} - \frac{1}{2} \geq \left( 1 - \frac{\theta}{2} \right)^{-\frac{n}{\varepsilon}} - \frac{1}{2} \geq \frac{1}{2},$$
for $\theta > 0$ sufficiently small, and yet
$$C_{3} \theta^{-C\left( n \right)\varepsilon} r^{-C\left( n \right)\varepsilon}u\left( x_{0} \right)^{-\varepsilon} \left(  \left( 1 - \frac{\theta}{2} \right)^{-\tau} - \frac{1}{2}\right)^{-\varepsilon} $$
$$ \leq  \nonumber C_{4} \theta^{-C\left( n \right)\varepsilon} r^{-C\left( n \right)\varepsilon}u\left( x_{0} \right)^{-\varepsilon} \leq C_{5}\theta^{-C\left( n \right)\varepsilon} t^{-\varepsilon} d^{n\left[ 1 - \tilde{C} \varepsilon \right]}  \leq  C_{6}\theta^{-C\varepsilon} t^{-\varepsilon}, $$
we obtain
$$\left | \left\lbrace u >  \dfrac{u\left( x_{0} \right)}{2} \right\rbrace \cap E^{\max}_{  \frac{r\theta}{2},\frac{1}{2}} \right | \leq C \left | E^{\max}_{  \frac{r\theta}{2},\frac{1}{2}} \right| \left[ \left( \left( 1 - \frac{\theta}{2} \right)^{-\tau} - 1 \right)^{\varepsilon} + \theta^{-C\varepsilon} t^{-\varepsilon} \right]. $$
Now we choose $\theta > 0$ sufficiently small such that
\begin{eqnarray*}
C \left |E^{\max}_{  \frac{r\theta}{2},\frac{1}{2}} \right | \left[ \left( 1 - \frac{\theta}{2} \right)^{-\tau} - 1 \right]^{\varepsilon} & \leq & C \left |E^{\max}_{  \frac{r\theta}{2},\frac{1}{2}} \right | \left[ \left( 1 - \frac{\theta}{2} \right)^{-\frac{2n}{\varepsilon}} - 1 \right]^{\varepsilon}\\
&  \leq & \dfrac{1}{4} \left |E^{\max}_{  \frac{r\theta}{2},\frac{1}{2}} \right |.
\end{eqnarray*}
Having fixed $\theta > 0$ (independently of $t$), we take $t>0$ sufficiently large such that
$$C \left |E^{\max}_{\frac{r \theta }{2},\frac{1}{2}} \right | \theta^{-C \varepsilon} t^{-\varepsilon} \leq \dfrac{1}{4} \left |E^{\max}_{\frac{r \theta }{2},\frac{1}{2}} \right |.$$
Then, using \eqref{Harnack Inequality Estimate 16}, we find
$$ \left | \left\lbrace u >  \dfrac{u\left( x_{0} \right)}{2} \right\rbrace \cap E^{\max}_{  \frac{r\theta}{2},\frac{1}{2}} \right |  \leq  \dfrac{1}{4} \left |E^{\max}_{  \frac{r\theta}{2},\frac{1}{2}} \right |.$$
Hence, we have, for $t > 0$ large,
\begin{eqnarray*} 
\left | \left\lbrace u <  \dfrac{u\left( x_{0} \right)}{2} \right\rbrace \cap E^{\max}_{  \frac{r\theta}{2},\frac{1}{2}} \right |  & \geq &  c \theta^ { 1 +  \sum_{i=1}^{n-1}\frac{n + \sigma_{\max}}{n + \sigma_{i}}} \left |E^{\max}_{  r,1} \right | \\& \geq &  c_{2} \left |E^{\max}_{ r,1} \right |,
\end{eqnarray*}
which is a contradiction to \eqref{Harnack Inequality Estimate 1}.
\end{proof}

As a consequence of the Harnack inequality we obtain the $C^{\gamma}$ regularity.
  
\begin{theorem}[$C^{\gamma}$ estimates] \label{estimates}
\label{ estimates}
Let $u$ be a bounded function such that
$$ M^{-} u \leq C_{0} \quad \text{and} \quad M^{+} u \geq - C_{0}  \ \text{in}  \ B_{1}.$$
If $(0,2) \ni \sigma_{0} < \sigma_{\min}$, then there is a positive constant $0 < \gamma < 1$, that depends only $n$, $\lambda$, $\Lambda$ and $\sigma_{0}$, such that $u \in C^{\gamma}\left( B_{1/2}\right)$ and 
$$| u |_{C^{\gamma}\left( B_{1/2}\right)} \leq C \left( \sup \limits_{\mathbb{R}^{n}}| u | + C_{0}  \right),$$
for some constant $C>0$.
\end{theorem}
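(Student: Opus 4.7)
By translation invariance of the class of operators, it suffices to prove $C^\gamma$ regularity of $u$ at the origin with a universal bound; the full estimate on $B_{1/2}$ follows by a standard covering argument applied around each point. After normalizing by $\sup_{\mathbb R^n}|u|+C_0$, we may assume $\sup_{\mathbb R^n}|u|\leq 1/2$ and that $C_0$ is as small as needed below. The plan is the classical oscillation-decay iteration driven by the Harnack inequality (Theorem \ref{Harnack Inequality}): construct monotone sequences $\{m_k\}$ and $\{M_k\}$, for some universal $\mu\in(0,1)$ and $\gamma\in(0,\sigma_0)$, satisfying
\begin{equation*}
m_k \leq u \leq M_k \quad \text{in } B_{\mu^k}, \qquad M_k-m_k = 2\mu^{\gamma k}.
\end{equation*}
Sending $k\to\infty$ then yields Hölder continuity at the origin with exponent $\gamma$ and the required estimate.

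For the inductive step, given the sequences up to level $k$, rescale
\begin{equation*}
v(x) := \mu^{-\gamma k}\bigl(u(\mu^k x)-\tfrac12(M_k+m_k)\bigr),
\end{equation*}
so that $|v|\leq 1$ on $B_1$ and, propagating the induction hypothesis through all previous scales, $|v(x)|\leq 2|x|^{\gamma}$ on $\mathbb R^n$. A measure dichotomy on $B_{1/2}$ puts us in one of two symmetric situations; assume $|\{v\leq 0\}\cap B_{1/2}|\geq\tfrac12|B_{1/2}|$ and consider the nonnegative function $w:=(1+v)^+$, which agrees with $1+v$ on $B_1$. Using Lemma \ref{Comp. princ.} together with the explicit form of the extremal operators, one verifies that $w$ satisfies $M^-w\leq C_1$ and $M^+w\geq -C_1$ in the viscosity sense in $B_2$, with a universal $C_1$, the contribution from $\{v<-1\}$ being absorbed into the right-hand side as a tail integral. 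The Harnack inequality then gives $\sup_{B_{1/2}}w\leq C(w(0)+C_1)$, and the $L^\varepsilon$-decay (Theorem \ref{Point Estimates 3}) upgrades this to the pointwise lower bound $w\geq\theta$ on $B_\mu$ for some universal $\theta>0$. Back in terms of $u$, this defines $m_{k+1}:=m_k+\theta\mu^{\gamma k}$ on $B_{\mu^{k+1}}$, and $\mu$ is chosen so that $M_k-m_k-\theta\mu^{\gamma k}\leq 2\mu^{\gamma(k+1)}$, closing the induction and fixing $\gamma$.

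The main obstacle is validating the uniform operator inequalities for the truncated $w$, which comes down to controlling
\begin{equation*}
\int_{\mathbb R^n\setminus B_1}\frac{|y|^{\gamma}}{\sum_{i=1}^n|y_i|^{n+\sigma_i}}\,dy
\end{equation*}
uniformly in $(\sigma_1,\dots,\sigma_n)$ with $\sigma_{\min}\geq\sigma_0$. The pointwise bound $\sum_i|y_i|^{n+\sigma_i}\geq c(n)|y|^{n+\sigma_{\min}}$ for $|y|\geq 1$ reduces this to an isotropic integral that converges whenever $\gamma<\sigma_{\min}$, which is ensured by choosing $\gamma<\sigma_0$. The normalization constant $c_\sigma$ prescribed in Lemma \ref{cov. 1 lemma} is exactly what keeps the resulting bound universal in the $\sigma_i$'s, preserving the stability as $\sigma_{\min}\to 2$ that is one of the distinctive features of the nonlocal theory. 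With this tail control in place the induction closes and produces the universal exponent $\gamma=\gamma(n,\lambda,\Lambda,\sigma_0)$ claimed in the statement.
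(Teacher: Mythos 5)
Your overall strategy (diminishing oscillation driven by the weak-Harnack/$L^{\varepsilon}$ estimate) is the right one and is what the paper implicitly invokes, but there is a genuine gap in the rescaling step. You iterate on Euclidean balls $B_{\mu^{k}}$ via the isotropic dilation $v(x)=\mu^{-\gamma k}\bigl(u(\mu^{k}x)-\tfrac12(M_{k}+m_{k})\bigr)$. Under $y\mapsto\mu^{k}y$ the anisotropic kernel transforms as
$$\mu^{-kn}K(\mu^{-k}z)=\mu^{k\sigma_{\min}}\,\frac{c_{\sigma}}{\sum_{i=1}^{n}\mu^{-k(\sigma_{i}-\sigma_{\min})}|z_{i}|^{n+\sigma_{i}}},$$
so the rescaled kernel only satisfies bounds of the form \eqref{Kernel cond 2} with effective ellipticity constants $\bigl[\lambda\,\mu^{k(\sigma_{\max}-\sigma_{\min})},\,\Lambda\bigr]$. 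Unless all $\sigma_{i}$ coincide, the ratio $\Lambda/\lambda$ of the rescaled class degenerates geometrically in $k$, and with it the constants in Theorems \ref{Point Estimates 3} and \ref{Harnack Inequality}; the induction does not close with a universal $\theta$ and $\mu$. This is precisely why the paper sets up the anisotropic dilations $T_{j,r}$ (which preserve the kernel class exactly, with the factor $r^{\sigma_{j}}$ times the Jacobian correction) and states the pointwise estimate, Theorem \ref{Point Estimates}, on the ellipsoids $E^{j}_{r,s}$. The oscillation decay must be run on the nested anisotropic ellipsoids $E^{j}_{r^{k},1}(x_{0})$ rather than on $B_{\mu^{k}}(x_{0})$; one then recovers Euclidean H\"older continuity (with a possibly smaller exponent) because $E^{j}_{r,1}$ is squeezed between Euclidean balls of radii comparable to powers of $r$. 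Your tail computation, including the role of $c_{\sigma}$ and the choice $\gamma<\sigma_{0}$, is correct once rewritten for the anisotropic dilations.

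A secondary point: the measure dichotomy is applied in the wrong direction. From $\left|\{v\le 0\}\cap B_{1/2}\right|\ge\tfrac12\left|B_{1/2}\right|$ you cannot deduce a pointwise lower bound for $w=(1+v)^{+}$; the $L^{\varepsilon}$ estimate bounds the measure of superlevel sets by the value at the center, so it converts ``large on a set of definite measure'' into ``bounded below pointwise.'' Under your assumption the function that is $\ge 1$ on a set of measure $\ge\tfrac12\left|B_{1/2}\right|$ is $1-v$ (suitably truncated), and the conclusion of the step is the improved \emph{upper} bound $v\le 1-\theta$ on the next ellipsoid, i.e. $M_{k+1}=M_{k}-\theta\mu^{\gamma k}$, not the raised lower bound $m_{k+1}=m_{k}+\theta\mu^{\gamma k}$. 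Also note that Lemma \ref{Comp. princ.} compares a subsolution with a supersolution of $I$; to pass from $v$ to its truncation $w$ you need the direct tail computation for $M^{-}w-M^{-}v$ (as carried out inside the proof of Theorem \ref{Harnack Inequality}), not the comparison lemma itself.
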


The next result is a consequence of the arguments used in \cite{CS} and Theorem \ref{estimates}. As in \cite{CS}, if we suppose a modulus of continuity of $K_{\alpha\beta}$ in measure, so as to make sure that faraway oscillations tend to cancel out, we obtain the interior $C^{1, \gamma}$ regularity for solutions of equation $Iu = 0$. 

\begin{theorem}[$C^{1,\gamma}$ estimates]
Suppose that $0 < \sigma_{0} < \sigma_{\min}$. There exists a constant $\tau_{0} > 0$, that depends only on $\lambda$, $\Lambda$, $n$ and $\sigma_{0}$, such that
$$ \int_{\mathbb{R}^{n}\setminus B^{\tau_{0}}}\dfrac{| K_{\alpha\beta}\left( y \right) - K_{\alpha\beta}\left( y - h \right)|}{|h|} dy \leq C_{0}, \quad \ \text{whenever} \ |h| < \frac{\tau_{0}}{2} . $$
If $u$ is a bounded function satisfying $Iu=0$ in $B_{1}$, then there is a constant $0 < \gamma < 1$, that depends only $n$, $\lambda$, $\Lambda$ and $\sigma_{0}$, such that $u \in C^{1,\gamma}\left( B_{1/2}\right)$ and 
$$| u |_{C^{1,\gamma}\left( B_{1/2}\right)} \leq C  \sup \limits_{\mathbb{R}^{n}}| u |,$$
for some constant $C=C\left( n, \lambda, \Lambda, \sigma_{0}, C_{0} \right)>0$.
\end{theorem}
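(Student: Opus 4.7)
The plan is to adapt the iterative approximation strategy used in \cite[Section 13]{CS} to the anisotropic setting, combining the $C^\gamma$ regularity just established in Theorem \ref{estimates} with the newly-assumed translation regularity of the kernels. After translating the base point, it suffices to prove a pointwise $C^{1,\gamma}$ expansion of $u$ at the origin. Without loss of generality, $\|u\|_{L^\infty(\mathbb{R}^n)} \leq 1$.

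The first building block is a translation lemma for first-order increments. Because $K_{\alpha\beta}$ depends only on $y$, the operator $I$ is translation invariant, so that if $Iu=0$ in $B_1$ then $I[u(\cdot + h)] = 0$ in $B_{1-|h|}$ for every small vector $h$. Lemma \ref{Comp. princ.} then yields
$$M^-\bigl[u(\cdot + h)-u(\cdot)\bigr] \leq 0 \leq M^+\bigl[u(\cdot + h)-u(\cdot)\bigr],$$
to which Theorem \ref{estimates} applies, with a $C^\gamma$ bound independent of $h$. Using the anisotropic rescaling $T_{j,r}$ already employed just before Theorem \ref{Point Estimates}, one bootstraps this information, as in \cite[Thm.~13.1]{CS}, from $C^\gamma$ regularity of $u$ to $C^\alpha$ regularity for every $\alpha<1$.

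To push past Lipschitz regularity, I would examine the symmetrised second difference $\delta(u,x,h) := u(x+h)+u(x-h)-2u(x)$. By translation invariance, each individual piece $L_{\alpha\beta} u(x\pm h) - L_{\alpha\beta} u(x)$ can be rewritten via the change of variables $y\mapsto y\mp h$ in the defining integral; the symmetry of $K_{\alpha\beta}$ eliminates the first-order term, and the remaining discrepancy concentrates in the far-field tail integral
$$\biggl|\int_{\mathbb{R}^n\setminus B^{\tau_0}}\bigl[K_{\alpha\beta}(y)-K_{\alpha\beta}(y-h)\bigr]\, u(x+y)\,dy\biggr| \leq C_0\,|h|\,\|u\|_{L^\infty},$$
by the hypothesised kernel smoothness. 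Dividing by $|h|^{1+\gamma'}$ for a suitably chosen $\gamma'>0$ and invoking Lemma \ref{Comp. princ.} once more, the rescaled second difference satisfies viscosity inequalities $M^\pm\leq C$ and $\geq -C$, so that Theorem \ref{estimates} again applies. Iterating on dyadic scales (using the rectangles $R_{r,s}$ from Section \ref{preliminaries} in place of balls to respect the anisotropy), one produces a sequence of affine functions $\ell_k$ with $\sup_{B_{r_k}}|u-\ell_k|\leq C\,r_k^{1+\gamma}$, which is precisely the pointwise $C^{1,\gamma}$ expansion at the origin. Translating the base point throughout $B_{1/2}$ completes the proof.

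The main obstacle is the second step: the tail estimate relies on grouping $L_{\alpha\beta} u(x+h)$ with $L_{\alpha\beta} u(x-h)$ in just the right way to extract the first-order cancellation from the symmetry of $K_{\alpha\beta}$ before invoking the kernel regularity to bound the remaining $O(|h|)$ term. The anisotropy makes this delicate because the natural rescaling acts differently in different coordinate directions, while the kernel regularity hypothesis is phrased in terms of Euclidean balls $B^{\tau_0}$; one must therefore be careful to translate the conclusion of the tail estimate back into the anisotropic framework (via the boxes $R_{r,s}$ and ellipses $E_{r,s}$) before applying the iteration.
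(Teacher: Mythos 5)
Your overall strategy --- translation invariance, incremental quotients, the kernel regularity hypothesis to tame the far field, and iteration of Theorem \ref{estimates} --- is the route the paper intends (the paper gives no details and simply defers to the arguments of Section 13 of \cite{CS} combined with Theorem \ref{estimates}). Your first step is essentially right but omits one point: already in the bootstrap from $C^{\gamma}$ to higher exponents one must work with the quotient $\left(u(\cdot+h)-u\right)/|h|^{\beta}$, which is bounded only near the origin and not in all of $\mathbb{R}^{n}$; the kernel hypothesis is therefore needed at this stage too, to show that the part of the quotient supported outside a fixed ball contributes only a bounded term (of order $C_{0}|h|^{1-\beta}\sup|u|$, bounded precisely because $\beta\leq 1$) to the extremal inequalities before Theorem \ref{estimates} can be applied.

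The genuine gap is in your second step. First, the symmetrised second difference $u(\cdot+h)+u(\cdot-h)-2u$ is the sum of the two first differences $v_{\pm}=u(\cdot\pm h)-u$, each of which satisfies $M^{+}v_{\pm}\geq 0\geq M^{-}v_{\pm}$; but Lemma \ref{Comp. princ.} only controls the \emph{difference} of a subsolution and a supersolution, and the sub/superadditivity of the extremal operators gives $M^{+}(v_{+}+v_{-})\geq M^{-}v_{+}+M^{+}v_{-}$, whose right-hand side has no sign. Since $I$ is an inf-sup and hence neither convex nor concave, the second difference of a viscosity solution does not inherit two-sided extremal inequalities, and the phrase ``each individual piece $L_{\alpha\beta}u(x\pm h)-L_{\alpha\beta}u(x)$'' has no content since $u$ solves $Iu=0$, not $L_{\alpha\beta}u=0$. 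This is exactly why \cite{CS} stays with first-order quotients $\left(u(\cdot+h)-u\right)/|h|^{\beta}$, $\beta\leq 1$, throughout, and converts a uniform $C^{\gamma}$ bound on a quotient of order $\beta$ with $\beta+\gamma>1$ into $C^{1,\beta+\gamma-1}$ regularity via the incremental-quotient lemma. Second, even granting the viscosity inequalities, the hypothesis yields only an $O(|h|)$ bound on the far-field discrepancy, so after dividing by $|h|^{1+\gamma'}$ the resulting right-hand side is of order $|h|^{-\gamma'}$ and blows up as $h\to 0$; the iteration must never normalise by more than $|h|$. Replacing your second step by the finite iteration of first-order quotients (carried out in the anisotropic boxes, as you correctly anticipate) repairs the argument.
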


\begin{remark} We can also get $C^{\gamma}$ and $C^{1, \gamma}$ estimates for truncated kernels, i.e., kernels that satisfy \eqref{Kernel cond 2} only in a neighborhood of the origin. Let $\mathcal{L}$ be the class of operators $L_{\alpha\beta}$ such that the corresponding kernels $K_{\alpha\beta}$ have the form
$$K_{\alpha\beta}\left( y \right) = K_{\alpha\beta, 1}\left( y \right) + K_{\alpha\beta, 2}\left( y \right) \geq 0,$$
 where
$$\dfrac{\lambda c_{\sigma} }{\sum_{i=1}^{n}|y_{i}|^{n+\sigma_{i}}} \leq K_{\alpha\beta, 1}\left( y \right) \leq      \dfrac{\Lambda c_{\sigma}}{\sum_{i=1}^{n}|y_{i}|^{n+\sigma_{i}}}$$
and $K_{\alpha\beta, 2} \in L^{1}\left( \mathbb{R}^{n}\right)$ with $\Vert K_{\alpha\beta, 2} \Vert_{L^{1}\left( \mathbb{R}^{n}\right)}\leq c_{0}$, for some constant $c_{0}>0$. The class $\mathcal{L}$ is larger than $\mathcal{L}_{0}$ but the extremal operators $M_{\mathcal{L}}^{-}$ and $M_{\mathcal{L}}^{+}$ are controlled by $M^{+}$ and $M^{-}$ plus the $L^{\infty}$ norm of $u$ (see Lemma 14.1 and Corollary 14.2 in \cite{CS}). Thus the interior $C^{\gamma}$ and $C^{1,\gamma}$ regularity follow.
\end{remark}

\bigskip

\noindent \textbf{Acknowledgements.} This work was done in the framework of the UT Austin$|$Portugal program CoLab. LC supported by NSF. RL supported by CNPq-Brasil. RL and JMU partially supported by the Centro de Ma\-te\-m\'a\-ti\-ca da Universidade de Coimbra (CMUC), funded by the European Regional Development Fund through the program COMPETE and by the Portuguese Government through the FCT - Funda\c c\~ao para a Ci\^encia e a Tecnologia under the project PEst-C/MAT/UI0324/2011, and FCT project UTAustin/MAT/0035/2008. JMU partially supported by FCT projects UTA-CMU/MAT/0007/2009 and PTDC/MAT-CAL/0749/2012 and FCT grant SFRH/BSAB/1273/2012.

\end{document}